\begin{document}
\newcommand{\comments}[1]{\marginpar{\footnotesize #1}} %Comment
%\newcommand{\comments}[1]{}
% % %
\newtheorem{innercustomgeneric}{\customgenericname}
\providecommand{\customgenericname}{}
\newcommand{\newcustomtheorem}[2]{%
  \newenvironment{#1}[1]
  {%
   \renewcommand\customgenericname{#2}%
   \renewcommand\theinnercustomgeneric{##1}%
   \innercustomgeneric
  }
  {\endinnercustomgeneric}
}
\newcustomtheorem{customthm}{Theorem}
\newcustomtheorem{customconj}{Conjecture}
\newcustomtheorem{customprop}{Proposition}
\newcustomtheorem{customlemma}{Lemma}
%\newcustomtheorem{customcor}{Corollary}
% % %

\newtheorem{proposition}{Proposition}[section]
\newtheorem{lemma}[proposition]{Lemma}
\newtheorem{sublemma}[proposition]{Sublemma}
\newtheorem{theorem}[proposition]{Theorem}
\newtheorem{conj}[proposition]{Conjecture}

\newtheorem{maintheorem}{Main Theorem}
\newtheorem{corollary}[proposition]{Corollary}

\newtheorem{ex}[proposition]{Example}

\theoremstyle{remark}

\newtheorem{remark}[proposition]{Remark}
\theoremstyle{definition}
\newtheorem{definition}[proposition]{Definition}

\newcommand{\ovB}{\bar{B}}
\def\Erg{\mathrm{Erg}\, }
\def\cone{\mathbf{C}} % Cone
\def\real{\mathbb{R}}
\def\sphere{\mathbf{S}^{d-1}}% Sphere
\def\integer{\mathbb{Z}}
\def\complex{\mathbb{C}}
\def\BBB{\mathbb{B}}
\def\supp{\mathrm{supp}}
\def\var{\mathrm{var}}
\def\sgn{\mathrm{sgn}}
\def\sp{\mathrm{sp}}
\def\TSR{\mathrm{TSR}}
\def\MT{\mathrm{MT}}
\def\id{\mathrm{id}}
\def\intt{\mathrm{int}}
\def\Imm{\mathrm{Image}}
\def\cc{\Subset}
\def\D{\mathrm {d}}
\def\I{i}
\def\E{e}
\def\Lip{\mathrm{Lip}}
\def\AA{\mathcal{A}}
\def\BB{\mathcal{B}}
\def\CC{\mathcal{C}}
\def\DD{\mathcal{D}}
\def\EE{\mathcal{Z}}
\def\FF{\mathcal{F}}
\def\GG{\mathcal{G}}
\def\HH{\mathcal{H}}
\def\II{\mathcal{I}}
\def\JJ{\mathcal{J}}
\def\KK{\mathcal{K}}
\def\LL{\mathcal{L}}
\def\LLL{\mathbb{L}}
\def\MM{\mathcal{M}}
\def\NN{\mathcal{N}}
\def \OO{\mathcal {O}}
\def \PP{\mathcal {P}}
\def \QQ{\mathcal {Q}}
\def \RR{\mathcal {R}}
\def\SS{\mathcal{S}}
\def\TT{\mathcal{T}}
\def\UU{\mathcal{U}}
\def\VV{\mathcal{V}}
\def\WW{\mathcal{W}}
\def\XX{\mathcal{X}}
\def\YY{\mathcal{Y}}
\def\ZZ{\mathcal{Z}}
\def\FFF{\mathbb{F}}
\def\PPP{\mathbb{P}}

\title[Fractional susceptibility function]{Fractional susceptibility functions for  the quadratic family: \\
Misiurewicz--Thurston parameters}

\date{March 22, 2022}

\author{Viviane Baladi and Daniel Smania}
\address{Laboratoire de Probabilit\'es, Statistique et Mod\'elisation (LPSM), CNRS, 
	 Sorbonne Universit\'e, Universit\'e
	de Paris, 4 Place Jussieu, 75005 Paris, France}
\email{baladi@lpsm.paris}

\address{Depto de Matem\'atica \\
   ICMC/USP,
               CP 668 \\ S\~ao Carlos-SP \\ CEP 13560-970 \\ Brazil}
\email{smania@icmc.usp.br} 

\thanks{
DS was partially supported by CNPq 306622/2019-0, CNPq 430351/2018-6 and FAPESP
Projeto Tem\'atico 2017/06463-3. We are grateful to the Brazilian-French Network in Mathematics for supporting VB's visit to DS in S\~ao Carlos  in 2015 and DS's visit to VB in Paris in  2017. This work was started when VB was working at IMJ-PRG. VB is grateful
to the Knut and Alice Wallenberg Foundation for invitations to Lund University 
in 2018, 2019, and 2020. The visit of VB to S\~ao Carlos in 2019 was supported by FAPESP Projeto Te\'matico 2017/06463-3. The visits of DS to Paris in 2018 and 2019 and VB's research are supported
by the European Research Council (ERC) under the European Union's Horizon 2020 research and innovation programme (grant agreement No 787304).
 We thank Magnus Aspenberg,   Genadi Levin, Tomas Persson, and Julien Sedro for
useful comments. We are very grateful to Clodoaldo Ragazzo for pointing out to us 
that Abel was the first to notice Lemma~\ref{halff+}.}

\begin{abstract} 
For $f_t(x)= t-x^2$   the quadratic family, we define the fractional    susceptibility
function $\Psi^\Omega_{\phi,t_0}(\eta, z)$ of $f_t$,  associated to a $C^1$
observable $\phi$ at a stochastic parameter ${t_0}$.
We  also define an approximate, ``frozen,'' fractional susceptibility function $\Psi^\mathrm{fr}_{\phi,t_0}(\eta, z)$  such that
$\lim_{\eta \to 1} \Psi^{\mathrm{fr}}_{\phi,t_0}(\eta, z)$ is the susceptibility function $\Psi_{\phi,t_0}(z)$ studied by Ruelle.
If ${t_0}$ is Misiurewicz--Thurston, we show that  $\Psi^\mathrm{fr}_{\phi,t_0}(1/2, z)$ has a pole at $z=1$ for generic
$\phi$  if 
$\JJ_{1/2}(t_0)\ne 0$, where  $\JJ_\eta(t)=\sum_{k=0}^\infty   \sgn(Df^k_{t}(c_1))|D f^k_{t}(c_1)|^{-\eta}$, with $c_1=t$ the critical value of $f_{t}$.
We introduce  ``Whitney'' fractional integrals
$I^{\eta,\Omega}$ and derivatives $M^{\eta,\Omega}$ on suitable sets $\Omega$.
We formulate conjectures on $\Psi^\Omega_{\phi,t_0}(\eta, z)$ and 	$\JJ_\eta(t)$, supported by our results on  $M^{\eta,\Omega}$ and  $\Psi^{\mathrm{fr}}_{\phi,t_0}(1/2, z)$, for the former, and numerical experiments, for the latter.
In particular, we expect that 	
$\Psi^\Omega_{\phi,t_0}(1/2, z)$ is singular at $z=1$ for
Collet--Eckmann  ${t_0}$ and  generic $\phi$.
 
 We view this work as a step towards the resolution
of the paradox that $\Psi_{\phi,t_0}(z)$ is holomorphic at $z=1$ for 
Misiurewicz--Thurston $f_{t_0}$ \cite{Ru0, JR}, despite lack of linear
response \cite{BBS}.
\end{abstract}

%%%%%%%%%%%%%%%%%%%%%

\maketitle
\tableofcontents

\section{Introduction}

For real\footnote{The map for $t=2$ is the full parabola $2-x^2$ on $[-2, 2]$, which can only
be perturbed by taking $t<2$. For $t=1$, we get a half-parabola on $[0, 1]$.}
 parameters $t\in (1,2)$, we consider the quadratic family
$$
f_t(x)=t-x^2 \, , \,\, \, \, x \in [-2,2] \, .
$$ 
 The critical point  is $c=c_{0,t}=0$, the critical value is
$c_{1,t}=t<|a_t|$ where $a_t:=\frac{-1-\sqrt{1+4t}}{2}\in (-2,0) $
satisfies   $f_t(a_t)=a_t=f_t(-a_t)$. More generally, we denote the postcritical points 
by $c_{k,t}=f^k_t(c)$ for $k\ge 0$.

We are  interested in the set $\SS$ of (so-called stochastic) parameters $t$ for which $f_t$ admits
an absolutely continuous invariant probability measure $\mu_t=\rho_t dm$. 
The set $\SS$ contains   the Collet--Eckmann (CE) parameters $t$, i.e.
those $t$ such that there exist
$\lambda_c >1$ and $K_0 \ge 1$ with
\begin{equation}\label{00}
|D f_t^k (c_{1,t}) |\ge  \lambda_c ^k\, , 
\quad \forall k \ge K_0 \, .
\end{equation}

Linear response is the study of differentiability of the map $t\mapsto \mu_t$, on suitable
subsets of $\SS$, in a suitable topology in the image, viewing $\mu_t$ as
a Radon measure or a distribution of higher order by introducing
smooth observables $\phi$.
In the simpler setting of families
$t\mapsto F_t$ of smooth expanding (or mixing smooth hyperbolic) maps
with $\partial_t F_t=X_t\circ F_t$,
the map
$$t\mapsto \RR_\phi(t):=\int \phi \,  \D\mu_t$$ 
is differentiable, and the derivative
$\partial _s \RR_\phi(s)$ at $s=t$ is, by
\cite{Ru00}, the value at $z=1$ of the susceptibility function, which is
the power series  (see also   \cite[\S 1]{ABLP})
\begin{equation*}
\Psi_\phi(z):=\Psi_{\phi,t}(z)=
\sum_{k=0}^\infty z^k  \int    (\phi\circ  F_t^k)' \, (X_t \rho_t) \, \D m \, .
\end{equation*}

Returning to the quadratic family $f_t$,
it is well known since the work of Thunberg \cite{Thun}
(see \cite[Theorem 1.30]{DT} for a more recent statement)
that\footnote{As a Radon measure, say --- using distributions
of higher order does not help.} $t\mapsto \mu_t$ is severely discontinuous if one does
not restrict to  Collet--Eckmann parameters with bounded constants.
However, this map is continuous when restricted to a suitable
(large) set of good parameters \cite{Tsu1}. More recently,
\cite[Cor 1.6]{BBS} showed that at almost every Collet--Eckmann 
parameter $t$, and for every $1/2$ H\"older observable
$\phi$, the function $\RR_\phi(s)$ is $\eta$-H\"older
for all $\eta <1/2$ at $s=t$, in the sense of Whitney,
on a set $\Omega_{<1/2}=\Omega_{<1/2}(t)$ of Collet--Eckmann parameters having $t$ as a density point.

One of the purposes of the present work is to reconcile two apparently contradictory results:
In 2005, Ruelle \cite{Ru0} considered the  full unimodal map $f_t$ 
 (and more generally,  Chebyshev polynomials $f_t$ of degree $D\ge 2$). He
  showed that the susceptibility function  (note that
$X_t:=\partial_t f_t 
\circ f_t^{-1} \equiv 1
$ for the quadratic family: Appendix~\ref{vanX} discusses
the condition that $X_t$ vanishes at endpoints)
  \begin{equation}\label{Rudef}
\Psi_\phi(z)=\Psi_{\phi,t}(z)=
  \sum_{k=0}^\infty z^k  \int    (\phi\circ  f_t^k)' \,  \rho_t \, \D m
\end{equation}
 admits a
meromorphic  extension to $\complex$. Ruelle also obtained
the remarkable fact that the residue of the
possible pole at $z=1$ vanishes
(for all observables $\phi \in C^1$).
Soon thereafter, with Jiang \cite{JR},  they generalised this result to the set $\MT$ of Misiurewicz--Thurston  parameters,
i.e., those $t$ for which there exist $L\ge 1$ 
and $P\ge 1$ with $y=f_t^L(c)$  periodic of minimal period $P$, with
$|D f^P_t (y)|>1$.
This raised the hope that $s\mapsto \RR_\phi(s):=\int \phi(x) \rho_s(x) \, \D m$ could be differentiable (in the sense
of Whitney, on an appropriate subset of $\SS$)
at $t\in \MT$, with $\partial_s \RR_\phi(s)|_{s=t}=\Psi_{\phi}(1)$.
In\footnote{In the decade between 2005 and 2015, the hope that $\RR_\phi(s)$ could be differentiable in the sense of Whitney
had already been diminished by the papers \cite{BS0} and \cite{Ru}.}
 2015, however, with Benedicks and Schnellmann \cite{BBS}, one of us showed
that for 
any mixing $t\in \MT$, there exist $\phi \in C^{\infty}$,  
 and  a set $\Omega_{1/2}=\Omega_{1/2}(t)\subset \SS$ containing $t$ as an accumulation
point such that 
\begin{equation}\label{LU}
	0 <\liminf_{\substack{\delta \to 0\\ t+\delta \in \Omega_{1/2}}}\frac{|\RR_\phi(t+\delta)-\RR_\phi(t)|}{ \sqrt {|\delta|}}\le \limsup_{\substack{\delta \to 0\\ t+\delta \in \Omega_{1/2}}}\frac{|\RR_\phi(t+\delta)-\RR_\phi(t)|}{ \sqrt {|\delta|}}< \infty \, .
\end{equation}
A hard open question is whether $t$ is a Lebesgue density
point of $\Omega_{1/2}$: In the affirmative, \eqref{LU} would not be
compatible with Whitney-differentiability  of $\RR_\phi(t)$ at $t$
in any natural sense, a strict paradox.
Otherwise, the bounds \eqref{LU}, may be compatible with differentiability in the sense
of Whitney,  although this would still be counter-intuitive.

\smallskip

Aiming to shed\footnote{Another goal is to give
a probabilistic analysis (analogous to  the central limit theorem of
de Lima--Smania \cite{dLS} in the piecewise expanding setting) of the breakdown of
$C^{1/2}$ regularity of the acim in transversal families of smooth 
unimodal maps with a quadratic critical point.} 
some light on this puzzling state of affairs, we introduce 
below, for $\Re \eta \in (0,1)$ and an appropriate positive measure set
$\Omega\subset \SS$, a two-variable fractional susceptibility function $\Psi^\Omega_{\phi,t}(\eta,z)$
in \S\ref{ladeff}.
The idea is to replace ordinary derivatives by fractional derivatives (Marchaud derivatives are convenient, in particular because they vanish on constants).
The main hurdle is that $\Omega$ has positive measure
but does not contain any nontrivial interval:
Despite the vast existing literature on fractional derivatives, we
did not find any suitable notion of fractional derivatives on such sets
(we propose a definition $M^{\eta,\Omega}$ in Section~\ref{Whitneyfrac}).
In Conjectures~\ref{laconj} and \ref{laconj+}, we formulate 
expected properties of $\Psi^\Omega_{\phi,t}(\eta,z)$.
We also introduce in \S\ref{ladeff} an approximate,  ``frozen'' fractional susceptibility function $\Psi^{\mathrm{fr}}_{\phi,t}(\eta,z)$, where the dynamics is frozen at
a parameter $t$ (so that $\Omega$ does not appear and ordinary Marchaud
derivatives can be used), and we study its properties in Theorem~\ref{WTF}
for $t\in \MT$. (We expect that the techniques of the proof can be extended to TSR parameters
defined in \eqref{sr}, see Remarks \ref{BS12exp} and \ref{CEinfty} and Footnote~\ref{foot0}.)

We next briefly discuss the organisation of the paper and  key points
in the proof of our main rigorous result, Theorem~\ref{WTF}.
Section~\ref{sec2} contains the definitions of the fractional susceptibility
functions. (Another approximate  fractional susceptibility function,
the response function  $\Psi^{\mathrm{rsp}}_{\phi,t}(\eta,z)$, is useful
to prove Theorem~\ref{WTF}.) Sections~\ref{Abel} and~\ref{mainsec} are  devoted to preparatory
material on fractional integrals and derivatives. 
We mention here that the case of piecewise expanding maps
\cite{Ba, BS0, BMS, ABLP} is easier, because the
invariant density appearing there is a sum of a nice function with a countable sum
of Heaviside functions.  For the quadratic maps, the invariant
density \eqref{Ruellerho} involves a sum of quadratic spikes. The fact, used in \cite{Ba, BS0, BMS},
that the derivative of a Heaviside function is a Dirac mass is mirrored in the
present work by Abel's remark that the one-sided half-integral 
of a quadratic spike is a Heaviside,
so that its one-sided Marchaud half derivative is a Dirac mass
(see  Lemmas~\ref{halff+} and~\ref{AbelMarchaud}). However,  {\it one-sided}
derivatives do not seem appropriate to define 
reasonable fractional susceptibility functions. The {\it two-sided} half  integrals, respectively derivatives, of quadratic spikes (Lemmas~\ref{halff} and ~\ref{AbelMarchaud}) involve an additional logarithmic, respectively\footnote{It is
natural that the half derivative of $\mathbf{1}_{x>c_k}(x-c_k)^{-1/2}$  involves $(x-c_k)^{-1}$, but we found no good reference for the computation.} polar, term.
The corresponding ``iterated pole'' is one of the 
features  of Theorem~\ref{WTF} 
in Section~\ref{rigorr}  (see Lemma~\ref{A2}).

An unexpected ingredient
of   Theorem~\ref{WTF} is a new half-transversality condition
$\JJ_{1/2}(t)\ne 0$ (see \eqref{eq.transversal}). Conjecture~\ref{conj2} on sums $\JJ_{\eta}(t)$ in \S\ref{conjB}
 is backed up
by our numerical results in \S\ref{algo'}. 

Finally, in \S\ref{bonusOmega} and \S\ref{five}, we 
introduce and study fractional
Whitney--Riemann--Liouville integrals $I^{\eta,\Omega}$ and Whitney--Marchaud  derivatives
$M^{\eta,\Omega}$ (in particular a ``Whitney version''
of Abel's remark) which support our conjectures on $\Psi^\Omega_\phi(\eta,z)$
and $\Psi^{\mathrm{fr}, \Omega}_\phi(\eta,z)$.
More precisely, as a stepping stone between the frozen function $\Psi^{\mathrm{fr}}_\phi(\eta,z)$ and   $\Psi^\Omega_\phi(\eta,z)$, we introduce
yet another approximate ``semifreddo'' function $\Psi^{\Omega,\mathrm{sf}}_\phi(\eta,z)$
in \S\ref{five}.
We expect that the approximate susceptibility functions $\Psi^{\mathrm{fr}}_\phi(\eta,z)$,  $\Psi^{\mathrm{rsp}}_\phi(\eta,z)$,
and  $\Psi^{\Omega, \mathrm{sf}}_\phi(\eta,z)$  have the same qualitative behaviour as $\Psi^\Omega_\phi(\eta,z)$ (Remark~\ref{conjC}).
Proposition~\ref{propE}, proved in \S\ref{dealing},   shows that the approximate functions $\Psi^{\mathrm{fr}}_\phi(\eta,z)$ and  $\Psi^{\mathrm{rsp}}_\phi(\eta,z)$
   tend to $\Psi_\phi(z)$
as $\eta\to 1$ as formal powers series in $z$ (i.e., convergence of the coefficients of each individual $z^k$).

\bigskip

In the remainder of this Introduction, we flesh out 
the synopsis given above.

\subsection{Conjecture~\ref{laconj} on the fractional susceptibility function
$\Psi^\Omega_{\phi}(\eta, z)$}

We say that a parameter $t\in (1,2)$ is TSR  if  $f_t$ is Collet--Eckmann and satisfies Tsujii's \cite[(WR)]{Tsu} condition, i.e.,
\begin{equation}\label{sr}
\lim_{\eta\rightarrow 0^+} \liminf_{n\rightarrow \infty} 
\frac{1}{n} 
\sum_{\substack{1\leq j\leq n  \\ |f_t^j(c) -c|< \eta }} \ln |f_t'(f_t^j(c))|=0
\, .
\end{equation}
TSR is  implied by polynomial recurrence and implies
Benedicks--Carleson exponential recurrence (see e.g. \cite[Proposition 2.2 and references]{BS2},
also for a topological definition of TSR).
Tsujii constructed in \cite[Theorem 1 (I)]{Tsu}  a positive measure subset  $\Omega\subset \SS$  of 
 TSR parameters  
  such that,
setting $\Omega^c = \real\setminus \Omega$, and letting $m$ denote Lebesgue measure,
	\begin{equation}\label{tscond0}
	\lim_{\delta \to 0}
	\frac{m([t-\delta, t+ \delta] \cap  \Omega^c)}{\delta^\beta} = 0 \, ,
\quad \forall t \in \Omega \, ,
	\end{equation}
for all $\beta <2$ (in particular each $t\in \Omega$ is a Lebesgue density point of
$\Omega$).

\smallskip
The transfer operator associated to  $f_t$ 
is defined on $L^1([-2,2],dm)$ by setting 
\begin{equation*}
\LL_t \varphi(x)=
\sum_{f_{t}(y)=x}\frac{ \varphi(y)}{|Df_{t}(y)|} =\mathbf{1}_{x<t}
\frac{ \varphi(\sqrt{t-x}) + \varphi(-\sqrt{t-x})}{2 \sqrt{t-x}} \, .
\end{equation*}
The dual of $\LL_t$ fixes Lebesgue measure restricted
to $I_t:=  [a_t,-a_t]$ so that $f_t(I_t)\subset (I_t)$.

\smallskip
 For $t\in (1,2)$ a fixed TSR
parameter, it is  convenient to extend $s\mapsto f_s$ as a Lipschitz map
to the whole line as follows: choosing $\epsilon=\epsilon(t)>0$ such that 
$[t-\epsilon,t +\epsilon]\subset ( 1, 2)$, and such that\footnote{Recall that $\supp(\rho_t)=[c_{2,t}, c_{1,t}]$.} $[c_{2,t}, c_{1,t}]
\subset \intt(\cap_{\tau\in [t-\epsilon,t+\epsilon]} I_\tau)=: I_{t,\epsilon}$,
set 
\begin{equation}\label{extendd}
f_\tau=f_{t}\, \mbox{ if } |\tau-t|<\epsilon\, , 
f_\tau=f_{t-\epsilon} \mbox{ for all } 
\tau < t-\epsilon \, , \mbox{ and  } f_\tau=f_{t+\epsilon} \mbox{ for all }
\tau > t+\epsilon
\, .
\end{equation}
Then, for $\Omega\subset \TSR$ having
$t$ as a Lebesgue density point, and $\phi$ a compactly supported $C^1$ function,  the  {\it fractional susceptibility function} $\Psi^\Omega_\phi(\eta,z)=\Psi^\Omega_{\phi,t,\epsilon}(\eta,z)
$ for the quadratic family
at $t$ is the function of two complex variables $\eta$ and $z$ 
$$ 
\Psi^\Omega_\phi(\eta,z)=
\frac{\eta}{2 \Gamma(1-\eta)}
\sum_{k=0}^\infty  z^k  \int
\int_{\delta\in \real \cap (\Omega-t)} \phi ( f_{t+\delta}^k (x)) 
\nonumber  
\frac{(\LL_{t+ \delta}-\LL_{t} )  \rho_{t}(x) }{|\delta|^{1+\eta}} \sgn(\delta) \, \D \delta
\D x \, ,
$$
(writing $\D x=\D m(x)$, $\D \delta=\D m(\delta)$),  in the
sense of formal power series in $z$, for fixed $\eta$
with $\Re \eta \in (0,1)$. (Motivation and details are
given in \S\ref{ladeff}.)

For $\Omega$ satisfying
\eqref{tscond0} for some $\beta>1$, we  define  a ``Whitney--Marchaud'' 
	fractional derivative $M^{\eta,\Omega}$ in \S\ref{five}.
For $\eta\in (0,1)$, Proposition~\ref{natomega} in \S\ref{five} gives  conditions
on $g$ and $\Omega$ ensuring that
\begin{equation*}
		\lim_{\zeta \uparrow \eta}
		\biggl ( \frac{\Gamma(1-\zeta)}{\eta\cdot  \Gamma(\eta-\zeta)}	M^{\zeta,\Omega} g (t) \biggr ) =	\lim_{\delta \to 0, t+\delta \in \Omega}
	\frac{g(t+\delta)-g(t)}{\sgn(\delta) {|\delta|^\eta}}
		\, .
\end{equation*}

\smallskip

We can now state our main conjecture\footnote{The threshold  for $\eta$ below
is $1/2$;  for families with criticality $d$ the
expected  threshold
is  $1/d$.}:

\begin{customconj}{A}
\label{laconj}
For  almost every mixing\footnote{Some results of
\cite{BBS} require polynomial
recurrence. We expect that this is an artefact of the method used
there, but maybe TSR must be strengthened to polynomial recurrence.}  $t\in \TSR$, there exist
$\bar\lambda_t>1$,  $\epsilon>0$,
and a set $\Omega=\Omega(t)\subset \TSR$ containing $t$ and satisfying 
\eqref{tscond0}  for  all
$\beta <2$, such that,
 for any 
compactly supported
$C^1$ function $\phi$, and any $N\ge 1$, the following  holds:

\begin{itemize}

 \item[i.] For any $\eta$ with  $0<\Re \eta < 1/2$, there exists a
 disc\footnote{All discs in the present work are centered at the origin.} 
 $D_\eta$ of radius $>1$ such that 
  $\Psi^{\Omega}_{\phi,t}(\eta,z)$  is holomorphic in $\{(\eta,z)\mid 0<\Re \eta < 1/2\, , \,
z\in D_\eta\}$. 

\item[ii.] For any real $0<\eta <1/2$, we have the
{\it fractional response formula}
\begin{equation}\label{axiom}
\Psi^\Omega_{\phi, t}(\eta,1)=
M^{ \eta, \Omega}_{s=t} \int \phi (x)\rho_s(x) \, \D x \, .
\end{equation}

 \item[iii.] The power series
 $\Psi^{\Omega}_{\phi,t}(1/2,z)$ defines a holomorphic function in the
open unit disc. For a generic $C^N$ function $\tilde \phi$: 
the unit circle is a natural boundary for this function;
the limit  as $z\in (0,1)$ tends to $1$ of
$\Psi^{\Omega}_{\tilde \phi,t}(1/2,z)$ 
does not exist; the limit  as $z\in (0,1)$ tends to $1$ of 
$(z-1)\Psi^{\Omega}_{\tilde \phi,t}(1/2,z)$, if it exists, does not vanish. 

\item[iv.] For any $\eta$ with  $\Re \eta\in ( 1/2,1)$  there exists 
a disc $D_\eta$ with radius in $(1/\bar \lambda_t, 1)$ such that the function $\Psi^{\Omega}_\phi(\eta,z)$ is 
holomorphic in 
$\{(\eta,z)\mid 0<\Re \eta < 1/2\, , \,
z\in D_\eta\}$. 
For any $\eta$ with  $\Re \eta\in ( 1/2,1)$
and any generic $C^N$ function $\tilde \phi$  we have that
$\Psi^{\Omega}_{\tilde \phi,t}(\eta,z)$ 
has a
singularity in the open unit disc.

\item[v.] We have
$
\lim_{\eta \uparrow 1} \Psi^{\Omega}_{\phi,t}(\eta, z)=\Psi_{\phi,t}(z) 
$ as formal power series in $z$ (recall \eqref{Rudef}).
\end{itemize}
\end{customconj}

\smallskip

For families of  piecewise expanding maps, a more precise version
of [iii] for the ordinary susceptibility function  $\Psi_{\phi,t}(z)$  (similar to Conjecture~\ref{laconj+} below) was established  \cite[Theorem 1]{BMS},
using results  in \cite{Ba, BS0}. (We expect that other results of \cite{BMS}, on the iterated logarithm law e.g.,
can be adapted to the quadratic family.)

Also in the piecewise expanding setting, the analogue of [i] and [ii] in Conjecture~\ref{laconj},  replacing $1/2$ 
by $1$, and taking $\Omega$ to be a neighbourhood of $t$, has been established
in\footnote{The
weighted Marchaud derivatives in  \cite{ABLP}  could be useful to understand
the logarithmic factors appearing in \cite{BBS}.} \cite{ABLP}.

\smallskip
We explain next how the conjectured properties of $\Psi^{\Omega}_\phi(\eta, z)$ reflect  the behaviour
described in \cite{BBS} of the 
absolutely continuous invariant measure and may also contribute to resolve the paradox\footnote{The ``averaging'' response studied in \cite[\S3]{WG1} and  \cite[(16)]{WG2}
does not resolve the paradox, see Appendix~\ref{wormell}.}
arising from comparing the results of \cite{Ru0} and \cite{JR} with those of  \cite{BBS}.
(The fractional susceptibility function being holomorphic in two variables also raises the hope to use tools such as
Hartog's extension theorem.)

First, the $\eta$ H\"older
upper bounds of \cite{BBS} on $\Omega_{<1/2}$
together with Proposition~\ref{natomega}  and 
[ii] in Conjecture~\ref{laconj} would imply that, 
if   $\Omega_{<1/2}$ satisfies
\eqref{tscond0} for some $\beta>1$,
 \begin{equation*}
\lim_{\zeta \to\eta} \frac{\Psi^{\Omega}_{\phi, t}(\zeta,1)}
{\Gamma(\eta-\zeta)}=\frac{1}{\Gamma(1-\eta)}
\lim_{\delta \to 0, t+\delta \in \Omega}
\frac{ \RR_\phi (t+\delta)- \RR_\phi (t)  }
{\sgn(\delta)|\delta|^\eta}
=0 \, ,\,\, \forall \eta\in (0,1/2)\, .
\end{equation*}

Next, if [ii] in Conjecture~\ref{laconj} could be established at any $\eta\in[ 1/2,1)$ for which
 either side of \eqref{axiom} is well defined, then we would have for any  $t$ at which $\RR_\phi(t)$
is $\Omega$-Whitney $1/2$ differentiable (Definition~\ref{relevant}, Proposition~\ref{natomega} in \S\ref{five})
\begin{equation}\label{**}
\lim_{\zeta \uparrow 1/2} \frac{\Psi^{\Omega}_{\phi, t}(\zeta,1)}
{\Gamma(1/2-\zeta)}=\frac{1}{\Gamma(1/2)}
\lim_{\delta \to 0, t+\delta \in \Omega}
\frac{ \RR_\phi (t+\delta)- \RR_\phi (t)  }
{\sgn(\delta) |\delta|^{1/2}} \, .
\end{equation}
(If $t\in \MT$, \cite{BBS} furnishes  upper and lower bounds on $( \RR_\phi (t)- \RR_\phi (t+\delta_n)  )/|\delta_n|^{1/2}$,
for suitable sequences $\delta_n \to 0$,    but the existence of the limit in the right-hand side of
\eqref{**} is not known.)

If the last claim of [iii] in Conjecture~\ref{laconj} holds we also expect that, for  $\Omega$ satisfying Tsujii's
condition \eqref{tscond0} for all $\beta <2$, and generic $\phi$,
\begin{equation}\label{firstst}
\lim_{\zeta \uparrow 1/2} \frac{\Psi^{\Omega}_{\phi, t}(\zeta,1/2+\zeta)}
{\Gamma(1/2-\zeta)}\ne 0\, , \qquad
\lim_{\eta \uparrow 1/2}	\frac{\Psi^\Omega_{\phi,t}(\eta,1) }{ \Gamma(1/2-\eta)}
\ne 0 \, .
\end{equation}
In view of Proposition~\ref{natomega} in  \S\ref{five}, the above inequality would establish that
$\RR_\phi$ is not $\Omega$-Whitney $\eta$-differentiable if $\eta >1/2$
(Definition~\ref{relevant}).
In particular,  the ordinary susceptibility function 
\eqref{Rudef} at $z=1$
could not be interpreted as a derivative.
(The singularity  of $\Psi_t(z)$ in the open unit interval could
then be 
a ``scar'' of the singularity at $z=1$ of $\Psi^{\Omega}_{\phi,t}(\eta,z)$
for some $\eta<1$, presumably $\eta=1/2$.)
The inequalities \eqref{firstst} could be useful to determine
whether $t$ is a density point of the set $\Omega_{1/2}$ in \eqref{LU}.

\begin{remark}[Tangential families] In view of the linear response
result in \cite{BS2},  replacing the quadratic family by a ``tangential'' family
$\tilde f_\tau$ of smooth  unimodal maps all topologically conjugated to a TSR map
$\tilde f_t$, we expect that, taking $\Omega$ a small enough
neighbourhood of $t$,
claims [i] and [ii] in Conjecture~\ref{laconj}, hold,
replacing $1/2$ by $1$, and, in addition,
$$
\lim_{\eta \uparrow 1} \Psi^\Omega_{\phi,\tilde f_t} (\eta,1)
 = \lim_{ \tau \to t}
 \frac{\RR_\phi(t)  -\RR_\phi (\tau)  }{t-\tau} \, .
$$

It would be interesting, but more challenging, to investigate whether ``tangentiality'' of a family
$\tilde f_t$
at a single point $t_0$ 
implies some additional (Whitney) regularity of the response at $t_0$.
\end{remark}

% % % % % % % % % % % % % % % % % % % % % % % % % % % % % % % % % %
\subsection{Fractional transversality $\JJ_\eta$. Conjectures~\ref{conj2} and \ref{laconj+}}

It is well known that 
all\footnote{\label{foot1}In fact all ``summable'' parameters, i.e. those
	for which $\JJ({t})$ is absolutely convergent, are transversal,
see \cite[Cor~1.b]{Levin} and \cite[Cor~A.4]{A}.}
Collet--Eckmann parameters  $t_1$
are transversal (see \cite[Theorem 3]{Tsu2})
in the sense of Tsujii
\cite{Tsu} (see also Appendix~\ref{vanX}), i.e.
\begin{equation}
\label{eq.transversal}
\JJ({t}):=\sum_{j=0}^\infty\frac{\partial_\tau f_\tau(c_{j,\tau})|_{\tau=t}}{D f_{t}^j(c_{1,t})}=\sum_{j=0}^\infty\frac{1}{D f_{t}^j (c_{1,t})}\neq0\,.
\end{equation}

\smallskip
To state  Conjecture~\ref{laconj+}
and the fractional transversality
condition  appearing in Theorem~\ref{WTF} (see \S\ref{SS3}), setting $\sgn(x)=\frac{x}{|x|}$
for  $x\in \real_*$, and 
$\sgn(0)=0$, we let 
\begin{equation}\label{sigmak}
s_0=1\, ,\quad s_k:=s_{k,t}=\sgn(Df^k_t(c_{1,t})) \in \{ -1, +1\} 
\, , \quad k \ge 1 \, .
\end{equation}
Then, we define,
for  $t>1$ such that $f^k_{t}(c)\ne c$ for all $k\ge 1$, 
and $\eta >0$, 
\begin{equation}
\label{JMT}
\JJ_\eta(t)=\sum_{k=0}^\infty   \frac{s_{k,t}}{|D f^k_{t}(c_{1,t})|^\eta} \, ,
\end{equation}
whenever the sum converges absolutely, and in this case we say that $t$ satisfies the {\it $\eta$-summability condition.}   
Note that the parameter
$t=2$ (the full quadratic map) satisfies $\JJ_{1/2}(2)=0$.
We expect that $t=2$ is the only
$1/2$-summable parameter where  the {\it fractional transversality condition}
$\JJ_{1/2}(t)\ne 0$ fails: This is the main claim 
of {\bf  Conjecture~\ref{conj2},} supported by numerics,  in Section~\ref{numerics}.

\smallskip
Now, if $f_t$ is Collet--Eckmann,  setting 
$u_t=-{\rho_t(0) \cdot \sqrt \pi }/{2}\ne 0$, we put
\begin{equation}\label{defUU}
\UU_{ 1/2}(z)=\UU_{ 1/2, t}(z):=u_t \cdot \sum_{k=0}^\infty \frac{s_k}{z^k \sqrt{|Df_t^k(c_1)|}}\, .
\end{equation}
The function $\UU_{ 1/2}(z)$ 
is  holomorphic outside of the disc of radius $1/\sqrt \lambda_c$,
with   $\UU_{ 1/2,t}(1)\ne 0$ if and only if 
  $\JJ_{1/2}(t)\ne 0$. We shall also need the power series 
\begin{equation}\label{defUU+}
\UU^+_{ 1/2}(z)=\UU^+_{ 1/2, t}(z):=
u_t \cdot \sum_{k=0}^\infty \frac{1}{z^k \sqrt{|Df_t^k(c_1)|}}\, .
\end{equation}
The function
$\UU^+_{ 1/2}(z)$
is  holomorphic outside of the disc of radius $1/\sqrt \lambda_c$,
with  
\begin{equation*}
\JJ^+_{1/2}(t):=\sum_{k=0}^\infty   \frac{1}{\sqrt{D f^k_{t}(c_{1,t})|}}= \frac{\UU^+_{ 1/2, t}(1)}{u_t} \, \ne 0
\, .
\end{equation*}
Next, following \cite{BMS}  (where this function was denoted $\sigma_\phi$) 
we set, for   $\phi\in C^0$,
\begin{align}\label{defSig}\Sigma_\phi(z)= \Sigma_{\phi,t}(z):=\sum_{\ell=1}^\infty \phi(c_{\ell,t})z^{\ell-1}\, .
\end{align}
($\Sigma_\phi(z)$  is holomorphic in
the open unit disc. If $t \in \MT$, then $\Sigma_\phi(z)$ is rational.)

Recall that if $\phi:\real \to \complex$ is  $C^0$, compactly supported,
 and $C^1$ at $y\in \real$, the Hilbert transform of $\phi$ at $y$
is defined by the  Cauchy principal value
(see also \S\ref{prel})
\begin{equation}\label{Hilb}
(\HH \phi)(y):=\frac{1}{\pi} p.v. \int \frac{\phi(x)}{y-x} \, \D x \, .
\end{equation}
Then, for $\phi$ a $C^1$ function, we define a formal power series
\begin{align}\label{defSigH}
 \Sigma^\HH_\phi(z)= \Sigma^\HH_{\phi,t}(z):= \sum_{\ell=1}^\infty s_{\ell,t}\cdot \HH(\mathbf{1}_{I_t} \phi)(c_{\ell,t})z^{\ell-1}\, .
\end{align}
Finally, for $r>0$, $q>1$, and  a bounded sequence $\tilde\psi_t(\ell)$ of functions in 
the Sobolev space 
$H^r_q[-2,2]=\{ \varphi\mid \mathbf{1}_{[-2,2] }\cdot \varphi \in H^r_q(\real)\}$,
 we introduce the formal power series  
\begin{align}\label{defSigH2}
 \Sigma^{\tilde\psi_t}_t(z)=\sum_{\ell=1}^\infty s_{\ell,t}\cdot \tilde\psi_t(\ell)   z^{\ell-1} \, .
\end{align}

\smallskip
We can now state the announced complements to [iii] in Conjecture~\ref{laconj}:

\begin{customconj}{A+}\label{laconj+}
For $t$, $\Omega$, and $\phi$ as in Conjecture~\ref{laconj}, we have 
\begin{align*}
\Psi^{\Omega}_{\phi,t}(1/2,z)&=\UU_{1/2,t}(z) \Sigma_{\phi,t}(z) 
+ \WW^{\Omega}_{\phi, 1/2, t}(z)
+  \VV^{\Omega}_{\phi, 1/2, t}(z)\, ,
\end{align*}
with $\VV^{\Omega}_{\phi,1/2, t}(z)$ holomorphic in an open annulus $\AA$ containing $\SS^1$.  Moreover, there exist  $r>0$, $q>1$, and
functions
$\tilde\psi_t (\ell)\in H^r_q[-2,2]$, with $\int_{I_t} \tilde\psi_t(\ell)\, \D m=0$, 
such that 
\begin{align*}
 \WW^{\Omega}_{\phi, 1/2, t}(z)= \UU^{+}_{1/2,t}(z) \bigl [ \Sigma^\HH_{\phi,t}(z)
+  \sum_{k=0}^\infty z^k \int
(\phi \circ f_t^k)  \cdot  \Sigma^{\tilde \psi_t}_t(z)  \, \D m\bigr ]\, .
\end{align*}
Finally, 
$\Sigma^{\tilde \psi_t}_t(z)$, and, for generic $\tilde \phi \in C^N$ (any $N\ge 1$) the functions
 $\Sigma_{\tilde \phi,t}(z)$ and $\Sigma^\HH_{\tilde \phi,t}(z)$
  are holomorphic
in the open unit disc and have a natural boundary on
$\SS^1$.
\end{customconj}

\begin{remark}[Approximate Susceptibility Functions]\label{conjC}
We expect that claims [i], [iii], and [iv] (but not [ii])
of Conjecture~\ref{laconj}, as well as the claims of
 Conjecture~\ref{laconj+}, hold for the three approximate fractional
susceptibility functions $\Psi^\mathrm{fr}_\phi(\eta,z)$, $\Psi^{\mathrm{rsp}}_\phi(\eta,z)$, and $\Psi^{\Omega, \mathrm{sf}}_\phi(\eta,z)$, keeping the
same functions $\UU_{1/2}$, $\UU^+_{1/2}$,  $\Sigma_\phi$, $\Sigma^\HH_\phi$, and  $\Sigma^{\tilde \psi}$, and 
replacing $\WW^{\Omega,reg}_{\phi, 1/2}(z)$ and
$\VV^{\Omega}_{\phi, 1/2}(z)$ by suitable $\WW^{*}_{\phi, 1/2}(z)$ and
$\VV^{*}_{\phi, 1/2}(z)$,  for $*=\mathrm{fr}$,
$\mathrm{rsp}$, and $(\Omega,\mathrm{sf})$, respectively. 
Claim [v] for the  approximate fractional susceptibility
functions $\Psi^\mathrm{fr}_\phi(\eta,z)$ and $\Psi^{\mathrm{rsp}}_\phi(\eta,z)$ is the content of Proposition~\ref{propE}.
\end{remark}
% % %
% % % % % % % % % % % % % % % % % % % %
\subsection{Frozen  and response susceptibilities:  Theorem~\ref{WTF} and Proposition~\ref{propE}}\label{SS3}

We move to the rigorous results.
To keep this ``proof of concept'' paper short, 
we will focus on the countable subset $\MT\subset \SS$ of Misiurewicz--Thurston (MT)
parameters. This toy model setting allows us to present  new ideas  with the least possible technicalities.  In addition, the ``paradox'' discussed above occurs at MT parameters \cite{BBS}.

We
shall mostly study  here an approximate fractional susceptibility
function, the {\it frozen
	fractional susceptibility function} (Definition~\eqref{defr})
\begin{align*}
\Psi^\mathrm{fr}_\phi(\eta,z)=\Psi^\mathrm{fr}_{\phi,t}(\eta,z)
&= \sum_{k=0}^\infty  z^k \int
 (\phi \circ f_{t}^k) (x) 
M^\eta_s (\LL_s \rho_{t}(x))|_{s=t} \,
\D x\, ,
\end{align*}
where $M^\eta_s$ is the two-sided Marchaud fractional\footnote{We recall definitions in \S\ref{defMar}. A good introduction to fractional derivatives is the book \cite{MR}. See also the  short introduction \cite{OM} and the treatise
\cite{sam}.}  derivative of order $\eta$ and $\phi$ is $C^1$
and supported in $[-2,2]$.

Sedro \cite{Sedro} has recently proved  item [i] of
Conjecture~\ref{laconj} for $\Psi^\mathrm{fr}_\phi(\eta,z)$ for  Misiurewicz parameters.

{\it Our main rigorous result, {\bf Theorem~\ref{WTF},}  stated in Section~\ref{state}, furnishes
the analogue of  Conjecture~\ref{laconj+} for $\Psi^{\mathrm{fr}}(1/2,z)$,
considering  parameters $t\in \MT$.} In the MT case, the functions
$\Sigma_\phi$, $\Sigma^\HH_\phi$ and $\Sigma^{\tilde\psi}$ are rational
and the singularities of $\Psi^{\mathrm{fr}}_\phi(1/2,z)$ on the unit circle are simple
poles. (We also expect this to hold for $\Psi^{\Omega}_{\phi,t}(1/2,z)$
if $t\in \MT$.)

\smallskip

We also introduce (Definition~\ref{defrsp}) a {\it response fractional
susceptibility function} by taking the Marchaud derivative with respect to
$x$
\begin{align*}
\Psi^{\mathrm{rsp}}_\phi(\eta,z)=\Psi^{\mathrm{rsp}}_{\phi,t}(\eta,z)
&
=
\sum_{k=0}^\infty  z^k \int_{I_t}  M^\eta_x (\phi \circ f_{t}^k)
\cdot  \rho_{t} \,\D x \, .
\end{align*}
The response function is related to the frozen susceptibility function (Proposition~\ref{CC2})
and will be used to prove Theorem~\ref{WTF}.
(See \cite{ABLP} for a  fractional response function  in the piecewise expanding setting.)

\smallskip

Although their value at $1$ is not expected to coincide with
 $M^{\eta,\Omega} \RR_\phi(t)$, 
we believe that $\Psi^\mathrm{fr}_\phi(\eta,z)$ and $\Psi^{\mathrm{rsp}}_\phi(\eta,z)$
share the qualitative properties of $\Psi^\Omega_\phi(\eta,z)$ (Remark~\ref{conjC}).
Finally, recalling \eqref{Rudef}, Proposition~\ref{CC2} and Lemma~\ref{limitok} imply (see \S\ref{dealing}):
\begin{customprop}{D}\label{propE}
As formal power series,
$$
\lim_{\eta \uparrow 1} \Psi^\mathrm{fr}_\phi(\eta,z)=
\lim_{\eta \uparrow 1} \Psi^{\mathrm{rsp}}_\phi(\eta,z) =\Psi_\phi(z) \, .
$$
\end{customprop}

\subsection{Whitney fractional  integrals and derivatives:
Abel's remark and the semifreddo fractional susceptibility function  $\Psi^{\Omega,\mathrm{sf}}_\phi(\eta,z)$}

 In 
\S\ref{bonusOmega} we introduce Whitney fractional integrals $I^{\eta,\Omega}$ and prove Lemma~\ref{halff+omega}, the analogue of Abel's remark
for $I^{1/2,\Omega}$ (and suitable sets $\Omega$ satisfying \eqref{tscond0}).
In \S\ref{five}, we introduce Whitney--Marchaud derivatives $M^{\eta,\Omega}$, and
use them to define the {\it semifreddo fractional susceptibility
function} $\Psi^{\Omega,\mathrm{sf}}_\phi(\eta,z)$,
a stepping-stone to the fractional susceptibility function from
its frozen version. 
Proposition~\ref{natomega} gives conditions ensuring
$\lim_{\eta \uparrow 1}M^{\eta, \Omega}g(x)=g'_\Omega(x)$, where
$g'_\Omega(x)$ is the $\Omega$-Whitney derivative of $g$ at $x\in \Omega$,
from Definition~\ref{relevant}.
\S\ref{five} also contains Proposition~\ref{natomega} on
$\lim_{\eta \uparrow \zeta}
		 \bigl [ \frac{\Gamma(1-\eta)}{\Gamma(\zeta-\eta)}	(M^{\eta,\Omega} g) (x)  \bigr ]$.

\section{Defining fractional susceptibility functions}
\label{sec2}

\subsection{Preliminaries. Hilbert transform. Gamma and Beta functions}
\label{prel}
We next record classical facts for further use.
First, the
definition \eqref{Hilb} of the Hilbert transform
can be explicited as 
$(\HH \phi)(y)=-\frac{1}{\pi}\lim_{\delta\downarrow 0}
 \int_{\delta}^\infty \frac{\phi(y+u)-\phi(y-u)}{u} \, \D u$.
If $\phi$ is $C^1$ and compactly supported then $\HH \phi$ coincides with the following distributional derivative
\begin{equation*}
(\HH \phi)(y)=\frac{d}{d y}
\frac{1}{\pi}  \int \phi(x) \log |{y-x}| \, \D x \, ,
\end{equation*}
and the Cauchy principal value corresponds
to integration by parts,
since 
\begin{align*}
\frac{d}{d y}\frac{1}{\pi}  \int \phi(x) \log |{y-x}| \, \D x
&=\frac{d}{d y}\frac{1}{\pi}  \int \phi(y-u) \log |u| \, \D u
=\frac{1}{\pi}  \int \phi'(x) \log |{y-x}| \, \D x  .
\end{align*}
Note that there exists $C<\infty$ such that for any compact interval $J$
\begin{equation*}
|\HH (\mathbf{1}_{J}\phi)(x)|\le  C |J| \sup |\phi'|   \, , \forall  x \in \mbox{int} (J )\, .
\end{equation*}
Euler's Gamma function is $\Gamma(\eta)=\int_0^\infty x^{\eta-1} e^{-x} \, \D x$ 
 (recall that it has simple poles
at $\eta = 0, -1, -2,...$). 
The Beta function is defined for $\Re x>0$ and $\Re y>0$ by
$$
B(x,y)=\int_0^1 u^{x-1} (1-u)^{y-1} \, \D u \, .
$$
It satisfies $\Gamma(x) \Gamma(y)=B(x,y) \Gamma(x+y)$.
Since $\Gamma(3/2)=\sqrt \pi/2$, 
$\Gamma(1)=\Gamma(2)=1$, and $\Gamma(1/2)=\sqrt \pi$, we have
$B(1/2,1/2)=\pi$ and $B(1/2,3/2)=\pi/2$. Recall also that $\sin(\pi/4)=\cos(\pi/4)=\sqrt{2}/2$.

\subsection{Susceptibility
functions $\Psi^\Omega_\phi(\eta,z)$, $\Psi^\mathrm{fr}_\phi(\eta,z)$, $\Psi^\mathrm{rsp}_\phi(\eta,z)$. Proposition~\ref{propE}}
\label{ladeff}
We first motivate heuristically our definition of the fractional susceptibility
function $\Psi^\Omega_\phi(\eta,z)$. The starting point is
the right-hand side of \eqref{axiom} in [ii] from
Conjecture~\ref{laconj}, i.e. the Marchaud derivative of $\RR_\phi(t)$. Our first task is to
rewrite 
$$
\RR_\phi(s)-\RR_\phi(t)=
\int \phi \rho_{s} \D m - \int \phi \rho_{t} \D m
$$ 
along the lines of \cite{ABLP}:
If $s$ belongs to a suitable subset of $\Omega$ of $CE$, then  
for every  $r>0$, and $q>1$ there exists $\kappa<1$
such that for any bounded function $\phi$ supported in $[-2,2]$
and any  $\psi\in H^r_q[-2,2]$ with $\int_{I_t}  \psi \, \D m=0$, there exists $C_{\phi,  \psi}$ such that
$$
|\int \phi \, \LL_s^k( \psi)  \, \D m |=
|\int  (\phi \circ f_s^k) \,  \psi\,  \D m |
\le C_{\phi,  \psi} \kappa^k\, , \quad \forall k \ge 1 \, .
$$
In particular, if $\phi$ is supported in $I_t$,
\begin{equation}\label{trick}
\int \phi \, (\id-\LL_s) ^{-1} ( \psi )\, \D m = \sum_{k=0}^\infty \int
\phi   \LL_s^k (\psi)\,  \D m =
\sum_{k=0}^\infty  \int  (\phi \circ f_s^k)\,  \psi\,  \D m  \, .
\end{equation}
If $t$ also belongs to $\Omega$,   the fixed point property
 $\LL_\tau \rho_\tau =\rho_\tau$ for $\tau=s,t$ implies
$$
{\int \phi\, (\id-\LL_s) (\rho_s-\rho_{t})  \D m} =
\int \phi \, (\LL_s - \LL_{t}) \rho_{t} \,  \D m \, .
$$
Since  $\int (\LL_s - \LL_{t}) \rho_{t} \, \D m=0$ (using that
$f_t([c_{2,t},c_{1,t}])=[c_{2,t},c_{1,t}]\subset I_s$) if $|t-s|$ is small enough, we would like to multiply
the factor of $\phi$ in  both sides by $(\id-\LL_s)^{-1}$ to recover $\RR_\phi(s)-\RR_\phi(t)$ and
then attempt to implement the ``recipe'' in \S\ref{defMar} for the Marchaud derivative.  Writing
$(\id-z\LL_s)^{-1}
=\sum_{k=0}^\infty z^k \LL_s^k$, and using \eqref{trick},
this motivates our definition for the fractional susceptibility
function:

\begin{definition}[$\Omega$-Whitney--Marchaud  fractional
susceptibility function] 
For  $t \in \TSR$ and $\epsilon>0$
as in \eqref{extendd}, let $\Omega\subset \TSR$   have $t$ as
a Lebesgue density point.
For $\Re \eta \in (0,1)$, the (Whitney--Marchaud)  fractional
susceptibility function
$\Psi_\phi^\Omega (\eta, z) =\Psi^\Omega_{\phi, t,\epsilon} (\eta, z)$ (of the quadratic family, along
$\Omega$ at $t$, for the observable $\phi \in C^1$)
is the  formal power series in $z$
\begin{align}
\label{1}\Psi^\Omega_\phi (\eta, z):=\frac{\eta}{2 \Gamma(1-\eta)}
\sum_{k=0}^\infty  z^k  \int
\int_{\real \cap (\Omega-t)} &\phi ( f_{t+\delta}^k (x)) \cdot \\
\nonumber  & \cdot 
\frac{(\LL_{t+ \delta}-\LL_{t} )  \rho_{t}(x) }{|\delta|^{1+\eta}} \sgn(\delta) \, \D \delta \,
\D x  \, .
\end{align}
(The choice of $\epsilon$ implies that 
$x\mapsto (\LL_{t+ \delta}-\LL_{t} )  \rho_{t}(x)$  is supported in $ I_{t,\epsilon}\subset I_t$.)
\end{definition}

The coefficient of 
$z^k$ in  the  power series  \eqref{1} is a sum of improper integrals,
for $\delta\in (-\infty,0)$ and $\delta\in (0, \infty)$.
For each fixed $k\ge 1$, every $\delta$ such that $t+\delta\in \Omega$, and every $\psi_\delta \in L^1$ (and $\phi$) supported in $I_{t,\epsilon}$, we have, since $I_{t,\epsilon}\subset I_t$,
\begin{align}\label{termwise}
z^k \int_{I_{t}} (\phi \circ f_{t+\delta}^k) (x) \cdot 
\psi_\delta(x) \, \D x =
\int_{I_{t}} \phi(x) z^k (\LL_{t+\delta}^k \psi_\delta) (x)  \, \D x \, .
\end{align}
The  presence of $(\id - z\LL_{t+\delta})^{-1}$ in \eqref{termwise} is the reason we restrict the integral to good parameters
 $t+\delta\in \Omega$ (see also Appendix~\ref{wormell}).

\smallskip
In the present work, we  mostly study  the  {\it frozen
	fractional susceptibility function:}

\begin{definition}[Frozen susceptibility function]\label{defr}
Let $t$ be a TSR parameter and choose $\epsilon>0$
as in \eqref{extendd}. For $\eta\in (0,1)$
the frozen susceptibility  function $\Psi^\mathrm{fr}_\phi(\eta,z)=\Psi^\mathrm{fr}_{\phi,t,\epsilon}(\eta,z)$
(of the quadratic family, at $t$ for the observable
$\phi\in C^1$) is  the formal power series\footnote{\label{footd}Recalling  \eqref{extendd}, the function
$x\mapsto M^\eta_s (\LL_s \rho_{t}(x))|_{s=t}$ is supported in $I_{t,\epsilon}\subset I_t$.}
\begin{align}\label{is M}
\Psi^\mathrm{fr}_\phi(\eta,z)
&= \sum_{k=0}^\infty  z^k \int_{I_{t}}  
 (\phi \circ f_{t}^k) (x) 
M^\eta_s (\LL_s \rho_{t}(x))|_{s=t} \,
\D x\, ,
\end{align}
where $M^\eta_t$ is the two-sided Marchaud fractional derivative of order $\eta$, in the parameter $t$,  in the sense of distributions of order one (Definition~\ref{ddistr}). 
In other words, for fixed $\eta$, we have, as a formal power series in $z$,
\begin{align}
\nonumber \Psi^\mathrm{fr}_\phi(\eta,z)= \frac{\eta}{2\Gamma(1-\eta)} \sum_{k=0}^\infty  z^k &\int_{I_{t}}
 (\phi \circ f_{t}^k) (x) \\
\nonumber
& \cdot
\lim_{\epsilon \to 0}
\int_{|t|>\epsilon} 
\frac{\bigl ((\LL_{t+ \delta} -\LL_{t})  \rho_{t}\bigr )(x)}{|\delta|^{1+\eta}} \sgn(\delta) \, \D \delta \,
\D x\, ,
\end{align}
where the integral over $\D t$ is viewed as a distribution of order one.
\end{definition}

Applying \eqref{termwise} to each term of \eqref{is M}, we find (formally)
\begin{align*}
\Psi^\mathrm{fr}_\phi(\eta,z)
&=  \int_{I_{t}} 
\phi \, (\id -z \LL_{t}) ^{-1}
(M^\eta_s (\LL_s \rho_{t}(x))|_{s=t}) \,
\D x\, ,
\end{align*}

In Section~\ref{rigorr}, we shall prove Theorem~\ref{WTF}
on the frozen susceptibility function for $\eta=1/2$ and Misiurewicz--Thurston
parameters $t$.

\smallskip

Formulas  for fractional response are not as neat
as for linear response, since the usual Leibniz  and chain rules are
replaced by infinite expansions in the case of fractional derivatives. (See Eq.~2.209 in Section 2.7.3
of \cite{Pod}
for the chain rule.
For the Leibniz formula, see \S 15 in \cite{sam}.)
However, we shall see in Proposition~\ref{CC2} that  a simplification occurs for the frozen susceptibility function.
This motivates the definition of a
{\it response fractional
susceptibility function:} 

\begin{definition}[Response susceptibility function]\label{defrsp}
For $\eta \in (0,1)$ and $\phi \in C^1$ is
compactly  supported,  the response susceptibility function is defined by the following formal
power series
\begin{align}\nonumber
\Psi^{\mathrm{rsp}}_\phi(\eta,z)
&
:=\sum_{k=0}^\infty  z^k \int_{I_{t}}  M^\eta_x (\phi \circ f_{t}^k)
\cdot X_{t} \rho_{t} \,\D m=
\sum_{k=0}^\infty  z^k \int_{I_{t}}  M^\eta_x (\phi \circ f_{t}^k)
\cdot  \rho_{t} \,\D m \, .
\end{align}
\end{definition}

If $\eta \in (0,1/2)$, then
\begin{equation}
\label{identity}
\Psi^{\mathrm{rsp}}_\phi(\eta,z)
= 
-\sum_{k=0}^\infty  z^k \int  (\phi \circ f_{t}^k)
\cdot
M^\eta_x  \bigl ( \rho_{t}  \bigr )
\, \D x  
\end{equation}
follows from integration by parts for the Marchaud 
derivative\footnote{Use that $\phi$ 
and $\rho_{t}$ are compactly supported while, on the one hand, we have
$M^\eta_x (\phi \circ f_{t}^k)\in L^{p}_{loc}$
for all $p\ge 1$,
while $\phi \circ f_{t}^k\in L^s$ 
for all $s\ge 1$, and, 
on the other hand, we have $M^\eta (\rho_{t}) \in L^r_{loc}$ for \cite{Sedro} any
 $1\le r <2(1+2\eta)^{-1}$,
while 
$\rho_{t}\in L^{\tilde r}$ for all 
$1\le \tilde r <2$.}
\cite[(6.27)]{sam}.
We will see in Lemma~\ref{identity1} that  
\eqref{identity} in fact holds for all $\eta \in (0,1)$, 
up to taking the
Marchaud derivative of $\rho_{t}$   in the sense of distributions.

In the limit as $\eta\to 1$
the following easy lemma shows that
the response susceptibility function converges to the
Ruelle susceptibility function:

\begin{lemma}[Ruelle susceptibility as a limit of  response susceptibilities]
\label{limitok} Fix $t\in \SS$ and a compactly supported
$\phi \in C^1$, and let $\Psi_\phi(z)$
be Ruelle's susceptibility function \eqref{Rudef}.
Then, as formal power series in $z$,
$$
 \lim_{\eta \uparrow 1} \Psi^{\mathrm{rsp}}_\phi(\eta, z)=\Psi_\phi(z)
\, .
$$
\end{lemma}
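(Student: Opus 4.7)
My plan is to prove convergence coefficient by coefficient. Since both sides are formal power series in $z$, the claim reduces to showing, for each $k\ge 0$,
\begin{equation}\label{plan:coef}
\lim_{\eta\uparrow 1}\int_{I_{t_0}}M^\eta_x(\phi\circ f_{t_0}^k)(x)\,\rho_{t_0}(x)\,\D x=\int_{I_{t_0}}(\phi\circ f_{t_0}^k)'(x)\,\rho_{t_0}(x)\,\D x\,.
\end{equation}
Fix $k$ and set $g:=\phi\circ f_{t_0}^k$. Extending $f_{t_0}$ to the polynomial $t_0-x^2$ on $\real$, and using that $\phi\in C^1$ is supported in $[-2,2]$, makes $g\in C^1_c(\real)$.

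The main step is the identity
\begin{equation}\label{plan:id}
M^\eta g(x)=\frac{1}{2\Gamma(1-\eta)}\int_\real\frac{g'(x+v)}{|v|^\eta}\,\D v\,,\qquad\eta\in(0,1)\,,
\end{equation}
obtained from the definition of the two-sided Marchaud derivative by pairing $+u$ and $-u$ to write $M^\eta g(x)=\frac{\eta}{2\Gamma(1-\eta)}\int_0^\infty\frac{g(x+u)-g(x-u)}{u^{1+\eta}}\,\D u$, substituting $g(x+u)-g(x-u)=\int_{-u}^u g'(x+v)\,\D v$, and applying Fubini (absolute integrability: the integrand is dominated by $2\|g'\|_\infty u^{-\eta}$ on the compact support of~$g'$, and $u^{-\eta}$ is integrable at $0$ for $\eta<1$).

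Integrating \eqref{plan:id} against $\rho_{t_0}(x)\,\D x$ and applying Fubini once more yields
\[
\int_{I_{t_0}}M^\eta_x g(x)\,\rho_{t_0}(x)\,\D x=\int_\real K_\eta(v)\,F_k(v)\,\D v\,,\qquad K_\eta(v):=\frac{1}{2\Gamma(1-\eta)\,|v|^\eta}\,,
\]
with $F_k(v):=\int_{I_{t_0}}g'(x+v)\rho_{t_0}(x)\,\D x$ continuous (since $g'$ is continuous and bounded, and $\rho_{t_0}\in L^1$) and supported in some fixed interval $[-R,R]$. A direct computation gives, for any $0<\delta<R$,
\[
\int_{-R}^{R}K_\eta(v)\,\D v=\frac{R^{1-\eta}}{\Gamma(2-\eta)}\xrightarrow[\eta\uparrow 1]{}1\,,\qquad\int_{\delta\le|v|\le R}K_\eta(v)\,\D v=\frac{R^{1-\eta}-\delta^{1-\eta}}{\Gamma(2-\eta)}\xrightarrow[\eta\uparrow 1]{}0\,,
\]
so the family of positive measures $K_\eta(v)\,\D v$ restricted to $[-R,R]$ is an approximate identity concentrating at $0$ as $\eta\uparrow 1$.

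The textbook approximate-identity argument (split at $|v|=\delta$, use continuity of $F_k$ at $0$, then let $\delta\to 0$) now yields $\int_\real K_\eta F_k\,\D v\to F_k(0)=\int_{I_{t_0}}g'(x)\rho_{t_0}(x)\,\D x$, which is \eqref{plan:coef}. I do not expect a genuine obstacle: the only mildly delicate points are \eqref{plan:id}, which is essentially the semigroup relation $M^\eta=I^{1-\eta}\circ\partial_x$ for Riesz-type operators, and the two applications of Fubini, both of which are routine once $g\in C^1_c(\real)$ is arranged.
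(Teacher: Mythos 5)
Your proof is correct and follows essentially the same route as the paper, whose entire proof is the one line ``apply $\lim_{\eta\uparrow 1}M^\eta g=g'$ (e.g.\ \cite{ABLP}) to $g=\phi\circc f_{t_0}^k\in C^1$'' applied coefficientwise. Your identity \eqref{plan:id} is exactly the relation $2M^\eta=(I^{1-\eta}_++I^{1-\eta}_-)\circ\partial_x$ that the paper records in \eqref{neat}, and your approximate-identity argument simply supplies the details (including the interchange of the $\eta$-limit with the integration against $\rho_{t_0}$) that the cited reference is assumed to cover.
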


\noindent The proof of  Lemma~\ref{limitok} does not use that $f_{t+\tau}(x)=f_{t}(x)+\tau$.

\begin{proof}[Proof of Lemma~\ref{limitok}]
Apply
 $\lim _{\eta \to 1} M^\eta g=g'$  (e.g.  \cite{ABLP}) to $g=\phi\circ f_{t}^k\in C^1$.
\end{proof}

Finally,  using Lemma~\ref{identity1}, we give the easy proof
of the following remarkable result
in \S\ref{dealing}
(the identity \eqref{magic}  greatly simplifies the proof of our main result
on the frozen susceptibility function, Theorem~\ref{WTF},
for more general smooth unimodal maps it seems there is no way to
bypass the  study of $M^{1/2}_s( \LL_s \rho_{t})$):

\begin{proposition}[Relating the frozen and response susceptibility functions]\label{CC2}
For any\footnote{\label{foot0}The proof shows that the proposition holds more generally, for example for mixing TSR parameters.}
mixing $t\in \MT$ and  $\eta \in (0,1)$,  we have, as distributions of order one,
\begin{equation}\label{Prop2.5}
M^\eta _s(\LL_s\rho_{t}(x))|_{s=t}=-
M^\eta_x \rho_{t}(x)+\frac{g_{\eta}(x)}{\Gamma(1-\eta)}
 \, ,
\end{equation}
where $g_\eta\in H^r_q$ for some $r>0$ and $q>1$, 
with $\sup_{\eta>\epsilon_1} \|g_\eta\|_{ H^r_q }<\infty$ 
for any fixed $\epsilon_1>0$, and
$
\int_\real g_\eta(x) \D m =0 
$.

In addition, there exists $\kappa<1$ and for any 
compactly supported $\phi\in C^1$,  there exists  $\VV^{\mathrm{rsp}}_{\phi,\eta}(z)=\sum_{j\ge 0} v_j z^j$ holomorphic in  the disc of radius
$\kappa^{-1}$ such that
\begin{equation}\label{magic}\Psi^{\mathrm{fr}}_\phi(\eta, z)-\VV^{\mathrm{rsp}}_{\phi,\eta}(z)=
\Psi^{\mathrm{rsp}}_\phi(\eta, z)\mbox{ as formal
power series in $z$.}
\end{equation}
Finally, we have, as formal power series,
$
\lim_{\eta \uparrow 1} \Psi^{\mathrm{fr}}_\phi(\eta, z)=\Psi_\phi(z)
$.
\end{proposition}

Proposition~\ref{CC2} and Lemma~\ref{limitok} imply 
Proposition~\ref{propE}: both  the response 
and the frozen fractional susceptibility functions converge to the
Ruelle susceptibility function as $\eta \to 1$. (However $\Psi^\mathrm{rsp}_\phi$ and $\Psi^\mathrm{fr}_\phi$
do not satisfy  [ii] from Conjecture~\ref{laconj}.)

\section{Half integrals of square root spikes}
\label{Abel}

After  recalling the definitions of Riemann--Liouville fractional integrals, we revisit
Abel's computation of the one-sided half-integral of a square root spike
and extend it to the two-sided  half-integral. The corresponding statements,
 Lemma~\ref{halff+} and Lemma~\ref{halff}, will be used in Section~\ref{mainsec}
to compute Marchaud derivatives.

\subsection{Riesz potentials and Riemann--Liouville fractional integrals}

For any  $\phi \in L^1$ and for $\eta \in (0,1)$, 
 the Riesz potential fractional integral is defined for
$\Re \eta >0$, $\eta \ne 1, 3, 5, \ldots$ by (see \cite[(5.2)--(5.3), \S 12.1]{sam})
\begin{equation}\label{RLhalf}
I^\eta \phi(t)
=\frac{1}{2 \Gamma(\eta)\cos (\eta \pi/2)}
\int_{-\infty}^\infty
\frac {\phi(\tau)}{|t-\tau|^{1- \eta}} \, \D \tau= \frac{I^\eta_+\phi(t) + I^\eta_- \phi (t)}{2 \cos (\eta \pi/2)}\, ,
\end{equation}
where $I^\eta_\pm$ are the  left- and right-sided Riemann--Liouville fractional integrals \cite[(5.2)--(5.3)]{sam} (there is a typo in the 
second line of 
\cite[(5.4)]{sam})
\begin{align*}
I^\eta_+ \phi(t)
=\frac{1}{ \Gamma(\eta)}
\int_{-\infty}^t
\frac {\phi(\tau)}{(t-\tau)^{1- \eta}}  \, \D \tau 
=\frac{1}{ \Gamma(\eta)}\int_0^\infty \frac{\phi(t-y)}{y^{1-\eta}} \, \D y\, , 
\\
I^\eta_- \phi(t)
=\frac{1}{ \Gamma(\eta)}
\int_{t}^\infty
\frac {\phi(\tau)}{(\tau-t)^{1- \eta}}  d \tau =
\frac{1}{ \Gamma(\eta)}\int_0^\infty \frac{\phi(t+y)}{y^{1-\eta}} \, \D y\, .
\end{align*}

If $g_t(x)$ is a function of two variables $x$ and $t$,  we write $(I^\eta_{t} g_t)(x)$  to denote the fractional integral acting on the parameter $t$
and evaluated at $x$ 
 and $t$, and similarly for the one-sided integrals $I^\eta_{-,t}$ and $I^\eta_ {+,t}$.

Note for further use that, setting $Q\phi (t)=\phi (-t)$,
$T_a\phi(t)=\phi(t+a)$, we have 
\begin{equation}
\label{QQ+}
I^{\eta}_{\sigma} \circ Q= Q \circ I^{\eta}_{-\sigma}\, ,
\,\,\, \, 
 I^{\eta}_\sigma \circ T_a= T_a \circ I^{\eta}_\sigma\, .
\end{equation}

The case which will interest us most is $\eta=1/2$, that is,
``half Riesz potential  integrals''
or ``half Riemann--Liouville  integrals.''
In \S\ref{half+}, we recall the proof of  a key
observation of Abel regarding the ordinary one-sided half-Riemann--Liouville integral of square root
spikes, and we present its two-sided version, Lemma~\ref{halff}.
(Lemma~\ref{halff} will be a key ingredient to prove
our main result in Section~\ref{rigorr}.)

\subsection{Abel's remark: One-sided  half integration of square-root spikes}
\label{half+}

In this section, we recall a result of Abel on one-sided half integrals
 (Lemma~\ref{halff+}) and extend it to two-sided half integrals (Lemma~\ref{halff}). The corresponding results will be used to prove
Lemma~\ref{AbelMarchaud} below about the half Marchaud derivative of a spike.

The following fact was  probably first observed by Abel \cite{Ab0,Ab} (see also \cite{Rag}):

\begin{lemma}[Abel's remark]\label{halff+}
Fix $k\ge 1$ and $\sigma\in \{-1, +1\}$.
Consider the left and right  square-root spikes (in $x$) at $c_k+t$
\begin{equation}\label{lrspike}
\phi_{c_k,\sigma}(x,t)=(|x-c_k-t|)^{-1/2} \mathbf{1}_{\sigma x > \sigma (c_k+t)}\, ,
\quad x, t\in \real \, .
\end{equation}
Then the one-sided Riemann--Liouville half integrals
$I^{1/2}_{\pm} (\phi_{c_k,\mp})$   (with respect to $t$)  
are the following Heaviside jumps (in $x$) at
$c_k+t$:
$$
I^{1/2}_{-,t} (\phi_{c_k,+}) (x,t)= \sqrt \pi \cdot 
\mathbf{1}_{x > c_k+t}(x)\, ,\,\,
I^{1/2}_{+,t} (\phi_{c_k,- })(x,t)= \sqrt \pi \cdot
\mathbf{1}_{x < c_k+t}(x)\, .
$$
\end{lemma}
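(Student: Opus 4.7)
\emph{Proof plan.} The computation reduces to a Beta-function identity, so the plan is to plug the definitions into the formula for $I^{1/2}_\pm$ and evaluate the resulting one-dimensional integral. I will do the first identity in detail and get the second one by symmetry.

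First I would write, using \eqref{RLhalf} and $\Gamma(1/2)=\sqrt\pi$,
\begin{equation*}
I^{1/2}_{-,t}(\phi_{c_k,+})(x,t)=\frac{1}{\sqrt\pi}\int_0^\infty \frac{\phi_{c_k,+}(x,t+y)}{y^{1/2}}\,\D y=\frac{1}{\sqrt\pi}\int_0^\infty \frac{\mathbf{1}_{x>c_k+t+y}}{y^{1/2}\,|x-c_k-t-y|^{1/2}}\,\D y.
\end{equation*}
If $x\le c_k+t$ the indicator is identically zero, so the integral vanishes, matching the right-hand side. If $x>c_k+t$, set $a:=x-c_k-t>0$; the integrand is supported on $y\in(0,a)$ and the constraint removes the absolute value, giving
\begin{equation*}
I^{1/2}_{-,t}(\phi_{c_k,+})(x,t)=\frac{1}{\sqrt\pi}\int_0^a \frac{\D y}{y^{1/2}(a-y)^{1/2}}.
\end{equation*}

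Next I would perform the substitution $y=au$, $\D y=a\,\D u$, which cancels the $a$-dependence and reduces the integral to the standard Beta integral recalled in \S\ref{prel}:
\begin{equation*}
\frac{1}{\sqrt\pi}\int_0^1 \frac{\D u}{u^{1/2}(1-u)^{1/2}}=\frac{B(1/2,1/2)}{\sqrt\pi}=\frac{\pi}{\sqrt\pi}=\sqrt\pi.
\end{equation*}
Combining both cases yields $I^{1/2}_{-,t}(\phi_{c_k,+})(x,t)=\sqrt\pi\cdot\mathbf{1}_{x>c_k+t}$, which is the first claim.

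Finally, for the second identity I would invoke the reflection relation \eqref{QQ+}, namely $I^{1/2}_{+}\circ Q=Q\circ I^{1/2}_{-}$ applied in the variable $t$. One checks immediately from \eqref{lrspike} that, as functions of $t$, $Q\phi_{c_k,-}(x,\cdot)=\phi_{-c_k,+}(-x,\cdot)$ up to the appropriate sign conventions, so the first identity transforms into the second one, with $\mathbf{1}_{x>c_k+t}$ becoming $\mathbf{1}_{x<c_k+t}$. Alternatively one can simply rerun the above calculation with $I^{1/2}_{+,t}\phi(t)=\pi^{-1/2}\int_0^\infty \phi(t-y)y^{-1/2}\,\D y$: now the indicator $\mathbf{1}_{x<c_k+t-y}$ is nonzero precisely when $y<(c_k+t)-x$, and the identical substitution yields $\sqrt\pi\cdot\mathbf{1}_{x<c_k+t}$. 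There is no real obstacle here beyond keeping the signs and the support conditions straight; the computation is essentially the Abel/Beta identity $B(1/2,1/2)=\pi$.
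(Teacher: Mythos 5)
Your proof is correct and follows essentially the same route as the paper's: plug the definition of $I^{1/2}_{\mp,t}$ into the spike, observe the support condition, reduce to the Beta integral $B(1/2,1/2)=\pi$ by the linear substitution, and obtain the second identity from the reflection property \eqref{QQ+} (the paper reflects in $t$ via $Q\circ T_{2(x-c_k)}$, you reflect $x$ and $c_k$ as well, but this is the same idea). No issues.
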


\begin{proof}[Proof of Lemma~\ref{halff+}] 
The half  integral $I^{1/2,t}_{-}$ of  $\phi_{c_k,+}(x,t)$ with respect to $t$ is 
\begin{align*} 
 (I^{1/2,t}_{-}\phi_{c_k,+})(x)&= 
\frac{1}{\Gamma(1/2)} 
\int_{t}^{+\infty}  \frac{\phi_{c_k,+}(x,\tau)}{ (\tau -t)^{1/2} }\, \D \tau\\
&= \frac{1}{\Gamma(1/2)}
 \int_{t}^{+\infty} \frac{(x-{c_k}-\tau)^{-1/2} \mathbf{1}_{ y >  {c_k}+\tau}(x) }
 {(\tau -t)^{1/2}} \, \D \tau\\ 
&=\begin{cases} 0  &\text{  if }    c_k +t  \geq x,   \\  
\frac{1}{\Gamma(1/2)} \int_{t}^{x-{c_k}}  \frac{1}{((\tau-t)(x-{c_k}-\tau))^{1/2}}  \, \D \tau 
&\text{  if }  {c_k}+t<x \, .   \end{cases}
\end{align*} 
If $c_k+t <x$, making the substitution $\tau = t+(x-c_k-t)u$, we get
\begin{align} \label{p14}
\int_{t}^{x-{c_k}} \frac{1}{((\tau-t)(x-{c_k}-\tau))^{1/2}}  \, \D \tau 
 =  \int^1_0  \frac{1}{(u(1-u))^{1/2}} \, \D u=B(1/2,1/2)\, .
\end{align} 
Recalling 
$B(1/2,1/2)=\pi$ and $\Gamma(1/2)=\sqrt \pi$, we find 
$$
(I^{1/2}_{-,t}\phi_{c_k,+})(x,t) = 
\begin{cases} 0  &\text{  if }    c_k +t \geq {x} \, ,  
 \\  \sqrt \pi&\text{  if } c_k +t < {x} \, .  \end{cases}.
$$

The other claim follows from \eqref{QQ+} since
$$
\phi_{c_k,-} (x,t)= 
\phi_{c_k,+} (x,2(x-c_k)-t)= Q \circ T_{2 (x-c_k)} (\phi_{c_k,+} ) (x,t)\, .
$$
Indeed,  we find
\begin{align*}
\nonumber I^{1/2}_{+,t} \phi_{c_k,-} (x,t)
&= I^{1/2}_{+,t} \circ Q \circ T_{2 (x-c_k)} (\phi_{c_k,+} ) (x,t)\\
\nonumber
&=  I^{1/2}_{-,t} \circ T_{2 (x-c_k)}(\phi_{c_k,+}) (x,-t)
= 
I^{1/2}_{-,t} \phi_{c_k,+} (x,-t+2(x-c_k)) \, .
\end{align*}
Finally, $x> c_k -t+2(x-c_k)$ if and only if $x<c_k+t$.
\end{proof}

Replacing the one-sided Riemann--Liouville fractional integral $I^\eta_\pm$
by the (two-sided) Riesz potential  $I^\eta$ from  \eqref{RLhalf}, Lemma~\ref{halff+} must be replaced by the following lemma, which includes
 an unbounded logarithm
corresponding to the ``other side.''

\begin{lemma}[Two-sided version of Abel's remark]\label{halff}
For any real number  $\EE> 1$,
any integer $k\ge 1$ and
any $x\in I$, the one-sided Riemann--Liouville half integrals
  of the $\EE$-truncated right and left square-root spikes
\begin{equation}\label{cutoff}
\phi_{c_k,+,\EE} (x,t) =
\frac{\mathbf{1}_{(c_k+t,c_k+t+\EE)}(x)}
{ (x-c_k-t)^{1/2} }\, ,  \, \, \phi_{c_k,-,\EE} (x,t) =
\frac{\mathbf{1}_{(c_k+t-\EE, c_k+t)}(x)}
{ (c_k+t-x)^{1/2} }\, ,
\end{equation}
satisfy, for $\sigma=\pm $ and any  $|x-c_k-t |< \EE/2$,
\begin{align*}
&I^{1/2}_{\sigma} (\phi_{c_k,\sigma,\EE}) (x,t)=
\frac{\sigma}{\sqrt{\pi}}\bigl (
-
 \log |x- c_k-t| +\log \EE+
G_{\EE}(\sigma(t-x+c_k))\bigr )\,  ,
\end{align*}
where $G_{\EE}(y)$ is analytic on $|y |< \EE/2$,
with $\lim_{\EE\to \infty}\sup_{ |y|<\EE/2}
| \partial_y G_\EE(y)|=0$, and
$$
\sup_{\EE> 1} \,\, \sup_{| y|< \EE/2}
\max\{|  G_{\EE}(y)|,
| \partial_y G_{\EE}(y)|, | \partial^2_y G_{\EE}(y)|\} < \infty\, .
$$
\end{lemma}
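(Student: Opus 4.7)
The plan is to reduce the two cases $\sigma=\pm$ to one another via the reflection identity $\phi_{c_k,-,\EE}(x,t)=\phi_{c_k,+,\EE}(x,2(x-c_k)-t)$ combined with the conjugation rule $I^{1/2}_\sigma\circ Q=Q\circ I^{1/2}_{-\sigma}$ from \eqref{QQ+}, in exactly the way the case $I^{1/2}_{+}(\phi_{c_k,-})$ was obtained from $I^{1/2}_{-}(\phi_{c_k,+})$ at the end of the proof of Lemma~\ref{halff+}. Translation invariance in $t$ (also recorded in \eqref{QQ+}) further reduces to $c_k=0$, so the entire lemma will follow once $I^{1/2}_{+,t}\phi_{0,+,\EE}(x,t)$ is computed explicitly for $|x-t|<\EE/2$.

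For that computation, I would unfold the Riemann--Liouville integral and intersect its range $\{\tau<t\}$ with the spike's support $\{\tau\in(x-\EE,x)\}$. The hypothesis $|x-t|<\EE/2$ guarantees that this intersection is either $(x-\EE,t)$ (when $t<x$) or $(x-\EE,x)$ (when $t\geq x$). In either subcase, a linear substitution ($u=x-\tau$ or $u=t-\tau$) brings the inner integral to the canonical form
\begin{equation*}
\int_{0}^{L}\frac{du}{\sqrt{u(u+a)}}\;=\;2\log\bigl(\sqrt{L}+\sqrt{L+a}\bigr)-\log a\,,
\end{equation*}
with $a=|x-t|$ and $L\in\{\EE-a,\EE\}$ depending on the subcase. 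Writing $2\log(\sqrt{L}+\sqrt{L+a})=\log\EE+2\log\bigl((\sqrt{L}+\sqrt{L+a})/\sqrt{\EE}\bigr)$ isolates the singular $-\log|x-t|$ term and the $\log\EE$ term, and packages the remainder as $G_\EE(y)$ with $y=t-x$ of the explicit closed form $G_\EE(y)=2\log\bigl(1+\sqrt{1+y/\EE}\bigr)$, up to an additive constant that can be folded into $\log\EE$ so that the two subcases produce a single analytic expression.

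The stated analyticity and uniform bounds on $G_\EE$ then follow by inspection: on the disk $\{|y|<\EE/2\}$ one has $1+y/\EE>1/2$, so $\sqrt{1+y/\EE}$ is holomorphic and $G_\EE$ is analytic. A single differentiation gives $G_\EE'(y)=\bigl[\EE\sqrt{1+y/\EE}\,(1+\sqrt{1+y/\EE})\bigr]^{-1}=O(1/\EE)$ uniformly on the disk, yielding the decay $\sup_{|y|<\EE/2}|G_\EE'(y)|\to 0$ as $\EE\to\infty$; a second differentiation gives $G_\EE''=O(1/\EE^2)$, and $|G_\EE|$ itself is trivially bounded uniformly in $\EE>1$ and $|y|<\EE/2$. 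I do not anticipate a serious obstacle: the proof is elementary calculus built around the antiderivative $2\log(\sqrt{u}+\sqrt{u+a})$ of $1/\sqrt{u(u+a)}$ that already underlies Abel's one-sided formula. The main point of care is to check that the two subcases ($t<x$ versus $t\geq x$) glue into a single analytic $G_\EE$, and that the sign of the argument of $G_\EE$ flips correctly under the reflection $t\mapsto 2(x-c_k)-t$ so that the final expression acquires the $\sigma(t-x+c_k)$ argument demanded by the statement.
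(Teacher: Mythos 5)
Your proposal is correct and follows essentially the same route as the paper's proof in Appendix~\ref{Ahalff}: reduce $\sigma=-$ to $\sigma=+$ by the reflection $t\mapsto 2(x-c_k)-t$ together with \eqref{QQ+}, split the $\sigma=+$ computation into the subcases $t<x-c_k$ and $t\ge x-c_k$, and evaluate each via an explicit antiderivative of $1/\sqrt{u(u+a)}$. The only (cosmetic) difference is that your choice of antiderivative yields the closed form $G_\EE(y)=2\log\bigl(1+\sqrt{1+y/\EE}\bigr)$ directly, which is exactly the paper's $-2\log H_\EE(y)$ since $H_\EE(y)=\bigl(1+\sqrt{1+y/\EE}\bigr)^{-1}$, so the two subcases glue without any extra additive constant and the stated bounds on $G_\EE$, $\partial_yG_\EE$, $\partial_y^2G_\EE$ follow as you indicate.
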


The elementary proof of the above crucial lemma (which will be used to
prove Lemmas~\ref{AbelMarchaud} and ~\ref{AbelMarchaud2}) is given in Appendix~\ref{Ahalff}.

Finally, the remark below will be used several times in the sequel:

\begin{remark}[Phase and parameter half-integrals of a spike]
\label{exchangext}
Since $x > c_k+t-u$ if and only if $t<x+u-c_k$, we have for any $1<\EE\le \infty$, recalling \eqref{cutoff},
\begin{align*}
I^{1/2}_{-,t} (\phi_{c_k,+,\EE}) (x,t)
&=I^{1/2}_{+,x} (\phi_{c_k,+,\EE}) (x,t)\, , \, \, 
 I^{1/2}_{+,t} (\phi_{c_k,-,\EE}) (x,t)=I^{1/2}_{-,x} (\phi_{c_k,-,\EE}) (x,t)\, ,
\end{align*}
and  for any $1<\EE < \infty$
\begin{align*}
 I^{1/2}_{+,t} (\phi_{c_k,+,\EE}) (x,t)&=I^{1/2}_{-,x} (\phi_{c_k,+,\EE}) (x,t)\, , \,\,
 I^{1/2}_{-,t} (\phi_{c_k,-,\EE}) (x,t)=I^{1/2}_{+,x} (\phi_{c_k,-,\EE}) (x,t)\, .
\end{align*}
\end{remark}

% % % % % % % % % % % % % % % % % % % % % % % % % % % % % % % % % % % %
\section{Marchaud derivatives applied to spikes and square roots}
\label{mainsec}

After recalling the definition of Marchaud derivatives $M^\eta$ and extending them
as distributions in \S\ref{defMar}, we show 
in \S\ref{applM} how $M^{1/2}$ acts on the singular components
(spikes and square roots) of the invariant density $\rho_t$. 
The lemmas in this section will be crucial to prove Theorem~\ref{WTF} in
Section~\ref{rigorr}. 

\subsection{One-sided and two-sided Marchaud derivatives
$M^\eta_\pm$ and  $M^\eta$}\label{defMar}
Let $g:\real \to \complex$ be bounded and $\gamma$-H\"older.
We recall that the left-sided Marchaud fractional derivative (with lower limit 
$a=-\infty$)  \cite[pp. 110--111, Theorem 5.9, p. 225]{sam}, where it is denoted
by $\mathbf{D}^\eta_+$, see also \cite[\S 2.2.2.3]{hilf} is defined for 
$\eta \in (0,\gamma)$ and $x \in \real$, by 
\begin{align}
\nonumber(M^\eta_{+} g) (x)
=\frac{\eta}{ \Gamma(1-\eta)}
\int_{-\infty}^{x}
\frac {g(x)-g(y)}{(x-y)^{1+ \eta}}  \, \D y
=\frac{\eta}{ \Gamma(1-\eta)}
\int_{-\infty}^{0}
\frac {g(x)-g(x+\tau)}{|\tau|^{1+ \eta}}  \, \D \tau\\
\label{above} =\frac{\eta}{ \Gamma(1-\eta)}
\int_{0}^{\infty}
\frac {g(x)-g(x-\tau)}{\tau^{1+ \eta}}  \, \D \tau\, .
\end{align}
If $g$ is bounded on $\real$ and differentiable\footnote{If $g$ is bounded  and differentiable to the left at $x$,  the limit as $\eta \uparrow 1$ of $M^\eta_{+}(g)(x)$ is  equal to the left-sided derivative $g'_-(x)$, the notation is thus confusing.} at $x$,  the limit as $\eta \uparrow 1$ of $M^\eta_{+}(g)(x)$ is  equal to the ordinary derivative $g'(x)$
(see e.g. \cite[\S 3.2]{OM} or \cite{ABLP}).

The integral \eqref{above} is an improper integral. In the application of this paper, $g(t)$ will be
bounded as $t \to \pm\infty$, so\footnote{This is an advantage of  Marchaud derivatives over Riemann--Liouville
fractional derivatives.} the
only delicate limit is $\tau\to 0$. Concretely, we will work with the expression
(see \cite[(5.59--5.60)]{sam})
\begin{equation*}
(M^\eta_{+} g)(x)
=\lim_{\epsilon \uparrow 0} (M^\eta_{+,\epsilon} g)(x)
:= \lim_{\epsilon \uparrow 0} \frac{\eta}{ \Gamma(1-\eta)}
\int_{-\infty}^{\epsilon}
\frac {g(x)-g(x+\tau)}{|\tau|^{1+ \eta}}  \, \D \tau\, .
\end{equation*}

The right-sided Marchaud fractional derivative (with upper limit 
$b=+\infty$) is defined for $\eta \in (0,1)$ and $x \in \real$ by
\begin{align*}
M^\eta_{-} g(x) &=\frac{\eta}{ \Gamma(1-\eta)}
\int_0^{\infty}
\frac {g(x)-g(x+\tau)}{\tau^{1+ \eta}}  \, \D \tau\\
&=\lim_{\epsilon \downarrow 0 } (M^\eta_{-,\epsilon} g)(x)=
\lim_{\epsilon \downarrow 0 }\frac{\eta}{ \Gamma(1-\eta)}
\int_\epsilon^{\infty}
\frac {g(x)-g(x+\tau)}{\tau^{1+ \eta}}  \, \D \tau\, .
\end{align*}
If $g$ is bounded on $\real$ and differentiable at $x$ (differentiable to the right is enough),  then  $\lim_{\eta \uparrow 1}M^\eta_{-}(g)(x)=-g'(x)$
(see e.g.  \cite{ABLP}).

We define the two-sided Marchaud derivative by
\begin{equation*}
M^\eta g(x) =\frac{M^\eta_{+} g(x)-
	M^\eta_{-} g(x)}{2} \, .
\end{equation*}
Note that $M^\eta g(x)=\lim_{\epsilon \downarrow 0}M^\eta_\epsilon g(x)$
where
\begin{equation}\label{Meps}
M^\eta_\epsilon g(x)=\frac{\eta}{2 \Gamma(1-\eta)}
\int_{|\tau|> \epsilon}\frac {g(x+\tau)-g(x)}{|\tau|^{1+ \eta}} \sgn(\tau)  \, \D \tau
\, .
\end{equation}

Note for further use that, recalling $Q g (t)=g (-t)$,
$T_ag (t)=g(t+a)$, we have
\begin{equation}
\label{QQM+}
M^{\eta}_{\sigma} \circ Q= Q \circ M^{\eta}_{-\sigma}\, ,
\,\,\, \, 
 M^{\eta}_\sigma \circ T_a= T_a \circ M^{\eta}_\sigma\, \, , \, \sigma=\pm\, .
\end{equation}
Therefore,
\begin{equation}
\label{QQM}
M^{\eta} \circ Q= -Q \circ M^{\eta}\, ,
\,\,\, \, 
 M^{\eta} \circ T_a= T_a \circ M^{\eta}\, .
\end{equation}

\smallskip

We shall sometimes need to consider $M^\eta g$ (if $g$ is  not H\"older, for example) in the {\it sense of distributions (of order one):}

\begin{definition}[Marchaud derivative in the sense of distributions
of order one] \label{ddistr}
For  $\eta \in (0,1)$ and a measurable function $g$ 
such that the integral
 $G(y)=\int_{-\infty}^y g(u) \, \D u$ is well-defined
and almost everywhere finite, with\footnote{One could weaken this condition, up to exchanging the limit and the derivative in 
\eqref{Mardistr}. We shall not need this more general notion.} 
$$\lim_{\epsilon \to 0}  
M^\eta_\epsilon G(x)\in L^1_{loc}\, , 
$$
we define the two-sided Marchaud derivative
of $g$  in the sense of distributions
of order one by 
setting, for  any compactly supported
$C^1$ function $\psi$,
\begin{align}
&\int (M^\eta g)(x) \psi(x) \, \D x:=-\int \bigl  [\lim_{\epsilon \to 0}  
M^\eta_\epsilon G(x) \bigr ]  \, 
\psi'(x) \, \D x \, .
\label{Mardistr}
\end{align}
The one-sided Marchaud derivatives $M^\eta_-$ and $M^\eta_+$ in the sense of distributions are defined analogously (for $M^\eta_-$, it is convenient to set $G(y)=-\int^{\infty}_y g(u) \, \D u$.).
\end{definition}

Note that \eqref{QQM} and \eqref{QQM+} extend to the setting of
Definition~\ref{ddistr}.

\smallskip
If $g_t(x)$ is a function of two variables $x$ and $t$,  then $(M^\eta_{t} g_t)(x)$ or $(M^\eta_{s} g_s)(x)|_{s=t}$ denote the Marchaud derivative acting on the parameter $t$
and evaluated at $x$ and $t$, and similarly for the one-sided derivatives $M^\eta_{-,t}$ and $M^\eta_ {+,t}$.

\begin{remark}[Marchaud in the sense of distributions]
If $g \in C^1$
is compactly suppported then the definition \eqref{Mardistr}
is in fact an identity which can be deduced from Fubini, Lebesgue dominated convergence, and integration by parts
(for $C^1$ compactly supported $\psi$). Let us write the computation in the
one-sided case:
\begin{align*}
\int (M^\eta_{+} g)(x) \psi(x) \, \D x&=\int \biggl [ \lim_{\epsilon \uparrow 0} \frac{\eta}{ \Gamma(1-\eta)}
\int_{-\infty}^{\epsilon}
\frac {g(x)-g(x+\tau)}{|\tau|^{1+ \eta}}  \, \D \tau \biggr ] \,
\psi(x) \, \D x\\
&= 
\frac{\eta}{ \Gamma(1-\eta)}
\int_{-\infty}^{0} \int
\frac {g(x)-g(x+\tau)}{|\tau|^{1+ \eta}}   \,
\psi(x) \, \D x \,  \D \tau\\
&= -
\frac{\eta}{ \Gamma(1-\eta)}
\int_{-\infty}^{0} \int
\frac {G(x)-G(x+\tau)}{|\tau|^{1+ \eta}}    \,
\psi'(x) \, \D x \,  \D \tau\\
&=
-\int \biggl [ \lim_{\epsilon \uparrow 0} \frac{\eta}{ \Gamma(1-\eta)}
\int_{-\infty}^{\epsilon}
\frac {G(x)-G(x+\tau)}{|\tau|^{1+ \eta}}  \, \D \tau \biggr ] \,
\psi'(x) \, \D x \, .
\end{align*}
\end{remark}

\begin{remark}[Marchaud and Riemann--Liouville]\label{thesame}
If $g$ is $C^1$ and and $|g'(\tau)|=O(|\tau|^{\eta-1-\epsilon})$
for some $\epsilon>0$
as $\tau \to -\infty$
(\cite[pp. 109--110]{sam}) then  the left-sided Marchaud derivative of $g$ coincides with the left-sided Riemann--Liouville derivative
with lower limit $a=-\infty$ of $g$
$$
(M^\eta_{+} g) (t)=\frac{d}{dt} I^{1-\eta}_+(g)(t)=
\frac{1}{\Gamma(1-\eta)}\frac{d}{dt} \int_{-\infty}^t \frac{g(\tau)}{(t-\tau)^\eta} \, \D \tau\, .
$$
Similarly, the  right-sided Marchaud derivative of $g$ coincides with the right-sided Riemann--Liouville derivative
with upper limit $a=\infty$ of $g$
$$
(M^\eta_{-} g) (t)=-\frac{d}{dt} I^{1-\eta}_-(g)(t)=
\frac{-1}{\Gamma(1-\eta)}\frac{d}{dt} \int^{\infty}_t \frac{g(\tau)}{(\tau-t)^\eta} \, \D \tau\, .
$$

The remark above
will be used in the proof of Lemma~\ref{identity1}. (Note that Lemma~\ref{AbelMarchaud} is a generalisation of this remark, for $g$  a one-sided spike and $\eta=1/2$.)
\end{remark}

\subsection{The half derivative $M^{1/2}$ of  spikes, square roots, and $C^1$ functions}
\label{applM}

The key fact we shall use is the following lemma about 
 Marchaud derivatives
 (in the sense \eqref{Mardistr} 
of distributions) of  spikes and truncated spikes,   for  $\EE >1$, 
$$\phi_{x_0,\sigma }(x)=\frac{\mathbf{1}_{\sigma x> \sigma x_0} } {\sqrt{|x-x_0|}}
\, ,
\qquad
\phi_{x_0,\sigma,\EE}(x)= \mathbf{1}_{0<\sigma(x - x_0)<\EE} \cdot \phi_{x_0,\sigma}(x)\, .
$$

\begin{lemma}[Half Marchaud derivatives of a spike]\label{AbelMarchaud}  For $x_0\in \real$ and $\sigma=\pm$, the following holds:
 The one-sided half Marchaud derivatives
satisfy, as distributions  on continuous compactly supported functions, 
$$
M_+^{1/2}( \phi_{x_0,+})(x)= \sqrt{\pi}\cdot \delta_{x_0}\, ,
\quad M_-^{1/2}( \phi_{x_0,-})(x)= {\sqrt{\pi}}\cdot \delta_{x_0}\, .
$$
The two-sided half Marchaud derivative  satisfies,
as a distribution  on $C^1$ compactly supported functions,
$$M^{1/2}( \phi_{x_0,\sigma})(x)
=\frac{\sigma}{2\sqrt \pi} \cdot 
\biggl ( \pi \delta_{x_0}
 - \frac{1}{x-x_0} \biggr ) \, .
$$
Finally,  for any $\EE>1$,
 the two-sided half Marchaud derivative  satisfies,
as a distribution  on $C^1$
 functions supported in $[x_0-\EE/2,x_0+\EE/2]$,
$$M^{1/2}( \phi_{x_0,\sigma,\EE})(x)
=\frac{\sigma }{2\sqrt\pi }  \cdot\biggl ( 
\pi \delta_{x_0}
-
  \frac{1}{x-x_0} +\Phi_{\EE}(\sigma(x_0-x)) \biggr )\, ,
$$
where $\Phi_{\EE}(y)$ is analytic on $|y |< \EE/2$,
with $\lim_{\EE\to \infty}\sup_{ |y|<\EE/2}
| \Phi_\EE(y)|=0$, and
$$
\sup_{\EE> 1} \,\, \sup_{| y|< \EE/2}
\max\{
| \Phi_{\EE}(y)|, | \partial_y \Phi_{\EE}(y)|\} < \infty\, .
$$
\end{lemma}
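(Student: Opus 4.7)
The plan is to reduce all four identities to the Abel-type half-integration results of Section~\ref{Abel} (Lemmas~\ref{halff+} and~\ref{halff}), via the bridge $M^\eta_\pm = \pm\partial_x I^{1-\eta}_\pm$ of Remark~\ref{thesame} (interpreted distributionally as in Definition~\ref{ddistr}), and to eliminate the $\sigma=-$ cases via the reflection identities \eqref{QQM+} and \eqref{QQM}. Throughout, Remark~\ref{exchangext} transports the half-integration-in-$t$ formulas of Section~\ref{Abel} into half-integration-in-$x$ statements about the same spike.

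For the one-sided claim $M^{1/2}_+(\phi_{x_0,+}) = \sqrt{\pi}\delta_{x_0}$, I would compute $I^{1/2}_+(\phi_{x_0,+})(x)$ directly: the substitution $\tau = x_0 + (x - x_0)u$, combined with $B(1/2, 1/2) = \pi$ and $\Gamma(1/2) = \sqrt{\pi}$, produces the Heaviside $\sqrt{\pi}\,\mathbf{1}_{x > x_0}$, exactly as in Lemma~\ref{halff+}. Its distributional derivative is $\sqrt{\pi}\delta_{x_0}$, and the identity $M^{1/2}_+ = \partial_x I^{1/2}_+$ is legitimate in the sense of Definition~\ref{ddistr} because the primitive $G(y) = 2\sqrt{(y - x_0)_+}$ is locally bounded, so the integration-by-parts computation of Remark~\ref{obvious} extends against continuous compactly supported test functions. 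The companion identity $M^{1/2}_-(\phi_{x_0,-}) = \sqrt{\pi}\delta_{x_0}$ is symmetric, or alternatively follows from \eqref{QQM+}.

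For the two-sided truncated statement I would split $M^{1/2} = \tfrac{1}{2}(M^{1/2}_+ - M^{1/2}_-)$. On the window $|x - x_0| < \EE/2$, the computation above still yields $M^{1/2}_+(\phi_{x_0,+,\EE}) = \sqrt{\pi}\delta_{x_0}$, since truncation at $x_0 + \EE$ does not affect $I^{1/2}_+$ on that window. For the opposite-sided piece, Remark~\ref{exchangext} identifies $I^{1/2}_-(\phi_{x_0,+,\EE})$ (in $x$) with the matching half-integral in $t$ covered by Lemma~\ref{halff}, yielding
\begin{equation*}
I^{1/2}_-(\phi_{x_0,+,\EE})(x) = \tfrac{1}{\sqrt{\pi}}\bigl(-\log|x - x_0| + \log \EE + G_{\EE}(x_0 - x)\bigr), \qquad |x - x_0| < \EE/2.
\end{equation*}
Applying $M^{1/2}_- = -\partial_x I^{1/2}_-$ distributionally --- using that $\partial_x \log|x - x_0|$ is the principal-value distribution $\mathrm{p.v.}\,\tfrac{1}{x - x_0}$, which is precisely why $C^1$ test functions are required, cf.\ \eqref{Hilb2} --- gives the stated formula with $\Phi_{\EE}$ expressed through $G'_{\EE}$. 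The uniform boundedness of $\Phi_{\EE}$ and its vanishing as $\EE \to \infty$ follow from the corresponding properties of $G'_{\EE}$ stated in Lemma~\ref{halff}.

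The untruncated two-sided statement for $\phi_{x_0,+}$ then follows by passing $\EE \to \infty$ against a fixed $C^1$ compactly supported test function, at which point the $\Phi_{\EE}$ correction disappears and the divergent $\log \EE$ is an additive constant killed by the distributional $x$-derivative. The $\sigma = -$ cases, both truncated and untruncated, reduce to $\sigma = +$ via $\phi_{x_0,-} = Q T_{2x_0}\phi_{x_0,+}$ and the reflection rules \eqref{QQM+} and \eqref{QQM}, using that $Q$ around $x_0$ fixes $\delta_{x_0}$ and negates $\tfrac{1}{x - x_0}$. The main obstacle I foresee is book-keeping the distributional framework for the pieces that are not separately locally integrable --- in particular, $M^{1/2}_-(\phi_{x_0,+})$ of the untruncated right-spike, whose primitive grows at $+\infty$ and where Remark~\ref{thesame} does not apply classically, so that the natural move is to define this object only through the $\EE \to \infty$ limit of its truncated counterpart rather than to interpret it in isolation.
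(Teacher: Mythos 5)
Your proposal is correct and follows essentially the same route as the paper: the same reduction to Lemmas~\ref{halff+} and~\ref{halff} via Remark~\ref{exchangext}, the same distributional integration by parts in the sense of Definition~\ref{ddistr}, the same decomposition of $\phi_{x_0,+}$ into a truncated spike plus a tail whose $M^{1/2}_-$ is an explicit $O(1/\EE)$ smooth function, and the same use of \eqref{QQM+}, \eqref{QQM} for $\sigma=-$. The only (cosmetic) difference is the direction of the limiting argument: you deduce the untruncated two-sided formula by letting $\EE\to\infty$ in the truncated one, whereas the paper first obtains the untruncated formula by observing that the $\EE$-dependent correction $-\widetilde G_{\EE}+\partial G_{\EE}/\sqrt\pi$ must vanish identically (since the left-hand side is independent of $\EE$ and the correction tends to zero), and then derives the truncated claims from it.
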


In view of the expansion \eqref{Ruellerho} for the invariant density, we also need  Marchaud derivatives of  square roots
and truncated square roots, defined for $\EE>1$ by,
$$
\bar \phi_{x_0,+,\EE}(x)= \mathbf{1}_{x_0<x < x_0+\EE} \cdot
\sqrt{x-x_0 }\, ,\, \,\,
\bar \phi_{x_0,-,\EE}(x)= \mathbf{1}_{x_0-\EE<x < x_0} \cdot
\sqrt{x_0-x }\, .
$$

\begin{lemma}[Half Marchaud derivatives of a square root]\label{AbelMarchaud2} Let  $x_0\in \real$.
The one-sided Marchaud derivatives of square roots satisfy, for $\sigma=\pm$,
$$
M_{\sigma}^{1/2}\bigl ({\mathbf{1}_{\sigma x> \sigma x_0}}({\sqrt{|x-x_0|}})\bigr )= 
\frac{\sqrt \pi}{2} \mathbf{1}_{ \sigma x > \sigma x_0}(x) 
\, .
$$

For $\EE >1$, the two-sided Marchaud derivatives of truncated square roots
 satisfy 
$$
M^{1/2}(\bar \phi_{x_0,\sigma,\EE})(x)
=\frac{\sigma }{2\sqrt\pi } 
\bigl ( \pi \mathbf{1}_{ \sigma x > \sigma x_0}(x)
 -  \log|x-x_0| +\log \EE \bigr )+ \bar \Phi_{\EE}(\sigma(x_0-x) ) \, ,
$$
where $\bar \Phi_{\EE}(y)$ is analytic on $|y |< \EE/2$,
with $\lim_{\EE\to \infty}\sup_{ |y|<\EE/2}
| \bar \Phi_\EE(y)|=0$, and
$$
\sup_{\EE> 1} \,\, \sup_{| y|< \EE/2}
\max\{
| \bar \Phi_{\EE}(y)|, | \partial_y \bar \Phi_{\EE}(y)|\} < \infty\, .
$$ 
\end{lemma}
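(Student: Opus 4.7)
The plan is to handle the one-sided identity by a direct Beta-function reduction of the Riemann--Liouville integral, and to derive the truncated two-sided identity from Lemma~\ref{halff} by exploiting the pointwise factorization $\bar\phi_{x_0,+,\EE}(y)=(y-x_0)\,\phi_{x_0,+,\EE}(y)$, which reduces the truncated square-root spike to an explicit multiple of the truncated spike already treated.

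For the one-sided identity, Remark~\ref{thesame} applied to $g(y)=\mathbf{1}_{y>x_0}\sqrt{y-x_0}$ (which vanishes on $(-\infty,x_0]$) gives $M^{1/2}_+g=\tfrac{d}{dx}I^{1/2}_+g$. The substitution $y=x_0+(x-x_0)u$ converts $I^{1/2}_+g(x)$ for $x>x_0$ into $\tfrac{x-x_0}{\sqrt{\pi}}\int_0^1 u^{1/2}(1-u)^{-1/2}\,\D u=\tfrac{x-x_0}{\sqrt{\pi}}B(3/2,1/2)=\tfrac{\sqrt{\pi}}{2}(x-x_0)$, whose derivative is $\tfrac{\sqrt{\pi}}{2}\mathbf{1}_{x>x_0}$. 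The $\sigma=-$ case follows from the symmetry \eqref{QQM+}: reflection composed with translation by $2x_0$ intertwines $M^{1/2}_\pm$ and sends one spike to the other.

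For the truncated two-sided identity, fix $\sigma=+$; the inequality $x<x_0+\EE$ on $|x-x_0|<\EE/2$ makes the truncation invisible to $M^{1/2}_+$, so the one-sided computation already gives $M^{1/2}_+(\bar\phi_{x_0,+,\EE})(x)=\tfrac{\sqrt{\pi}}{2}\mathbf{1}_{x>x_0}$. Writing $y-x_0=(y-x)+(x-x_0)$ inside $I^{1/2}_-(\bar\phi_{x_0,+,\EE})(x)$ yields
\begin{equation*}
I^{1/2}_-(\bar\phi_{x_0,+,\EE})(x)=\tfrac12\,I^{3/2}_-(\phi_{x_0,+,\EE})(x)+(x-x_0)\,I^{1/2}_-(\phi_{x_0,+,\EE})(x),
\end{equation*}
and the semigroup identity $I^{3/2}_-=I^1_-\circ I^{1/2}_-$ together with $\tfrac{d}{dx}I^1_-=-\mathrm{id}$ gives $\tfrac{d}{dx}I^{3/2}_-=-I^{1/2}_-$, so that $M^{1/2}_-=-\tfrac{d}{dx}I^{1/2}_-$ combined with the Leibniz rule produces
\begin{equation*}
M^{1/2}_-(\bar\phi_{x_0,+,\EE})(x)=-\tfrac12\,I^{1/2}_-(\phi_{x_0,+,\EE})(x)+(x-x_0)\,M^{1/2}_-(\phi_{x_0,+,\EE})(x).
\end{equation*}
Remark~\ref{exchangext} identifies $I^{1/2}_-(\phi_{x_0,+,\EE})(x)$ with $I^{1/2}_{+,t}(\phi_{x_0,+,\EE})(x,0)$, which Lemma~\ref{halff} evaluates to $\tfrac{1}{\sqrt{\pi}}(-\log|x-x_0|+\log\EE+G_\EE(x_0-x))$ on $|x-x_0|<\EE/2$; differentiating in $x$ gives $M^{1/2}_-(\phi_{x_0,+,\EE})(x)=\tfrac{1}{\sqrt{\pi}}(\tfrac{1}{x-x_0}+G_\EE'(x_0-x))$. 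Multiplication by $(x-x_0)$ collapses the $\tfrac{1}{x-x_0}$ singularity to the bounded constant $\tfrac{1}{\sqrt{\pi}}$ and leaves the analytic cross-term $\tfrac{(x-x_0)G_\EE'(x_0-x)}{\sqrt{\pi}}$. Averaging $M^{1/2}=(M^{1/2}_+-M^{1/2}_-)/2$ then extracts the announced principal term, and packages the $G_\EE$, $G_\EE'$, and collapsed constant pieces into a single analytic function $\bar\Phi_\EE(\sigma(x_0-x))$; the uniform bounds and the limit $\lim_{\EE\to\infty}\sup|\bar\Phi_\EE|=0$ transfer from the corresponding controls on $G_\EE$ and $G_\EE'$ in Lemma~\ref{halff}. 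The $\sigma=-$ case is handled by \eqref{QQM} and translation. The main technical obstacle is the final bookkeeping: verifying that the $(x-x_0)\cdot\tfrac{1}{x-x_0}$ cancellation is rigorous in the distributional sense, and that the $G_\EE$ and $(x-x_0)G_\EE'$ contributions combine into a single $\bar\Phi_\EE$ with the claimed uniform behavior as $\EE\to\infty$, which in particular requires the rate in Lemma~\ref{halff} for $\sup|G_\EE'|$ to absorb the $\EE$-growth of the $(x-x_0)$ factor.
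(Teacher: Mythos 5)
Your route is genuinely different from the paper's. The paper deduces both claims by recycling the explicit $\epsilon$-regularised computations already carried out in the proof of Lemma~\ref{AbelMarchaud}: there the antiderivative $\tilde\phi_{x_0,\EE}$ of the truncated spike, which equals twice the truncated square root up to a Heaviside at $x_0+\EE$, is exactly the object treated in \eqref{forlater}, \eqref{recalling3}, \eqref{recalling3b} and \eqref{recalling4}, so the square-root lemma is read off from those displays. You instead use the Beta-function identity $B(3/2,1/2)=\pi/2$ for the one-sided case, and for the two-sided case the factorization $\bar\phi_{x_0,+,\EE}(y)=(y-x_0)\phi_{x_0,+,\EE}(y)$ together with the semigroup law $I^{3/2}_-=I^1_-\circ I^{1/2}_-$ and the Leibniz rule. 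Your intermediate identities are correct: I checked that $M^{1/2}_-(\bar\phi_{x_0,+,\EE})=-\tfrac12 I^{1/2}_-(\phi_{x_0,+,\EE})+(x-x_0)M^{1/2}_-(\phi_{x_0,+,\EE})$ agrees with a direct evaluation of the Marchaud integral, and that $(x-x_0)G_\EE'(x_0-x)=\tfrac{1}{\sqrt{1-(x-x_0)/\EE}}-1$, so the cross term is indeed bounded and analytic on $|x-x_0|<\EE/2$.

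The problem is the final step, which you assert rather than carry out. Substituting your own formulas into $M^{1/2}=(M^{1/2}_+-M^{1/2}_-)/2$ yields the principal part $\tfrac{1}{4\sqrt\pi}\bigl(\pi\mathbf{1}_{x>x_0}-\log|x-x_0|+\log\EE\bigr)$, i.e.\ \emph{half} of the announced $\tfrac{1}{2\sqrt\pi}(\cdots)$. This is not a slip on your side: since $M^{1/2}_-\bar\phi_{x_0,+,\EE}$ is continuous across $x_0$ (the logarithmic singularity is even in $x-x_0$ and the finite parts match from both sides), the jump of $M^{1/2}\bar\phi_{x_0,+,\EE}$ at $x_0$ must be half the jump of $M^{1/2}_+\bar\phi_{x_0,+,\EE}$, namely $\sqrt\pi/4$, whereas the printed two-sided formula has jump $\pi/(2\sqrt\pi)=\sqrt\pi/2$; the two displayed claims of the lemma are therefore mutually inconsistent by a factor of $2$, and your computation lands on the value consistent with the (verified) one-sided claim. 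A proof must record this explicitly instead of claiming to ``extract the announced principal term.'' Relatedly, the worry you flag at the end is justified: $(x-x_0)G_\EE'(x_0-x)$ is $O(1)$ but not $o(1)$ uniformly on $|x-x_0|<\EE/2$, and together with the collapsed constant $1/\sqrt\pi$ the remainder you package into $\bar\Phi_\EE$ satisfies the uniform bound but not $\lim_{\EE\to\infty}\sup|\bar\Phi_\EE|=0$. None of this affects the use of the lemma in Theorem~\ref{WTF}, where only the $H^r_q$ regularity of the Heaviside-plus-logarithm-plus-analytic decomposition matters, but as a proof of the statement as printed your argument does not close.
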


\begin{lemma}[Action of Marchaud derivatives on $C^1$ functions]\label{AbelMarchaud3} 
For any $\eta \in (0,1)$ and any $C^1$ function $g:\real \to \real$
with $\sup_\real |g'|<\infty$, the two-sided Marchaud derivative $M^{\eta}(g)$ 
is $(1-\eta)$-H\"older. 
\end{lemma}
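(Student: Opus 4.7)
The plan is to prove the estimate directly from the two-sided integral representation
\[
M^\eta g(x) = \frac{\eta}{2\Gamma(1-\eta)} \int_0^\infty \frac{g(x+\tau) - g(x-\tau)}{\tau^{1+\eta}} \, \D\tau,
\]
which I obtain by combining the definitions of $M^\eta_\pm$ in \eqref{above}, splitting the symmetric $\epsilon$-truncated integral in \eqref{Meps} into contributions over $\tau>\epsilon$ and $\tau<-\epsilon$, and changing variables. I first observe that the bound $|g(x+\tau)-g(x-\tau)|\le 2\|g'\|_\infty\,\tau$ makes the integrand $O(\tau^{-\eta})$ near $0$, hence integrable there since $\eta<1$; boundedness of $g$ (which is implicit in the setting of the Marchaud derivative) makes the integrand $O(\tau^{-1-\eta})$ at $\infty$, hence integrable since $\eta>0$. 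In particular, the $\epsilon\downarrow 0$ limit exists and no principal value is needed.

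Next, given $x,y\in\real$ with $h:=|x-y|>0$, I will write
\[
M^\eta g(x) - M^\eta g(y) = \frac{\eta}{2\Gamma(1-\eta)} \int_0^\infty \frac{[g(x+\tau)-g(x-\tau)] - [g(y+\tau)-g(y-\tau)]}{\tau^{1+\eta}} \, \D\tau
\]
and split the integral at $\tau=h$. For the \emph{far} part $\tau\ge h$, I bound the numerator by applying $|g'|\le\|g'\|_\infty$ twice:
\[
\bigl|[g(x+\tau)-g(y+\tau)] - [g(x-\tau)-g(y-\tau)]\bigr| \le 2\|g'\|_\infty\, h,
\]
and integrate: $\int_h^\infty \tau^{-1-\eta}\D\tau = \eta^{-1} h^{-\eta}$, yielding a contribution $\le C_1\|g'\|_\infty h^{1-\eta}$. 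For the \emph{near} part $\tau<h$, I estimate each of the two terms $g(x\pm\tau)-g(x\mp\tau)$ and $g(y\pm\tau)-g(y\mp\tau)$ separately by $2\|g'\|_\infty\tau$, so that
\[
\int_0^h \frac{2\cdot 2\|g'\|_\infty\, \tau}{\tau^{1+\eta}} \, \D\tau = \frac{4\|g'\|_\infty}{1-\eta}\, h^{1-\eta},
\]
giving a contribution $\le C_2\|g'\|_\infty h^{1-\eta}$.

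Adding the two contributions yields $|M^\eta g(x) - M^\eta g(y)|\le C(\eta)\|g'\|_\infty |x-y|^{1-\eta}$, which is the desired $(1-\eta)$-Hölder estimate. There is no genuine obstacle: the only thing to be careful about is that the splitting point must be exactly the scale $h$ of the increment, so that the near-integral does not blow up (one loses a factor $h^{1-\eta}$ rather than $h$) and the far-integral converges (one gains $h^{-\eta}$ from the divergence of $\int \tau^{-1-\eta}$ at $0$). Both contributions have the same homogeneity $h^{1-\eta}$, which is precisely what the scaling of the Marchaud kernel predicts. A brief sanity remark on the limiting cases ($\eta\downarrow 0$ gives the trivial $1$-Lipschitz bound by $\|g'\|_\infty$, and the constant blows up as $\eta\uparrow 1$ matching the loss of any Hölder regularity for the pointwise derivative $g'$) can be included for intuition, but is not needed for the statement.
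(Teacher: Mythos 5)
Your proof is correct and follows essentially the same route as the paper's: both split the integral defining the increment $M^\eta g(x)-M^\eta g(y)$ at the scale $\tau=|x-y|$, bound the second difference in the numerator by $O(\sup|g'|\,\tau)$ near the origin and by $O(\sup|g'|\,|x-y|)$ in the tail, and integrate to get two contributions of homogeneity $|x-y|^{1-\eta}$. The only cosmetic difference is that you first symmetrise the kernel into $\int_0^\infty (g(x+\tau)-g(x-\tau))\tau^{-1-\eta}\,\D\tau$, whereas the paper works directly with the signed integral over $|\tau|>\epsilon$; this changes nothing of substance.
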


\begin{proof}[Proof of Lemma~\ref{AbelMarchaud}]
To show the claim on $M_+^{1/2}( \phi_{x_0,+})(x)$,
we must show that, for any $C^1$ function
$\psi$, compactly supported on a bounded interval $J$, we have
\begin{equation}\label{eqq}
\int_J \psi(x)  M_+^{1/2} \biggl (\frac {\mathbf{1}_{x > x_0}}{\sqrt{x-x_0}}\biggr ) \, \D x
=\sqrt \pi \psi(x_0)\, .
\end{equation}	

We shall use two facts.
On the one hand,  the distributional derivative of the Heaviside $\mathbf{1}_{x > y}$ is the Dirac mass at
$y$, in particular, for any compactly supported $C^1$ function $\psi$, and any bounded (interval $[a,b]$ containing $y$, we have
\begin{equation}\label{Dirr}
\int_y^b  \psi'(x) \, \D t=\int_a^b  \mathbf{1}_{x > y}(x)\psi'(x) \, \D t= -\psi(y)+ \psi(b) \, .
\end{equation}
On the other hand,  in view of Remark~\ref{exchangext},
Lemma~\ref{halff+} gives 
\begin{equation}
	\label{starting}
	I^{1/2}_{+,x} (\phi_{x_0,+} )(x)=
	\frac{1}{\Gamma(1/2)}
	\int_{-\infty}^0 \frac{\phi_{x_0,+}(x+\tau)}{|\tau|^{1/2}} \, \D\tau
	=\sqrt \pi  \mathbf{1}_{ x > x_0}(x)\, .
\end{equation}
We now move on to prove \eqref{eqq}. We have,  recalling \eqref{Mardistr}	
(in other words, integrating by parts with respect to $x$ using Fubini,
before taking the limit
$\epsilon\to 0$),
\begin{align*}
	\int_J  \psi(x) M_{+,x}^{1/2}& \phi_{x_0,+}(x)  \, \D x\\
	&=-\frac{1}{2\sqrt \pi} \int_J\psi'(x)
\lim_{\epsilon \uparrow 0} \int^\epsilon_{-\infty} \frac{\tilde \phi_{x_0,+}(x)- \tilde \phi_{x_0,+}(x+\tau)}{|\tau|^{3/2}} \, \D \tau \, \D x\, ,
\end{align*}
where $\tilde \phi_{x_0,+}(x)=0$ if $x_0>x$ and, otherwise,
\begin{align}\label{otherwise}
	\tilde \phi_{x_0,+}(x)&= \int_{-\infty}^x  \phi_{x_0,+} (y)\, \D y 
= \int_{x_0}^x \frac{1}{\sqrt{y-x_0}} \, \D y=
	2 \sqrt {x -x_0} \, .
\end{align}
Next, for $x_0-x<\epsilon <0$, integrating by parts,
we find,
\begin{align}
\nonumber \int^\epsilon_{-\infty}& \frac{\tilde \phi_{x_0,+}(x)- \tilde \phi_{x_0,+}(x+\tau)}{2|\tau|^{3/2}} \, \D \tau \\
\nonumber&=  2  
\biggl [\int^{x_0-x}_{-\infty} \frac{\sqrt {x -x_0}}{2|\tau|^{3/2}}\, \D \tau
+
	\int^\epsilon_{x_0-x}\frac{\sqrt {x -x_0}- \sqrt {x+\tau -x_0}}{2|\tau|^{3/2}} \, \D \tau \biggr ]\\
\nonumber&=   2 
\biggl [\frac{\sqrt {x-x_0}}{\sqrt{|\tau|}}\bigg |_{\tau=-\infty}^{\tau=x_0-x}
	+ \int^\epsilon_{x_0-x} \frac{ 1}
	{2\sqrt {x+\tau -x_0}}\frac{1}{ |\tau|^{1/2}} \, \D \tau \\
\nonumber&\qquad\qquad\qquad\qquad\qquad\qquad\qquad\qquad+\frac{\sqrt{x-x_0}-\sqrt{x-x_0+\epsilon}}{|\epsilon|^{1/2}} -1
 \biggr ]\\
\nonumber&=   2   
\biggl [1
	+ \int^\epsilon_{-\infty} \frac{\mathbf{1}_{x+\tau>x_0} (x) }
	{2\sqrt {x+\tau -x_0}}\frac{1}{ |\tau|^{1/2}} \, \D \tau +\frac{\sqrt{x-x_0}-\sqrt{x-x_0+\epsilon}}{|\epsilon|^{1/2}}- 1 \biggr ]\\
\nonumber&=
\int^\epsilon_{-\infty} \frac{\mathbf{1}_{x+\tau>x_0} (x) }
	{\sqrt {x+\tau -x_0}}\frac{1}{ |\tau|^{1/2}} \, \D \tau
+2\frac{\sqrt{x-x_0}-\sqrt{x-x_0+\epsilon}}{|\epsilon|^{1/2}}\\
\label{forlater}& =\int^\epsilon_{-\infty} 
\frac{\phi_{x_0,+}(x+\tau)}{ |\tau|^{1/2}} \, \D \tau  +2
\frac{\sqrt{x-x_0}-\sqrt{x-x_0+\epsilon}}{|\epsilon|^{1/2}}\, .
\end{align}
Note that $\lim_{\epsilon\uparrow 0} \frac{\sqrt{x-x_0}-\sqrt{x-x_0+\epsilon}}{|\epsilon|^{1/2}} =0$
for any fixed $x> x_0$. 
Using first 
\eqref{starting},  and then \eqref{Dirr} (recalling that $\psi$ vanishes at the endpoints of $J$), 
we have,
\begin{align*}
	-\frac{1}{\sqrt\pi} &\int_J  \psi'(x)
	\lim_{\epsilon \uparrow 0} \int^\epsilon_{-\infty} \frac{\phi_{x_0,+}(x+\tau)}{ |\tau|^{1/2}}\, \D \tau  \, \D x=
	- \int_J \psi'(x) I^{1/2}_+(\phi_{x_0,+})(x)  \, \D x\\
&\qquad\qquad= -\frac{\pi}{\sqrt\pi}  \int_{J \cap[x_0,\infty)}
	 \psi'(x) \, \D x= \sqrt\pi  \cdot \psi(x_0) \, , 
\end{align*}
which concludes\footnote{In particular we have shown that $M^{1/2}_+(\phi_{x_0,+})= 
d I^{1/2}_+(\phi_{x_0,+})$, as  expected, see Remark~\ref{thesame}.}  the proof of \eqref{eqq} for $M_+^{1/2}(\phi_{x_0,+})$.

For the claim\footnote{In particular, $M^{1/2}_-(\phi_{x_0,-})= 
-d I^{1/2}_-(\phi_{x_0,-})$, as expected, see Remark~\ref{thesame}.} on $M_-^{1/2}(\phi_{x_0,-})$, we use  \eqref{QQM+} and
\begin{equation}
\label{QQM!}
\phi_{x_0,+}(x)=\phi_{x_0,-}(2x_0-x)\, .
\end{equation}

Next, we show the claim on the two-sided  Marchaud derivative
$M^{1/2}( \phi_{x_0,+})(x)$.  We will apply Lemma~\ref{halff}.
We first claim that $ M_{-}^{1/2} (\phi_{x_0,+}-\phi_{x_0,+,\EE})(x)$
is $C^1$ (in fact, $C^\infty$) on $x< x_0+\EE$.
Indeed
\begin{align*}
2\Gamma(1/2) \cdot M^{1/2}_{-} &(\phi_{x_0,+}-\phi_{x_0,+,\EE})(x)\\ &=
\int_0^{\infty}
\frac {\mathbf{1}_{x > x_0+\EE} \cdot \phi_{x_0,+}(x)-
\mathbf{1}_{x+\tau > x_0+\EE} \cdot \phi_{x_0,+}(x+\tau)}{\tau^{3/2}}  \, \D \tau\, .
\end{align*}
If $x<x_0+\EE$, we find
\begin{align}
\nonumber M^{1/2}_{-} (\phi_{x_0,+}&-\phi_{x_0,+,\EE})(x) =-\frac{1}{2\Gamma(1/2)}
\int_{x_0-x+\EE}^{\infty}
\frac { \phi_{x_0,+}(x+\tau)}{\tau^{3/2}}  \, \D \tau\\
\label{recalling} &=-\frac{1}{2\Gamma(1/2)}\int_{x_0-x+\EE}^{\infty}
\frac {1}{\sqrt{x+\tau-x_0} }\frac{1}{ \tau^{3/2}} \, \D \tau 
=: \widetilde G_{\EE}(x-x_0)\, .
\end{align}
Clearly, if $y<\EE/2$,
\begin{align}
\nonumber
| \widetilde G_{\EE}(y)| &=\biggl |\frac{1}{2\Gamma(1/2)}\int_{-y+\EE}^{\infty}
\frac {1}{(y+\tau)^{1/2} \cdot \tau^{3/2}} \, \D \tau\biggr | \\
\label{proptildeGa}
&\le \biggl |\frac{1}{2\sqrt \pi} \frac{1}{(\EE/2)^{1/2}}\int_{-y+\EE}^{\infty}
 \tau^{-3/2}\, \D \tau \biggr |
= \frac{1}{\sqrt \pi} \frac{1}{(\EE/2)^{1/2}}\frac{1}{\sqrt{\EE-y}}
\le \frac{2}{\sqrt \pi} \frac{1}{\EE}\, .
\end{align}
We next focus on $ M_{-}^{1/2} (\phi_{x_0,+,\EE})(x)$.
Just like in the proof of \eqref{eqq},
taking a $C^1$ function $\psi$ compactly supported
in an interval $J$, we integrate by parts:
\begin{align*}
	\int_J \psi(x) M_{-}^{1/2}& (\phi_{x_0,+,\EE})(x) \, \D x\\&=-\frac{1}{2\Gamma(1/2)} \int_J\psi'(x)\lim_{\epsilon \downarrow 0} 
\int_\epsilon^{\infty} \frac{\tilde \phi_{x_0,\EE}(x)- \tilde \phi_{x_0,\EE}(x+\tau)}{\tau^{3/2}} \, \D \tau  \, \D x\, ,
\end{align*}
where $\tilde \phi_{x_0,\EE}(x)=0$ if $x<x_0$  and, otherwise, 
\begin{align}
\nonumber	\tilde \phi_{x_0,\EE}(x)&= \int_{-\infty}^x  \phi_{x_0,+,\EE} (y)\, \D y = \int_{-\infty}^x \frac {\mathbf{1}_{x_0+\EE>y > x_0}}{\sqrt{y-x_0}}\,  \D y\\
\label{otherwise2} &= \int_{x_0}^{\min(x,x_0+\EE) }\frac{1}{\sqrt{y-x_0}} \, \D y=
\begin{cases}
2\sqrt \EE\mbox{ if } x > x_0+\EE\, ,\\
	2 \sqrt {x -x_0} &\mbox{ if } x<x_0+\EE\, .
\end{cases}
\end{align}
Next, integrating by parts again,
we find for $x\in (x_0,x_0+\EE)$, 
that, for any $0<\epsilon< x_0-x+\EE$,
\begin{align*}
&-\int_\epsilon^{\infty} \frac{\tilde \phi_{x_0,\EE}(x)- \tilde \phi_{x_0,\EE}(x+\tau)}{2\tau^{3/2}} \, \D \tau\\
& = - 2
	\int_\epsilon^{x_0-x+\EE}\frac{\sqrt {x -x_0}- \sqrt {x+\tau -x_0}}{2\tau^{3/2}} \, \D \tau
- 2 \int_{x_0-x+\EE}^{\infty}\frac{\sqrt {x -x_0}-\sqrt \EE}{2\tau^{3/2}} \, \D \tau \\
& =   2
	\biggl [\int_\epsilon^{x_0-x+\EE} \frac{ 1}
	{2\sqrt {x+\tau -x_0}}\frac{1}{ \tau^{1/2}} \, \D \tau
 + \frac{\sqrt{x-x_0}-\sqrt \EE}{\sqrt{x_0-x+\EE}}
-\frac{\sqrt{x-x_0}-\sqrt {x-x_0+\epsilon}}{\sqrt{\epsilon}}\\
&\qquad\qquad\qquad\qquad\qquad\qquad\qquad\qquad\qquad
-\frac{\sqrt{x-x_0}-\sqrt \EE}{\sqrt{x_0-x+\EE}} \biggr ]\\
&=\int_\epsilon^{x_0-x+\EE} \frac{\phi_{x_0,+}(x+\tau)}{ \tau^{1/2}} \, \D \tau
-2\frac{\sqrt{x-x_0}-\sqrt {x-x_0+\epsilon}}{\sqrt{\epsilon}}\, .
\end{align*}
As $\epsilon \downarrow 0$, the right hand-side above
 tends to 
$\int_0^{\infty} \frac{\phi_{x_0,+,\EE}(x+\tau)}{ \tau^{1/2}} \, \D \tau$ for any fixed $x<x_0$.

If $x<x_0$, we find for any $0<\epsilon <x_0-x$, 
\begin{align}
\nonumber \int_{\epsilon}^{\infty} &\frac{\tilde \phi_{x_0,\EE}(x)- \tilde \phi_{x_0,\EE}(x+\tau)}{2\tau^{3/2}} \, \D \tau\\
\nonumber &
=  2   
	\int_{x_0-x}^{x_0-x+\EE}\frac{ \sqrt {x+\tau -x_0}}{2\tau^{3/2}} \, \D \tau +
2 \int_{x_0-x+\EE}^\infty \frac{\sqrt \EE }{2 \tau^{3/2}}
\, \D \tau \\
\nonumber  & =2
	\biggl [\int_{x_0-x}^{x_0-x+\EE} \frac{ 1}
	{2\sqrt {x+\tau -x_0}}\frac{1}{ \tau^{1/2}} \, \D \tau 
 -  \frac{\sqrt \EE}{\sqrt{x_0-x+\EE}} +  \frac{\sqrt \EE}{\sqrt{x_0-x+\EE}}\biggr ] \\
\label{recalling3}&=
\int_{0}^{x_0-x+\EE} \frac{\phi_{x_0,+}(x+\tau)}{ \tau^{1/2}} \, \D \tau = \int_{0}^{\infty} \frac{\phi_{x_0,+,\EE}(x+\tau)}{ \tau^{1/2}} \, \D \tau \, .
\end{align}
So, recalling the definition of  $I^{1/2}_-$,
for any $x < x_0+\EE$, we have
\begin{align*}
-\frac{1}{2 \Gamma(1/2)} \lim_{\epsilon \downarrow 0}& \int_{\epsilon}^{\infty} \frac{\tilde \phi_{x_0,\EE}(x)- \tilde \phi_{x_0,\EE}(x+\tau)}{\tau^{3/2}} \, \D \tau= I^{1/2}_- (\phi_{x_0,+,\EE})(x) \, .
\end{align*}
Summarising, and recalling \eqref{recalling},
 we have shown that
 if
$J \cap [x_0+\EE/2, \infty)=\emptyset$, then
\begin{align}
\label{recalling3b}
\int_J \psi(x) M^{1/2}_+(\phi_{x_0,+})(x) \, \D x=
-\int_J \psi'(x) I^{1/2}_+(\phi_{x_0,+})(x)\, \D x=\sqrt \pi \psi(x_0)\, ,
\end{align}
and\footnote{In particular, $M^{1/2}_-(\phi_{x_0,+,\EE})= 
-d I^{1/2}_-(\phi_{x_0,+,\EE})$, as  expected.}
\begin{align*}
\int_J \psi(x) M^{1/2}_-(\phi_{x_0,+})(x) \, \D x&=
\int_J \psi(x) \widetilde G_{\EE}(x-x_0) \, \D x+
\int_J \psi(x) M^{1/2}_-(\phi_{x_0,+,\EE})(x) \, \D x\\
&=\int_J \psi(x) \widetilde G_{\EE}(x-x_0)\,  \D x
+\int_J \psi'(x) I^{1/2}_- (\phi_{x_0,+,\EE})(x)\,  \D x\, .
\end{align*}
Next, by Remark~\ref{exchangext},
Lemmas~\ref{halff+} and ~\ref{halff} give  
for $x_0-\EE/2<x < x_0+\EE/2$ that
\begin{align}
\nonumber	I^{1/2}_{-,x} (\phi_{x_0,+,\EE} )(x)&=
 I^{1/2}_{+,t}(\phi_{x_0+t,+,\EE} )(x)|_{t=0} 
\\
\nonumber&= 
\frac{1}{\sqrt{\pi}}\bigl ( 
-
 \log |x- x_0| +\log \EE+
G_{\EE}(x_0-x)\bigr ) \\
\label{recalling4}&= 
\frac{1}{\sqrt{\pi}}\bigl (
-
 \log |x- x_0| +\log \EE+
G_{\EE}(x_0-x)\bigr )
\,  ,
\end{align}
where $y\mapsto G_{\EE}(y)$ is analytic, with 
\begin{equation}
\label{propGa}
\lim_{\EE\to\infty} \sup_{|y|<\EE/2} |\partial_y G_{\EE}(y)|=0 \, ,
\,\, \,\, 
\sup_{\EE>1} \, \sup_{|y|<\EE/2} \max( |\partial_y G_{\EE}(y)|,
 |\partial^2_y G_{\EE}(y)|)<\infty \, ,
\end{equation}

Recalling that $\psi$ is $C^1$ and vanishes at the endpoints of $J$, 
we have shown that
\begin{align}
 \nonumber 2\int_J  \psi(x) M^{1/2} &(\phi_{x_0,+})(x) \, \D x=
\int_J \psi(x)  \bigl (M^{1/2}_+ (\phi_{x_0,+}(x) -
 M^{1/2}_- (\phi_{x_0,+}(x))\bigr ) \,  \D x\\
\nonumber	&= 
\sqrt \pi \psi(x_0)
-\int_J  \psi(x) \widetilde G_{\EE}(x-x_0)\, \D x
\\ 
\nonumber&\qquad \qquad\qquad
-
\int_J \frac{\psi'(x)}{\sqrt\pi}
	\bigl [ -\log |x-x_0| +\log \EE+ G_{\EE}(x_0-x)
\bigr ]\,  \D x
\\
\label{recalling2}&=
\sqrt \pi\psi(x_0)
+\int_J  \psi(x) \bigl  [-\widetilde G_{\EE}(x-x_0)
+\frac{\partial_x G_{\EE}(x_0-x) }{\sqrt\pi}
 \bigr ]\, \D x
\\
\nonumber &\qquad\qquad\qquad\qquad\qquad\qquad\qquad\qquad\qquad
 - \frac{1}{\sqrt\pi}\int_J  \frac{\psi(x)}{x-x_0}\,  \D x  \, , 
\end{align}
if $\EE$ is large enough (depending only on $x_0$ and $J$).
Since the left-hand side above is independent of $\EE$,  the function
$$\GG(x-x_0):=-\widetilde G_{\EE}(x-x_0)
+{\partial_x G_{\EE}(x_0-x) }/{\sqrt\pi}$$ does not depend on $\EE$.
By \eqref{proptildeGa} and the first claim of \eqref{propGa},  we get 
$\GG(y)=0$. This   establishes  the claim on
$M^{1/2}_-(\phi_{x_0,+})$ and  the two-sided half derivative $M^{1/2}(\phi_{x_0,+})$. 
The claim on
$M^{1/2}( \phi_{x_0,-})(x)$ then  follows from
 \eqref{QQM} and \eqref{QQM!}.

Finally, the claims on truncated spikes $\phi_{x_0,\sigma,\EE}$
follow from \eqref{recalling} and \eqref{recalling2} combined with the fact that if $\sigma (x - x_0)<\EE$ then
$
M^{1/2}_\sigma(\phi_{x_0,\sigma}-\phi_{x_0,\sigma,\EE})(x)= 0 
$.
\end{proof}

\begin{proof}[Proof of Lemma~\ref{AbelMarchaud2}]
In view of \eqref{starting}, to show the claim on  $M_{+}^{1/2} ({\mathbf{1}_{x> x_0}}({\sqrt{x-x_0}}))$,  it is enough to check that
for any continuous $\psi$ vanishing at the endpoints of $J$, 
\begin{align*}
	\frac{1}{2\sqrt\pi} \int_J  \psi(x)
	\lim_{\epsilon \uparrow 0} \int^\epsilon_{-\infty}& \frac{{\mathbf{1}_{x> x_0}}({\sqrt{x-x_0}})
-{\mathbf{1}_{x+\tau> x_0}}({\sqrt{x+\tau-x_0}})}{ |\tau|^{3/2}}\, \D \tau  \, \D x\\
&=\frac{1}{2} \int_J \psi(x) I^{1/2}_+(\phi_{x_0,+})(x)  \, \D x \, .
\end{align*}
The above  follows from \eqref{forlater} and \eqref{otherwise}.
The claim on  $M_{-}^{1/2} ({\mathbf{1}_{x< x_0}}({\sqrt{x_0-x}}))$
then follows immediately from \eqref{QQM+} and \eqref{QQM!}.

For the  two-sided half derivative of truncated square roots,  
we  note that 
$$
M^{1/2}_\sigma(\bar \phi_{x_0,\sigma}-\bar \phi_{x_0,\sigma,\EE})(x)= 0 
\mbox{ if }\sigma (x - x_0)<\EE
\, .
$$
The claim on $M^{1/2}(\bar \phi_{x_0,\sigma,\EE})$ then follows from\eqref{recalling3}, \eqref{recalling3b}, and \eqref{recalling4}.
\end{proof}

\begin{proof}[Proof of Lemma~\ref{AbelMarchaud3}]
Since $\eta \in (0,1)$, for any $h\in \real$, we have
\begin{align*}
|M^\eta g(x+h)&-M^\eta g(x)|\\
&\le \frac{\eta}{2\Gamma(1-\eta)}\lim_{\epsilon\downarrow 0} \int_{|\tau |>\epsilon} \frac{|g(x+h+\tau)-g(x+h)-g(x+\tau)+g(x)|}
{|\tau|^{1+\eta} } \, \D \tau \\
&\le  \frac{\eta }{\Gamma(1-\eta)}\sup|g'|
\biggl (\lim_{\epsilon\downarrow 0} \int_\epsilon^{|h |}
\frac{\tau}
{\tau^{1+\eta} } \, \D \tau
+ \int_{|h|}^\infty\frac{|h|} {\tau^{1+\eta} } \, \D \tau \biggr )\\
&= \frac{\eta}{\Gamma(1-\eta)} \sup|g'|
\biggl (
\frac{|h|^{1-\eta}}{1-\eta}+\frac{|h|^{1-\eta}}{\eta}
\biggr ) \, .
\end{align*}
\end{proof}

\section{Rigorous results on fractional susceptibility functions}
\label{rigorr}

Before stating
Theorem~\ref{WTF}  in \S\ref{state} and proving it in \S\ref{WTFWTF},
we recall an expansion  for the invariant density $\rho_t$ due to Ruelle
in \S\ref{dealing}, and prove some of its consequences.

\subsection{Ruelle's formula for $\rho_t$. 
Fractional integration by parts. Exponential bounds. Proof of Proposition~\ref{CC2}}
\label{dealing}

Let $f(x)=f_{t}(x)=t-x^2$ for $t\in\MT$, let
$c_k=c_{k,t}$ and recall the sequence $s_k=s_{k,t}$
from \eqref{sigmak}.
The starting point for the proof below of our main Theorem~\ref{WTF}  is the expansion given by Ruelle  \cite[Theorem 9,
Remark 16A]{Ru} 
(in the slightly more general analytic   Misiurewicz setting)
for  the invariant density $\rho_{t}$ of $f_{t}$,  supported
in $[c_2,c_1]$:
\begin{align}\label{Ruellerho}
\rho_{t}(x)&=\psi_0(x) + \sum_{k = 1}^{\infty}
C^{(0)}_k 
\frac{\mathbf{1}_{ w_{0}<s_{k-1} (x-c_k)<0}}
{\sqrt{|x- c_k|}}\\
\nonumber &\qquad\qquad\qquad+ \sum_{k = 1}^{\infty}
C_k^{(1)} \cdot 
\mathbf{1}_{ w_{1}<s_{k-1} (x- c_k)<0} \cdot \sqrt{|x- c_k|}
\, ,
\end{align}
where\footnote{The cutoff is slightly different in  Ruelle  \cite[Theorem 9,
Remark 16A]{Ru}, who observes that ``other choices can be useful.''} $\psi_0$ is a $C^1$ function, 
$w_{1}<0$, $w_{0}<0$, 
and where (for some $U_{t}\ne 0$)
\begin{equation*}
C^{(0)}_k=\frac{\rho_{t}(0)}
{|D f^{k-1}_{t}(c_1)|^{1/2}}\, , 
\quad |C_k^{(1)} |\le \frac{U_{t}}
{|D f^{k-1}_{t}(c_1)|^{3/2}}\, ,
\quad \forall k\ge 1\, .
\end{equation*}
Since $t \in \MT$,  we have  $c_{k+P}=c_k$ for
$k\ge L$. Note also  that, if $D f^P_t (c_{L})>0$, then the spikes and square roots along the postcritical orbit are all one-sided. If $D f^P_t(c_{L})<0$ then
the spikes and square roots along the periodic part of the postcrititical orbit are all two-sided.

\begin{remark}\label{BS12exp}	For more general TSR parameters, one could
use  \cite[Prop 2.7]{BS2}   instead  of \eqref{Ruellerho}.
(To obtain an  expansion involving  spikes and square roots in the TSR setting,
one could upgrade the results of \cite{BS2}, showing that if
$f_t$ is smooth enough then the smooth component of $\rho_t$ belongs to 
$W^r_1$ for large enough $r>2$.)
\end{remark}

In the remainder of this section, we show three consequences of  \eqref{Ruellerho}.

First, we show 
that the  integration by parts formula \eqref{identity} holds for all $\eta\in (0,1)$
(this will be used to prove Proposition~\ref{CC2} which implies Proposition~\ref{propE}):

\begin{lemma}[Fractional integration by parts in the response susceptibility]\label{identity1}
Let $t\in \MT$.
 For any $\eta \in (0,1)$ and any compactly supported $\phi \in C^1$, we have, as formal power series,
\begin{equation*}
\sum_{k=0}^\infty  z^k \int M^\eta_x   (\phi \circ f_{t}^k)
\cdot
 \rho_{t}  
\, \D x
= 
-\sum_{k=0}^\infty  z^k \int  (\phi \circ f_{t}^k)
\cdot
M^\eta_x  \bigl ( \rho_{t}  \bigr )
\, \D x  \,  .
\end{equation*}
(By definition, the left-hand side above is just $\Psi^{\mathrm{rsp}}_\phi(\eta,z)$.)
\end{lemma}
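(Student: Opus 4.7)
Since both sides are sums over $k\ge 0$, it suffices to prove, for each $k$ and $g_k:=\phi\circ f_{t_0}^k$,
\begin{equation*}
\int M^\eta_x g_k\cdot\rho_{t_0}\,\D x=-\int g_k\cdot M^\eta_x\rho_{t_0}\,\D x.
\end{equation*}
Because $\phi\in C^1$ is supported in $[-2,2]$ and $|f_{t_0}(x)|\to\infty$ as $|x|\to\infty$, the function $g_k$ is $C^1$ with compact support on $\real$. For $\eta\in(0,1/2)$ the identity is the classical fractional integration-by-parts formula \cite[(6.27)]{sam}, valid because $M^\eta\rho_{t_0}\in L^r_{loc}$ for some $r>1$ (see the footnote after \eqref{identity}) and both pairings are then ordinary Lebesgue integrals. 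I describe the argument for the substantive case $\eta\in[1/2,1)$, where $M^\eta\rho_{t_0}$ only makes sense as a distribution of order one in the sense of Definition~\ref{ddistr}.

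For every $\epsilon>0$, both $M^\eta_\epsilon g_k$ and $M^\eta_\epsilon\rho_{t_0}$ are absolutely convergent integrals, and a Fubini computation (after the change of variable $y=x+\tau$ in $\int g_k(x+\tau)\rho_{t_0}(x)\,\D x$ and $\tau\mapsto-\tau$, which flips $\sgn(\tau)$) delivers the regularized identity
\begin{equation*}
\int M^\eta_\epsilon g_k\cdot\rho_{t_0}\,\D x=-\int g_k\cdot M^\eta_\epsilon\rho_{t_0}\,\D x.
\end{equation*}
Lemma~\ref{AbelMarchaud3} applied to $g_k$ yields an $L^\infty$-bound on $M^\eta_\epsilon g_k$ uniform in $\epsilon$ together with pointwise convergence to $M^\eta g_k$, so dominated convergence against $\rho_{t_0}\in L^1$ sends the left-hand side to $\int M^\eta g_k\cdot\rho_{t_0}\,\D x$ as $\epsilon\downarrow 0$. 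For the right-hand side, write $R_{t_0}(x):=\int_{-\infty}^x\rho_{t_0}(u)\,\D u$; the identity $M^\eta_\epsilon\rho_{t_0}=\partial_x M^\eta_\epsilon R_{t_0}$ follows from Fubini at fixed $\epsilon>0$. Integration by parts, legitimate because $g_k\in C^1_c$, gives $-\int g_k\cdot M^\eta_\epsilon\rho_{t_0}\,\D x=\int g_k'\cdot M^\eta_\epsilon R_{t_0}\,\D x$, and by Definition~\ref{ddistr} this last expression tends to $-\int g_k\cdot M^\eta\rho_{t_0}\,\D x$ as $\epsilon\downarrow 0$ provided $\lim_\epsilon M^\eta_\epsilon R_{t_0}\in L^1_{loc}$.

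To verify this last condition I plug in the Ruelle decomposition \eqref{Ruellerho}: $R_{t_0}$ is a sum of a $C^2$ function, an absolutely convergent series of truncated $\sqrt{|x-c_k|}$-terms (primitives of the half-spikes), and an absolutely convergent series of truncated $|x-c_k|^{3/2}$-terms (primitives of the truncated square roots). Lemma~\ref{AbelMarchaud3} handles the $C^2$ and $|x-c_k|^{3/2}$ contributions; an Abel-type computation in the spirit of Lemma~\ref{AbelMarchaud2} handles the $\sqrt{|x-c_k|}$ contributions, producing a $\log|x-c_k|$-singular (at $\eta=1/2$) or $|x-c_k|^{1/2-\eta}$-singular (at $\eta>1/2$) profile, locally integrable in either case; the series converges in $L^1_{loc}$ by \eqref{magicalR} and the Collet--Eckmann hypothesis. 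The main obstacle will be making this step quantitative, so that $\lim_{\epsilon\downarrow 0}$ and $\sum_k$ can be interchanged, i.e.\ securing summable-in-$k$, uniform-in-$\epsilon$ $L^1_{loc}$-bounds on $M^\eta_\epsilon$ applied to each truncated square root. A secondary technical point is that Lemma~\ref{AbelMarchaud2} is stated only at $\eta=1/2$, so a parallel Abel-type calculation for $\eta\in(1/2,1)$, modelled on the same architecture but with the exponents tracked, would be needed.
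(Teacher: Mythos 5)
Your regularise-then-pass-to-the-limit strategy is genuinely different from the paper's argument, which never takes $\epsilon\to 0$ limits of $M^\eta_\epsilon \rho_{t_0}$ paired against test functions. The paper instead (a) writes $2M^\eta_x\psi=(I^{1-\eta}_++I^{1-\eta}_-)\psi'$ for the smooth compactly supported test function (Remark~\ref{thesame}), so that the left-hand side becomes $\int \psi'\cdot (I^{1-\eta}_++I^{1-\eta}_-)(\rho_{t_0})\,\D x$ by the classical fractional integration by parts for Riemann--Liouville integrals \cite[(5.16)]{sam}; and (b) unwinds Definition~\ref{ddistr} on the right-hand side and integrates by parts in $\tau$, converting $\int (G(x+\tau)-G(x))|\tau|^{-1-\eta}\sgn(\tau)\,\D\tau$ into the absolutely convergent Riesz-potential expression $\eta^{-1}\int \rho_{t_0}(x+\tau)|\tau|^{-\eta}\,\D\tau$. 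Both sides then coincide with no limit interchange at all; the only structural input on $\rho_{t_0}$ is the check that $\lim_\epsilon M^\eta_\epsilon G\in L^1_{loc}$ (so the right-hand side is well defined), done via Lemma~\ref{AbelMarchaud2} at $\eta=1/2$ and, for $\eta\in(1/2,1)$, via the semigroup decomposition $M^\eta_\pm=M^{\eta-1/2}_\pm\circ M^{1/2}_\pm$ \cite[Property 2.4]{Kilbas}. That decomposition also disposes of your ``secondary technical point'': no new Abel-type computation for $\eta\ne 1/2$ is required.

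As written, however, your proof is incomplete: you yourself label the uniform-in-$\epsilon$ $L^1_{loc}$ domination of $M^\eta_\epsilon R_{t_0}$ ``the main obstacle'' and leave it open, and without it the passage to the limit on the right-hand side (which needs more than pointwise convergence to an $L^1_{loc}$ limit) is unjustified precisely in the range $\eta\in[1/2,1)$ where the naive H\"older bound on the truncated square roots diverges. Half of that worry is illusory: since $t_0\in MT$ the postcritical orbit is finite ($c_{k+p}=c_k$ for $k\ge L$), so after summing the geometric coefficients \eqref{magicalR} the expansion \eqref{Ruellerho} exhibits $R_{t_0}$ as a $C^2$ function plus a \emph{finite} sum of truncated $|x-c_j|^{1/2}$ and $|x-c_j|^{3/2}$ terms --- there is no $\sum_k$ versus $\lim_\epsilon$ interchange to control. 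What remains is a genuine, local piece of unfinished work: an $\epsilon$-uniform, locally integrable dominating function for $M^\eta_\epsilon$ of a single truncated square root when $\eta\ge 1/2$, which requires exploiting cancellation (the logarithmic, resp.\ $|x-c_j|^{1/2-\eta}$, profile) rather than soft bounds. That is exactly the content the paper packages into Lemma~\ref{AbelMarchaud2} together with the semigroup property; importing those two ingredients would close your argument, but as it stands the key analytic step is asserted rather than proved.
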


\begin{proof}
Fix $\eta \in (0,1)$.
It suffices to show that, for any compactly supported
$\psi \in C^1$, we have 
\begin{equation*}
 \int M^\eta_x ( \psi)(x)
\cdot
 \rho_{t}(x)  
\, \D x
= 
-\int  \psi(x)
\cdot
M^\eta_x   ( \rho_{t}   ) (x)
\, \D x  \,  ,
\end{equation*}
where $M^\eta_x  ( \rho_{t}   )$ is understood in the sense of distributions (of order one).

Since $\psi$ is $C^1$ and compactly supported, we have
\begin{equation}\label{neat}
2 M^\eta_x ( \psi)(x)
=\partial_x ( (I^{1-\eta}_++I^{1-\eta}_-)\psi)(x)=  (I^{1-\eta}_++I^{1-\eta}_-) \psi'(x)\, .
\end{equation}
(Use Remark~\ref{thesame} for the first equality and the definition of $I^{1-\eta}_\pm$ for the second.)
Next, using the expansion \eqref{Ruellerho} for $g(x):=\rho_{t}(x)$,  
we find that 
 $G(y):=\int_{-\infty}^y g(u) \, \D u$ is  the sum of a $C^1$ function with a (finite) sum of one- or two-sided  truncated square roots
along the postcritical orbit (see 
\eqref{otherwise2}). Thus (recalling \eqref{Meps})
$$\lim_{\epsilon \to 0}  
M^\eta_\epsilon G(x)=M^\eta G(x)\in L^1_{loc} \, .
$$
The above claim is clear if $\eta <1/2$. For $\eta=1/2$, it follows from
Lemma~\ref{AbelMarchaud2}. Finally, for $\eta \in (1/2,1)$, we may decompose
$M^\eta_\pm=M^{\eta-1/2}_\pm \circ M^{1/2}_\pm$, using the semigroup property
(\cite[Property 2.4]{Kilbas},
for\footnote{The reference to Lemma 2.4  there should be replaced by Lemma 2.5.} $m=1$ and $\alpha=\eta-1/2$,
noting that  $G\in L^1$ and $I^{3/2-\eta}_\pm (G)\in AC$ since $3/2-\eta>1/2$).

Now, on the one hand, by definition, we have
\begin{align}
\nonumber 
&2 \int  \psi(x)  (M^\eta g)(x)\, \D x
=-2 \int  \psi'(x) \cdot M^\eta G(x) 
 \, \D x \\
\nonumber
&\qquad\qquad\quad= \int  \psi'(x)\bigl [  \frac{\eta}{ \Gamma(1-\eta)}
\int
\frac {G(x+\tau)-G(x)}{|\tau|^{1+ \eta}} 
\sgn(\tau) \, \D \tau \bigr ] 
 \, \D x\\
\label{Mardistr'}&\qquad\qquad\quad= \int  \psi'(x)\bigl [ \frac{\eta}{ \Gamma(1-\eta)}
\int
\frac {\int_{x}^{x+\tau} g(u) \, \D u}{|\tau|^{1+ \eta}} 
\sgn(\tau) \, \D \tau \bigr ] 
 \, \D x \, ,
\end{align}
where \eqref{Mardistr'} can be rewritten, integrating by parts in $\tau$, as
\begin{align*}
\int  \psi'(x)  \frac{1}{ \Gamma(1-\eta)}
\int
\frac { g(x+\tau) }{|\tau|^{\eta}} 
 \, \D \tau  
 \, \D x\, .
\end{align*}
\iffalse
 \begin{align*}
-\int  \psi'(x)&\bigl [ \lim_{\epsilon \to 0} \frac{\eta}{ \Gamma(1-\eta)}
\int_{|\tau|>\epsilon}
\frac {\int_{x}^{x+\tau} g(u) \, \D u}{|\tau|^{1+ \eta}} 
\sgn(\tau) \, \D \tau \bigr ] 
 \, \D x \\
&=-\int  \psi'(x)\bigl [ \lim_{\epsilon \to 0} \frac{1}{ \Gamma(1-\eta)}
\int_{|\tau|>\epsilon}
\frac { g(x+\tau) }{|\tau|^{\eta}} 
 \, \D \tau \bigr ] 
 \, \D x\\
&\qquad 
+\int  \psi'(x)\bigl [ \lim_{\epsilon \to 0} \frac{1}{ \Gamma(1-\eta)}
\frac { \int_{x-\epsilon}^{x+\epsilon} g(u) \, \D u }{|\epsilon|^{\eta}} 
  \bigr ] 
 \, \D x \, .
\end{align*}
\fi
On the other hand,  \eqref{neat} followed by fractional integration by parts \cite[(5.16)]{sam} gives
\begin{align*}
2 \int M^\eta_x ( \psi)(x)
\cdot
 g  (x)
\, \D x
&=  \int  (I^{1-\eta}_++I^{1-\eta}_-) (\psi') (x)  \cdot
 g (x)
\, \D x \\
&= \int\psi'  (x)  \cdot
  (I^{1-\eta}_++I^{1-\eta}_-) ( g)(x) 
\, \D x\\
&= \int\psi'  (x)  \cdot
\frac{1}{\Gamma(1-\eta)} \int 
 \frac{   g(x+\tau) }{|\tau|^\eta} \D \tau 
\, \D x
\, .
\end{align*}
\end{proof}

Next, we use\footnote{It would probably be possible to
apply \cite[Thm 2.II.b)]{Yo2} instead.} 
the expansion \eqref{Ruellerho} to get  the following exponential bounds, useful to prove Theorem~\ref{WTF}:
\begin{lemma}
[Action of the transfer operator on Sobolev spaces $H^r_q$ for $r>0$]\label{A3}
Let $t\in \MT$ be a mixing parameter. Let $r>0$, $q>1$.
 There exist $C<\infty$ and $\kappa<1$ such that,
for any\footnote{If $\supp(\psi)\subset I_t$, the first term   is
$\int \varphi( x) \LL^j_t( \psi (x))\, \D x$,
thus the name of the  lemma.} $\psi \in H^r_q[-2,2]$ 
and any bounded 
$\varphi$ supported in $[-2,2]$
$$
|\int \varphi( f^j_t(x)) \,  \psi (x)\, \D x -
\int \varphi(x) \, \D\mu_{t} \cdot \int_{I_t} \psi \, \D x |
\le C \|\varphi\|_{L^\infty}  \|{\psi} \|_{H^r_q} \kappa^j  \, ,\forall j\ge 0 \, .
$$
\end{lemma}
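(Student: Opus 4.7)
The plan is to reduce the estimate to a decay-of-correlations statement provided by a spectral gap of $\LL_{t_0}$ on a suitable Banach space of functions. For a mixing MT parameter $t_0$, the tower construction in the spirit of Keller--Nowicki (and as used in \cite{BS2, BBS}) produces a Banach space $B$ of functions on $[-2,2]$ such that $\LL_{t_0}:B\to B$ is bounded and quasi-compact, with $1$ a simple leading eigenvalue (eigenvector $\rho_{t_0}\in B$, which is consistent with the expansion \eqref{Ruellerho}), the rest of the spectrum contained in a disc of radius $\kappa_0<1$, and $B\hookrightarrow L^1[-2,2]$ continuously. Moreover, for any $r>0$ and $q>1$ the Sobolev space $H^r_q[-2,2]$ embeds continuously in $B$, with $\|\psi\|_B\le C_{r,q}\|\psi\|_{H^r_q}$.

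The reduction step is essentially algebraic. Setting $c_\psi:=\int\psi\,\D x$ and $\tilde\psi:=\psi-c_\psi\rho_{t_0}$, we have $\int\tilde\psi\,\D x=0$, and since $\LL_{t_0}\rho_{t_0}=\rho_{t_0}$,
\begin{align*}
\int \varphi(x)\,\LL_{t_0}^j(\psi)(x)\,\D x \;-\; \int \varphi\,\D\mu_{t_0}\cdot\int\psi\,\D x \;=\; \int \varphi\cdot \LL_{t_0}^j\tilde\psi\,\D x,
\end{align*}
where I used $\int\varphi\rho_{t_0}\,\D x=\int\varphi\,\D\mu_{t_0}$. Since $\psi$ is supported in $[-2,2]$ and $q>1$, Hölder's inequality gives $|c_\psi|\le C\|\psi\|_{L^q}\le C'\|\psi\|_{H^r_q}$, so $\|\tilde\psi\|_B\le \|\psi\|_B + |c_\psi|\,\|\rho_{t_0}\|_B\le C''\|\psi\|_{H^r_q}$.

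The last step invokes the spectral gap: $\tilde\psi$ has zero Lebesgue integral, hence lies in the spectral complement of the leading eigenspace spanned by $\rho_{t_0}$, so $\|\LL_{t_0}^j\tilde\psi\|_B\le C\kappa^j\|\tilde\psi\|_B$ for any fixed $\kappa\in(\kappa_0,1)$. Combining with $B\hookrightarrow L^1$ yields
\begin{align*}
\Big|\int \varphi\cdot \LL_{t_0}^j\tilde\psi\,\D x\Big| \;\le\; \|\varphi\|_{L^\infty}\,\|\LL_{t_0}^j\tilde\psi\|_{L^1} \;\le\; C'''\|\varphi\|_{L^\infty}\,\kappa^j\,\|\psi\|_{H^r_q},
\end{align*}
which is the claimed bound.

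The main obstacle is the identification of a Banach space $B$ for which both (i) a Lasota--Yorke/quasi-compactness argument can be pushed through on the Markov extension of an MT quadratic map, and (ii) the continuous embedding $H^r_q[-2,2]\hookrightarrow B$ holds for arbitrary $r>0$, $q>1$. For $r\ge 1$ any BV-type space on the tower suffices, so the delicate case is $r\in(0,1)$: here one can either use a fractional Hölder / Besov space on the tower, controlling its norm by $\|\psi\|_{H^r_q}$ via a Sobolev--Besov embedding together with the uniform bounded multiplicity of the tower projection, or alternatively interpolate between the $L^q$ and $W^{1,q}$ spectral bounds for $\LL_{t_0}$ already present in the MT setting. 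Once the spectral framework is in place (essentially as in \cite{BS2, BBS}), the argument above is routine.
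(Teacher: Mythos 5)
Your algebraic reduction (splitting off $c_\psi\rho_{t_0}$ and applying a spectral gap to the zero-mean part) is fine as far as it goes, but the whole weight of the argument rests on the existence of a Banach space $B$ with the three properties you list, and that is a genuine gap, not a routine step. The lemma must apply to every $\psi\in H^r_q[-2,2]$ with $r>0$ arbitrarily small and $q>1$ arbitrarily close to $1$ --- in particular to Heaviside functions, which is exactly how the lemma is used in the paper. The spaces on which a spectral gap is actually known for the transfer operator of an MT (or TSR) quadratic map --- the modulus-of-continuity spaces on towers used in \cite{BS2, BBS}, or the piecewise-$C^1$ space $\BB_t$ obtained after the conjugacy $\Lambda_t$ of Remark~\ref{notn} --- do not contain such rough functions; conversely, spaces that do contain them (e.g.\ $BV$) do not receive a continuous embedding from $H^r_q$ when $r$ is small, and $\rho_{t_0}$ itself fails to be in $BV$ because of the square-root spikes. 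Your fallback suggestions do not repair this: interpolation between $L^q$ and $W^{1,q}$ fails because $\LL_{t_0}$ has no spectral gap on $L^q$ (its spectrum there fills the unit disc), and a Lasota--Yorke inequality on a fractional Besov space over the Markov extension of a quadratic map is not available in the cited literature and would be a substantial result in its own right.

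The paper avoids constructing any such space. It mollifies: $\psi$ is approximated by $C^1$ functions $\psi_\epsilon$ with $\|\psi_\epsilon\|_{C^1}\lesssim \epsilon^{-2}\|\psi\|_{H^r_q}$ and $\|\psi-\psi_\epsilon\|_{L^q}\lesssim \epsilon^{r}\|\psi\|_{H^r_q}$; then $\psi_\epsilon/\rho_{t_0}\in BV$ (using $\inf_{[c_2,c_1]}\rho_{t_0}>0$ and the structure \eqref{Ruellerho} of $\rho_{t_0}$), so the Keller--Nowicki decay of correlations for $BV$ observables against $\D\mu_{t_0}$ gives a bound of order $\epsilon^{-2}\theta^j$ for the mollified correlation, while the replacement error is $O(\epsilon^{r})$; choosing $\epsilon=\theta^{j/(r+2)}$ balances the two and yields the rate $\kappa=\theta^{r/(r+2)}$. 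If you want to salvage your approach, you must either carry out such an approximation argument with a $j$-dependent mollification parameter, or actually prove quasi-compactness of $\LL_{t_0}$ on a space containing $H^r_q$ for all $r>0$, $q>1$; the former is what the paper does.
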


Lemma~\ref{A3}  applies to the
Heaviside function $\psi=\mathbf{1}_{x>y}$ .

\begin{proof}[Proof of Lemma~\ref{A3}]
Since $q>1$ and we are in a one-dimensional setting, the Sobolev embeddings imply that, for any $\tilde r >2$ (we may choose $\tilde r<2+r$),
there exists $\tilde C$ such that for any compactly supported $g\in H^{\tilde r}_q$
$$
\|g\|_{C^1} \le \tilde C  \|g\|_{H^{\tilde r}_q} \, .
$$
Since $r>0$, using mollification, we can approach $\mathbf{1}_{[-2,2]} \psi$ by $C^1$ functions
 $\psi_\epsilon$ with
$$
\|\psi_\epsilon\|_{C^1} \le \tilde C  \| \psi_\epsilon\|_{H^{\tilde r}_q} 
\le  \tilde C_0\frac{\|\psi\|_{H^r_q}}{\epsilon^{2}}\, ,
\quad 
\|  \psi-\psi_\epsilon\|_{L^q[-2,2]}
\le \tilde C_1 \epsilon^{r} \|\mathbf{1}_{[-2,2]} \psi\|_{H^r_q}\, , 
\, \forall \epsilon >0 \,  .
$$
Note that $(\mathbf{1}_{[c_2,c_1]}\psi_\epsilon)/\rho_{t}\in BV$,
with BV norm bounded by $C_0\|\psi_\epsilon\|_{C^1[c_2, c_1]}$, because
$\mathbf{1}_{[c_2,c_1]}/\rho_{t}\in BV$. (To check this, use that\footnote{See e.g. \cite[Theorem 2c)]{Yo}, or, in the MT case \cite{Og}.} $\inf_{[c_2, c_1]} \rho_{t}>0$  and that $\rho_{t}$
is the sum of a  $C^1$ function together with
finitely many square roots spikes and square roots, by \eqref{Ruellerho},
and consider separately each maximal interval bounded by postcritical points.)

Since it is easy to find $C_0< \infty$ and $\kappa <1$ (independent of $\varphi$, $\psi$) such  that
$$
\int_{\real\setminus I_t}| (\varphi \circ f_{t}^j)\,  \psi |\, \D m
\le C_0 \kappa^j \|\varphi\|_{L^\infty} \|\psi \|_{L^q}\, ,
\, \, \forall j\ge 1\, ,
$$
and since we can write $\int^{a_t}_{c_1} (\varphi \circ f_{t}^j)\,  \psi \,\D m=\int_{ -a_t}^{c_2} (\varphi \circ f_{t}^{j-1})\,  (\psi\circ f^{-1})
|(f^{-1})' |\,\D m$,
and 
\begin{align*}
\int_{ -a_t}^{c_2} (\varphi \circ f_{t}^j)\,  \psi \,\D m
&= \int_{-a_t}^{c_2} \varphi (f_{t}^{j-[j/2]}(x))
\frac{\psi(f^{-[j/2]}(x)}{(f^{[j/2]})'(f^{-[j/2]}(x))}\,  \D x\\
&\qquad+\sum_{\ell=[j/2]}^{j-1}\int_{c_2}^{c_3} \varphi(f^\ell(x)) \frac{\psi(f^{\ell-j}(x))}{|(f^{j-\ell})'(f^{\ell-j}(x))|} \D x\, ,
\end{align*}
($f^{-k}$ above denotes
$(f^k|_{\cap_{j=1}^k f^{-j}[-a_t,c_2]})^{-1}$),
which gives the limiting contribution $\int \varphi(x) \, \D\mu_{t} \cdot \int_{I_t\setminus[c_2,c_1]} \psi \, \D x$, the lemma follows from three facts.
First, 
$$
\int_{[c_{2},c_{1}]} (\varphi \circ f_{t}^j) \psi_\epsilon \, \D x=
\int_{[c_{2},c_{1}]} (\varphi \circ f_{t}^j) \frac{\psi_\epsilon}{\rho_{t}} \, \D \mu_{t}
\, .
$$
Second, there exist $\theta <1$, $C_1 <\infty$ (independent of
$\varphi$, $\psi_\epsilon$, see \cite[Theorem~1.1]{KeNo}, by  the
principle of uniform boundedness,
$C_1$ does not depend on $\psi_\epsilon$) such that
$$
|\int_{c_2}^{c_1} (\varphi \circ f_{t}^j) \frac{\psi_\epsilon }{\rho_{t}}\, \D \mu_{t}-
\int \varphi \, \D \mu_{t} \int_{c_2}^{c_1} \psi_\epsilon \, \D m
|\le C_1  \|\varphi\|_{L^1} \biggl \|\frac{\mathbf{1}_{[c_2,c_1]}\psi_\epsilon}{\rho_{t}}\biggr \|_{BV} \theta^j\, , \,\,
\forall j \ge 1 \, .
$$
Third, 
$$
\max \bigl \{ |\int \varphi \, \D \mu_{t} \int_{I_t} (\psi_\epsilon-\psi) \, \D m|,
|\int_{I_t} (\varphi \circ f_{t}^j) (\psi -\psi_\epsilon )\, \D m|
\bigr \}\le
\sup|\varphi| \| \psi-\psi_\epsilon \|_{L^q[-2,2]} \, .
$$
To conclude, for each $j$  choose $\epsilon=\theta^{j/(r+2)}$, so that
$
\frac{\theta^j}{\epsilon^2} =
\epsilon^r = \theta^{jr/(r+2)}=: \kappa^j
$.
\end{proof}

We can now provide the proof of  
Proposition~\ref{CC2}:

\begin{proof}[Proof of Proposition~\ref{CC2}]
Setting $R_{t}(y)=\int_{-10}^y \rho_{t}(x)\, \D x$, it is easy
to see (using e.g \eqref{Ruellerho}, or, in the TSR case, \cite{BS2})
that $\mathbf{1}_{[-1,1]} \cdot  M^\eta_x R_{t}(x)$ belongs to $L^q$ for any
$\eta \in (0,1)$ and any $1\le q<2$. So $\sup_{\eta \in (0,1)}
\int_{-1}^1 |M^\eta_x R_{t}(x)| \, \D x <\infty$. In particular,
$M^\eta_x (\rho_{t})(x)$ is well defined in the sense of distributions
of order one\footnote{This was already established in Lemma~\ref{identity1}.} uniformly in $\eta \in (0,1)$.

Next, since $f_{t+\tau}(x)=f_{t}(x)+\tau$ for the
quadratic family, we find, recalling \eqref{extendd},
\begin{align*}
(\LL_{t+\tau} \rho_{t})(x)
&=(\LL_{t} \rho_{t})(x-\tau)=\rho_{t}(x-\tau)  \, , \forall x 
\, ,
\, \forall |\tau |<\epsilon_0\, , \\
(\LL_{t+\tau} \rho_{t})(x)&=(\LL_{t} \rho_{t})(x\mp \epsilon_0)=\rho_{t}(x\mp \epsilon_0)\, , \forall x \, ,
\mbox{ if } \pm \tau > \epsilon_0\, .
\end{align*}
This implies that for any $\eta \in (0,1)$ and any $x$, we have
\begin{align*}
\Gamma(1-\eta) \bigl( M^\eta _t&(\LL_s\rho_{t}(x))|_{s=t}+M^\eta_x \rho_{t}(x)\bigr) =\\
&= \frac{\eta}{2}\int_{|\tau|>\epsilon_0} 
\frac{\rho_{t}(x-\sgn(\tau)\epsilon_0)-\rho_{t}(x+\tau) }{|\tau|^{1+\eta}} \sgn(\tau)\,  \D \tau \\
&
=\frac{1}{2}\frac{\rho_{t}(x-\epsilon_0)
-\rho_{t}(x+\epsilon_0)}{\epsilon_0^{\eta}} 
- \frac{\eta}{2}\int_{|\tau|>\epsilon_0} 
\frac{\rho_{t}(x+\tau) }{|\tau|^{1+\eta}} \sgn(\tau)\,  \D \tau\, .
\end{align*}
The above defines a function $g_\eta\in H^r_q$ for some $r>0$ and $q>1$
for any $\eta >0$, uniformly in $\eta>\epsilon_1$,
for any fixed $\epsilon_1>0$, and such that \eqref{Prop2.5} holds. Clearly $\int_{\real} g_\eta(x) \D x=0$.
Since $M^\eta_x \rho_{t}(x)$ is  a  distribution of order one, 
$M^\eta _s(\LL_s\rho_{t}(x))|_{s=t}$
(which is compactly supported) is also a distribution of order one.

We next establish the relation between the frozen and the response susceptibility functions: On the one hand, recalling \eqref{is M} and using \eqref{Prop2.5}, we  have
\begin{align*}
\Psi^\mathrm{fr}_\phi(\eta,z)
&= \sum_{k=0}^\infty  z^k \int_{I_{t}}  
 \phi ( f_{t}^k (x) ) \biggl (\frac{g_{\eta}(x)}{\Gamma(1-\eta)}
 -M^\eta_x \rho_{t}(x)
\biggr ) \,
\D x\, .
\end{align*} 
Lemma~\ref{A3} holds for the term involving $g_\eta$.  On the other hand,  Lemma~\ref{identity1} gives
\begin{equation*}
\Psi^{\mathrm{rsp}}_\phi(\eta,z)
=  -\sum_{k=0}^\infty  z^k \int_{I_t}  \phi(f^k_t(x)) 
M^{\eta}_x    \rho_{t}  (x)
\, \D x \, .
\end{equation*}
The last claim of Proposition~\ref{CC2} follows from  Lemma~\ref{limitok}.
\end{proof}

\subsection{Theorem~\ref{WTF} on  
$\Psi^\mathrm{fr}_\phi(1/2,z)$ at  MT parameters}\label{state}
For a CE parameter  $t$, recall $\lambda_c >1$ from \eqref{00},   
 $\UU_{ 1/2}(z)$, $\UU_{ 1/2}^+(z)$   from  \eqref{defUU}, \eqref{defUU+},
and $\Sigma_\phi(z)$, $\Sigma^\HH_\phi(z)$, and
$\Sigma^{\tilde \psi}_{t}(z)$
from \eqref{defSig}, \eqref{defSigH},
and \eqref{defSigH2}. Generalising \eqref{sigmak}, we put, $s_{k,1}=s_{k,1,t}=s_{k,t}$ and
\begin{equation}\label{sigmakell}
s_{k,\ell}:=s_{k,\ell,t}=\sgn( Df^k_t(c_\ell)) 
\, , \quad k \ge 1 \, , \, \ell \ge 1 \, .
\end{equation}
The following elementary lemma is proved at the end of \S\ref{WTFWTF} 
(see \cite[Remark 1.2]{BMS} for the case of piecewise
expanding maps):

\begin{lemma}\label{prelim}
Let $t \in\MT$ with $f_t^P(c_L)=c_L$. 
Then $\UU_{ 1/2,t}(z)$
is rational, with poles at the $P$th roots of $\sgn(Df^P(c_L))|Df^P(c_L)|^{-1/2}$,
and $\UU_{ 1/2,t}(1)=u_t\JJ_{1/2}(t)$, and
$\UU^+_{ 1/2,t}(z)$ is rational, with poles at the $P$th roots of $|Df^P(c_L)|^{-1/2}$, and $\UU^+_{ 1/2,t}(1)=u_t\JJ_{1/2}^+(t)$.

For $\phi \in C^1$, the function  $\Sigma_{\phi,t}(z)$ is
  rational,  with possible simple poles at the $P$th roots
of unity, while  $\Sigma^\HH_{\phi,t}(z)$ is
  rational,  with possible simple poles at the $P$th roots
of $\sgn ( Df^P(c_{L}))$. 
For  any $r>0$, $q>1$ and any  sequence
$\tilde \psi(\ell)\in H^r_q[-2,2]$ such that $\tilde \psi(\ell)=\tilde \psi(\ell+p)$
for $\ell \ge L$, the function 
$z \mapsto  \Sigma_t^{\tilde \psi}(z)\in H^r_q[-2,2]$ is
  rational,  with possible simple poles at the $P$th roots
of $\sgn ( Df^P(c_{L}))$.

Set
$
\PP_t(z)=(z-1) \cdot  \Sigma_{\phi,t}(z)$ and $\PP_t^+(z)=(z-1)\cdot 
\Sigma^\HH_{\phi,t}(z)$.
Then we have
$\PP_t(1)=   \frac{ 1 }{P}\sum_{\ell=L}^{L+P-1}  \phi(c_\ell)$.

If\footnote{If  $\sgn(Df^P(c_L))=-1$, then, clearly,  $\PP^+_t(1)=\PP_t^{\tilde \psi}(1)=0$.}  $\sgn(Df^P(c_L))=+1$, then we have that $\PP^+_t(1)=  
 \frac{1 }{P}\cdot  \sum_{\ell=L}^{L+P-1}   s_\ell\,  \HH(\mathbf{1}_{I_t}\phi)(c_{\ell})$, and,
setting $\PP_t^{\tilde \psi}(z)=(z-1)\cdot 
\Sigma^{\tilde \psi}_{t}(z)$,  that
$\PP_t^{\tilde \psi}(1)=\frac{1}{P}\sum_{\ell=L}^{L+P-1}   s_\ell \tilde \psi_\ell$.
\end{lemma}

Our
main theorem is  proved in \S\ref{WTFWTF} (it is reminiscent\footnote{With respect to \cite{BMS} the term $\WW_{\phi, 1/2}(z)$ and the presence of
the Hilbert transform  are new.} of \cite{BS0, BMS, Ru}):

\begin{customthm}{C}[Frozen susceptibility function $t\in \MT$]\label{WTF}
Let  $t \in \MT$ be  mixing with $f_t^P(c_L)=c_L$.
There exist $\kappa<1$ and a  sequence
$\tilde \psi_t(\ell)\in H^r_q[-2,2]$ with $\tilde \psi_t(\ell)=\tilde \psi(\ell+p)$
for $\ell \ge L$,  and $\int_{I_t} \tilde \psi_\ell\,  \D m=0$ for all $\ell$,
 such that the following holds:
For any compactly supported $\phi$ in $C^1$,
\begin{align*}
 \Psi^\mathrm{fr}_{\phi,t}(1/2, z)&= 
\UU_{ 1/2,t}(z)\Sigma_{\phi,t}(z)
+ \WW_{\phi, 1/2,t}(z)+
\VV_{\phi, 1/2,t}(z) \, ,
\end{align*}
where  $\VV_{\phi, 1/2,t}(z)$ is holomorphic in the annulus
$\{\lambda_c^{-1/2}<|z|<\kappa^{-1}\}$,
while, 
\begin{equation*}
  \WW_{\phi, 1/2,t}(z)= 
\UU^{+}_{1/2,t}(z)
\Sigma^\HH_{\phi,t}(z)
+    \sum_{\ell=0}^\infty \int
(\phi \circ f^\ell_t) \cdot   \Sigma^{\tilde \psi_t}_{t} (z) \, \D m 
 \, .
\end{equation*}
If $\sgn ( Df^P(c_L))=-1$, then $\Psi^\mathrm{fr}_\phi(1/2, z)$
has a simple pole at  $z=1$, with residue 
\begin{equation}\label{lastcorclaim2}
  u_t \cdot \frac {\JJ_{1/2}(t)} P  \sum_{k=L}^{L+P-1}  \phi(c_k)\, .
\end{equation}
If $\sgn ( Df^P(c_{L}))=1$, then
there\footnote{
The notation $\int \phi \,  \tilde\psi^* \, \D m$  represents
the action of $\tilde\psi^* \in (L^\infty[-2,2])^*$ on $\phi\in L^\infty[-2,2]$.
The formula defining $\tilde \psi^*_t$ is given in \eqref{defphitilde0}--\eqref{defphitilde}, it does not depend on $\phi$.}  exists $\tilde \psi^*_t\in (L^\infty[-2,2])^*$ with $\int_{I_t} 
\psi_t^* \, \D m=0$, such that
the residue of the simple pole at
$z=1$ of $\Psi^\mathrm{fr}_\phi(1/2, z)/u_t$ 
is equal to
\begin{align}\label{valW}\frac{\JJ_{1/2}(t)}{P}\cdot \sum_{k=L}^{L+P-1}  \phi(c_k)
+
\frac{\JJ^+_{1/2}(t)} P \cdot  \biggl (
\sum_{\ell=L}^{L+P-1} s_\ell \cdot \HH (\mathbf{1}_{I_t}\phi)(c_{\ell}) 
+  \int
\phi \cdot  \tilde \psi^*_t  \, \D m \biggr )\, .
\end{align}
\end{customthm}

\smallskip

The vanishing of \eqref{valW}  is a codimension-one condition on $\phi$.
If $\JJ_{1,2}(t)\ne 0$ the vanishing of \eqref{lastcorclaim2} is a codimension-one condition on $\phi$. Thus, in view of Lemma~\ref{prelim},
 Theorem~\ref{WTF}  establishes the analogue of  Conjecture~\ref{laconj}[iii] and Conjecture~\ref{laconj+} 
 for the frozen susceptibility function at MT parameters.

\begin{remark}The proof of Theorem~\ref{WTF}  shows that the statements also
hold for $\Psi^\mathrm{rsp}_\phi(1/2, z)$, up to replacing the function
 $\VV_{\phi, 1/2}(z)$ by $\VV_{\phi, 1/2}(z)-\VV_{\phi, 1/2}^{\mathrm{rsp}}(z)$
(using Proposition~\ref{CC2}).
\end{remark}

Besides Ruelle's expansion \eqref{Ruellerho}, the proof of Theorem~\ref{WTF} in \S\ref{WTFWTF}  will be based on Proposition~\ref{CC2}, Lemmas~\ref{AbelMarchaud}--\ref{AbelMarchaud3} and Lemma~\ref{A3} above, and   Lemma~\ref{A2} below.

\begin{lemma}
[Action of the transfer operator on poles] \label{A2}
For $\ell\ge 2$ with $c_{\ell-1}=c_{\ell-1,t}\ne 0$, set
\begin{equation*}
\tilde \chi_{\ell}(x)=\tilde \chi_{\ell,t}(x)=
\frac{s_{1,\ell-1} \cdot \mathbf{1}_{x\ge c_1}} {x-c_{\ell}}
+\frac{\mathbf{1}_{x<c_1}}{c_{\ell-1} \sqrt{c_1-x} } 
+\frac {\mathbf{1}_{x<c_1}} {c_{\ell-1}} \frac{\int_{c_{\ell}}^x \frac{1}{2\sqrt{c_1-u}}\D u}
{x-c_{\ell}}\, .
\end{equation*}
 Then, for any $k\ge 1$ such that $c_{k}\ne 0$, we have, setting $\chi_k=\chi_{k,t}=(x-c_{k,t})^{-1}$,
\begin{equation*}
 \LL_{t} \chi_k=
  s_{1,k} \, \chi_{k+1}
+\tilde \chi_{k+1} 
 \, .
\end{equation*} 
If $t \in \MT$,  there exist $r>0$ and $q>1$
such that $\tilde \chi_{\ell} \in H^r_q[-2,2]$ 
 for each $\ell\ge 2$.
\end{lemma}

\begin{proof} [Proof of Lemma~\ref{A2}]
For any $x< t=c_1$
and $k\ge 1$, using $c_k^2=c_1-c_{k+1}$ twice in the second equality of the
third line, we find, inspired by the beginning of the proof of \cite[Theorem 2]{Tsu2},
\begin{align}
\nonumber \LL_t \chi_k (x)&=
\frac{1}{2\sqrt{c_1-x}}\biggl(\frac 1 {\sqrt{c_1-x}-c_k}+
\frac 1 {-\sqrt{c_1-x}-c_k}\biggr )\\
\nonumber & =\frac{1}{c_k \sqrt{c_1-x} }\, 
\frac{- c_k^2}{(c_k^2 -c_1+x)}\\
\nonumber &= \frac{ \sqrt{c_1-x}}{c_k  }
\frac{- c_k^2}{(c_1-x)(c_k^2 -c_1+x)}=
-\frac{ \sqrt{c_1-x}}{c_k  }\, \frac{c_1-c_{k+1}}{(c_1-x)(x-c_{k+1})}
\\
\nonumber&=-\frac{ \sqrt{c_1-x}}{c_k  }\biggl(
\frac 1 {x-c_{k+1}}- \frac 1 {x-c_1}
\biggr)=-\frac{ \sqrt{c_1-x}}{c_k  } \, (\chi_{k+1}(x)-\chi_1(x))\, .
\end{align}
Now, $-\sqrt{c_1-c_{k+1}}= c_k$ if $c_k<0$ 
that is, $s_{1,k}=1$,
while  $-\sqrt{c_1-c_{k+1}}= -c_k$ if $c_k>0$  
that is, $s_{1,k}=-1$. Thus,
using a Taylor series at $c_{k+1}$, we find 
$$
-\sqrt{c_1-x}=s_{1,k}\cdot c_k +\int_{c_{k+1}}^x \frac{1}{2\sqrt{c_1-u}}\D u\, , \,\,\,\forall x<c_1\, .
$$
Therefore
\begin{equation*}
\LL_t \chi_k (x)=s_{1,k}\cdot \mathbf{1}_{x<c_1} \cdot \chi_{k+1}(x)
+\frac{\mathbf{1}_{x<c_1}}{c_k \sqrt{c_1-x} } 
+\frac {\mathbf{1}_{x<c_1}} {c_k} \frac{\int_{c_{k+1}}^x \frac{1}{2\sqrt{c_1-u}}\D u}
{x-c_{k+1}} \, .
\end{equation*}
In other words, setting
\begin{equation*}
\tilde \chi_{k+1}(x)=
\frac{s_{1,k} \cdot \mathbf{1}_{x\ge c_1}} {x-c_{k+1}}
+\frac{\mathbf{1}_{x<c_1}}{c_k \sqrt{c_1-x} } 
+\frac {\mathbf{1}_{x<c_1}} {c_k} \frac{\int_{c_{k+1}}^x \frac{1}{2\sqrt{c_1-u}}\D u}
{x-c_{k+1}} \, ,
\end{equation*}
we have proved
\begin{equation*}
\LL_t \chi_k (x)=s_{1,k} \cdot \chi_{k+1}(x)
+\tilde \chi_{k+1}(x) \, .
\end{equation*}
It is easy to find $r>0$ and $q>1$
such that all $\tilde \chi_{\ell}  \in H^r_q[-2,2]$ 
 if $t \in \MT$.
\end{proof}

\begin{remark}\label{notC}
We introduce notation useful for the proof
of Theorem~\ref{WTF}.
Let $\YY_t$ be the $L+P-1$-dimensional vector space generated
by the functions 
\begin{equation}
\label{defchik}
\chi_k=\chi_{k,t}=(x-c_{k,t})^{-1}\,  , \,\, k = 1,\ldots L+P-1\, .
\end{equation}
We write $\chi(\vec Y)=\sum_{k=1}^{L+P-1} Y_k \cdot \chi_k$
for $\vec Y=(Y_k)\in \complex^{L+P-1}$.
Then in view of Lemma~\ref{A2}  it is natural to introduce
the finite $L+P-1\times L+P-1$ matrix $\mathbb{S}=\mathbb{S}_t$
acting on $\YY_t$, 
with coefficients 
\begin{equation*}
\mathbb{S}_{k, j}=s_{  1,j} \delta_{k, j+1}
+s_{ 1,L+P-1} \delta_{k,L}\delta_{j,L+P-1}\, .
\end{equation*}
The eigenvalue zero of  $\mathbb{S}_t$ has algebraic multiplicity
$L-1$ but geometric multiplicity equal to one. Since $\sgn ( Df^P_t (c_{L}))=\prod_{k=L}^{L+P-1} s_{1,k}=
s_{P,L}=\sgn ( Df^P_t (c_{L}))$,
the nonzero eigenvalues of $\mathbb{S}_t$ consist
in the $P$th roots of $\sgn ( Df^P_t (c_{L}))$,  they are simple. 
\end{remark}

\subsection{Proof of the  main result (Theorem~\ref{WTF})}
\label{WTFWTF}

\begin{proof}[Proof of Theorem~\ref{WTF}]
We already observed that
$x\mapsto M^{1/2}_s (\LL_s \rho_{t}(x))|_{s=t}$  is supported in $I_{t,\epsilon}\subset I_t$  (recall Footnote \ref{footd}),
while the support of $\mu_t$ is contained in $I_t$.
Thus, for each compactly supported
$C^1$ function $\tilde \phi$
  such that $\tilde \phi(x)=\int \phi \, \D \mu_t=:\phi_*$
for all $x$ in $I_t$, and each sequence of   $C^1$
functions $\upsilon_k$
with 
$\upsilon_k(x)\equiv 1$ if $|x|\le k$
and  $\upsilon_k(x)\equiv 0$ if $|x|\ge 2 k$,
Proposition~\ref{CC2} gives
\begin{align*}
\int  \tilde \phi (x) &M^{1/2}_t (\LL_t \rho_{t}(x)) \D x
= \phi_*
\cdot 
\int_{I_t}
 M^{1/2}_t (\LL_t \rho_{t}(x)) \D x\\
&=
 \phi_*
\cdot \lim_{k \to \infty}
\int_\real \upsilon_k(x)
 M^{1/2}_t (\LL_t \rho_{t}(x)) \D x\\
&=\phi_*
\cdot  \lim_{k \to \infty}\bigl (
\int_\real \upsilon_k(x) M^{1/2}_x \rho_{t}(x) \D x +
\int_\real \upsilon_k(x) \frac{g_{1/2}(x)}{\Gamma(1/2)} \D x  \bigr )
=0\, .
\end{align*} 
(To show $\lim_{k \to \infty}
\int_\real \upsilon_k(x) M^{1/2}_x \rho_{t}(x) \D x=0$, recall Definition~\ref{ddistr}, note that $\upsilon'_k(x)=0$ if $|x|>2k$,
and
$G(y):=\int_{-\infty}^y \rho_{t}(x)  \, \D x=0$  
if $y<c_{2}$ while $G(y)=1$ if $y>c_1$, and use that the Marchaud derivative
of any constant function vanishes.)
Therefore, 
$$
\int  \phi(f_t^j (x)) M^{1/2}_t (\LL_t \rho_{t}(x)) \D x=
\int_{I_t}  (\phi -\tilde \phi) (f_t^j (x)) M^{1/2}_t (\LL_t \rho_{t}(x)) \D x
\, ,\,\, \forall j \ge 0 \, .
$$

From now on, replacing $\phi$ by $\phi -\tilde \phi$
if necessary, we may thus assume that $\phi$ is compactly
supported, $C^1$ and has zero average
with respect to $\D \mu_t$. (This will allow us to 
  exploit exponential decay 
of correlations from Lemma~\ref{A3}.)

Our starting point is then that $\Psi^{\mathrm{fr}}_\phi(1/2,z)=\Psi^{\mathrm{rsp}}_\phi(1/2,z)+\VV^{\mathrm{rsp}}_{\phi,1/2}(z)$, with $\VV^{\mathrm{rsp}}_{\phi,1/2}(z)$, holomorphic in the disc of radius $\kappa^{-1}>1$,
from Proposition~\ref{CC2}.
By  Lemma~\ref{identity1},
\begin{equation*}
\Psi^{\mathrm{rsp}}_\phi(1/2,z)
=  -\sum_{j=0}^\infty  z^j \int_{I_t}  \phi(f^j_t(x)) 
M^{1/2}_x   ( \rho_{t})  (x)
\, \D x \, .
\end{equation*}
Therefore, using the expansion  \eqref{Ruellerho} for  $\rho_{t}(x)$, and recalling $u_t=-\frac{\sqrt \pi}{2}  \rho_t(0)$, 
Lemmas~\ref{AbelMarchaud}--\ref{AbelMarchaud3} 
imply that
there exist $r>0$, $q>1$, and a  function $\tilde g\in H^r_q[-2,2]$ 
such that $	\Psi^\mathrm{rsp}_\phi(1/2, z)$ can be written as
(using the\footnote{The  improper integral is well-defined and finite, since $\phi$ is $C^1$ and 
$[-a_t, c_1]$ contains a neighbourhood of each $c_{\ell}$.}
Hilbert transform \eqref{Hilb})
\begin{align*}
\sum_{j=0}^\infty z^j    \bigl [\int \phi(f^j_t(x))  \, \tilde g(x) \D x  +\sum_{k\ge 1}
 \frac{ u_t \cdot s_{k-1}}{\sqrt{|Df^{k-1}(c_1)|}} \bigl (\phi(c_{k+j}) +\HH(\mathbf{1}_{I_t}\cdot ( \phi\circ f^j_t) )(c_k )\bigr ) \bigr ]\, .
\end{align*}
(Indeed,  the Heaviside function and the logarithm from
Lemma~\ref{AbelMarchaud2}, the $1/2$-H\"older contribution from Lemma~\ref{AbelMarchaud3}, and --- using the MT assumption --- the functions $\mathbf{1}_{\supp(\phi\circ f^j_t) \setminus I_t}(x-c_k)^{-1}$
have uniformly bounded $H^r_q[-2,2]$ norms, for  $r>0$, $q>1$.) 

Next, recalling Proposition~\ref{CC2}, set
$$
\VV^{reg}_{\phi,1/2}(z)=\VV^\mathrm{rsp}_{\phi,1/2}(z)
+
\sum_{j=0}^\infty z^j    \int  \phi(f^j_t(x))\,    \tilde g(x) \D x  \, .
$$
Since  $\int \phi \, \D \mu_{t}=0$, Lemma~\ref{A3} gives $\kappa<1$, independent of $\phi$, such that the function $\VV^{reg}_{\phi,1/2}(z)$
is holomorphic in the disc of radius $\kappa^{-1}>1$.

\smallskip

The rest of the proof is devoted  to the study
of the singular term of the susceptibility function, that is the formal power series
  \begin{align}
\nonumber \Psi^{sing}_\phi(1/2, z)&:= 
\sum_{j=0}^\infty z^j  \sum_{k\ge 1}
 \frac{ u_t \cdot s_{k-1}}{\sqrt{|Df^{k-1}(c_1)|}}\bigl  (\phi(c_{k+j}) +
\HH(\mathbf{1}_{I_t}\cdot(\phi\circ f^j_t))(c_k)\bigr )\, .
\end{align}

We first concentrate on the contribution of $\phi(c_{k+j})$, which can be rewritten as
\begin{align}
\label{rem} 
\Psi^{sing,0}_\phi(1/2, z):=u_t   \sum_{\ell=1}^\infty \phi(c_{\ell}) 
 \sum_{j=0}^{\ell-1} z^j \frac{ s_{\ell-j-1} }{\sqrt{|Df^{\ell-j-1}(c_1)|}} 
 \, .
 \end{align}
 Following the arguments of \cite[App. B, Remark 1.2]{BMS} (see also \cite[\S5]{Ba} and the proof of \cite[Prop 4.6]{BS0}),
 and recalling that our choices imply $X(c_\ell)\equiv 1\equiv v(c_\ell)$ for all $\ell$,  we introduce for every $\ell \ge 1$ the formal Laurent series (recalling \eqref{sigmakell})
\begin{equation*}
 \alpha_{1/2}(c_\ell,z)
 =-\sum_{k=1}^\infty z^{-k} \frac{s_{k,\ell} }{ \sqrt{|Df^k(c_\ell)|}}\, .
 \end{equation*} 
Our assumptions imply that $\alpha_{1/2}(c_\ell, \cdot)$ is rational and  that it is holomorphic 
 in $|z|> 1/\sqrt \lambda_c$.
 Recalling the definition \eqref{defUU} of $\UU_{1/2}$,  
the coefficient of $\phi(c_\ell)$  in \eqref{rem} is 
 \begin{align}
\nonumber& u_t z^{\ell-1}  \sum_{j=0}^{\ell-1} z^{-(\ell-1-j)}
  \frac{ s_{\ell-1-j}}{\sqrt{|Df^{\ell-1-j}(c_1)|}}
 \\
 \label{simple}&\qquad = z^{\ell-1} 
 \biggl ( \UU_{ 1/2}(z)
  -u_t \frac{s_{\ell-1}}{z^{\ell-1} \sqrt{Df^{\ell-1} (c_1)}} \sum_{k=1}^\infty \frac{s_{k,\ell} }{z^k \sqrt{|Df^k(c_\ell)|}} \biggr ) \,  .
  \end{align}
Thus,
we find 
$$
 \Psi^{sing,0}_\phi(1/2, z)=
 \UU_{ 1/2}(z)\sum_{\ell=1}^\infty \phi(c_{\ell})z^{\ell-1}
  -  u_t \sum_{\ell=1}^\infty \frac{\phi(c_{\ell})s_{\ell-1}\alpha_{1/2}(c_\ell,z)}{\sqrt{|Df^{\ell-1}(c_1)|}}
  \, . 
$$
Next,  our MT assumption implies that the function
$$
 \VV^{sing,0}_{\phi, 1/2}(z):=- u_t \sum_{\ell=1}^\infty \frac{\phi(c_{\ell})s_{\ell-1}\alpha_{1/2}(c_\ell,z)}{\sqrt{|Df^{\ell-1}(c_1)|}} 
$$
is  rational, and that it is holomorphic 
 in the domain  $\{|z|> 1/\sqrt \lambda_c\}$.

It remains to consider  the contribution of $\HH (\mathbf{1}_{I_t}\cdot(\phi \circ f^j_t))(c_k)$, that is,
\begin{align}
\nonumber
\Psi^{sing,1}_\phi(1/2, z):=-\frac{u_t}{\pi} 
\sum_{j=0}^\infty z^j  \sum_{k\ge 1}
 \frac{ s_{k-1}}{\sqrt{|Df^{k-1}(c_1)|}} \int_{I_t}
(\phi \circ f^j_{t}) \, {\chi_k} \, \D m
 \, ,
\end{align}
with $\chi_k$ from \eqref{defchik}.  

Using 
 the notation introduced in Remark~\eqref{notC}, 
and introducing  $\MM_t: \YY_t \to H^r_q$  by
\begin{equation*}
\MM_t(\vec Y)= Y_{L+P-1} \tilde \chi_L + \sum_{k=1}^{L+P-2}  Y_k \tilde \chi_{k+1}\, ,
\end{equation*} 
 Lemma~\ref{A2} allows us to   write, 
setting
$\vec Y_k = (\delta_{j,k})_{j=1, \ldots L+P-1}\in \{0,1\}^{L+P-1}$,
\begin{align}
\nonumber
\sum_{j=0}^\infty z^j  \LL^j_{t} (\chi_k)
&=\sum_{j=0}^\infty z^j  \mathbb S^j_t (\vec Y_k)
+\sum_{\ell =0}^\infty z^\ell \LL_t^\ell
\MM_t \sum_{n=0}^\infty z^n \mathbb S_t^n(\vec Y_k)\\
\label{AB}&=:  \sum_{j=0}^\infty z^j A_j(\vec Y_k) +\sum_{j =0}^\infty z^j
B_j(\vec Y_k)
\, .
\end{align}
Since $\int_{I_t}
(\phi \circ f^j_{t}) \, \chi_k \, \D m=\int_{I_t}
\phi \,  \LL^j_t( \chi_k )\, \D m$,
using $A_j$ and $B_j$ from \eqref{AB},
we write $\Psi^{sing,1}_\phi(1/2,z)$ as
\begin{equation}
\label{decomp0}-\frac {u_t} \pi \sum_{j=0}^\infty z^j \sum_{k\ge 1}
 \frac{ s_{k-1}}{\sqrt{|Df^{k-1}(c_1)|}} 
\int_{I_t} \phi(x)\biggl [ A_j ( \vec Y_k )
+ B_j  (\vec Y_k)
\biggr ] \D m \, .
\end{equation}
We start with the terms for the $A_j$. Applying Lemma~\ref{A2} ($j$ times), we find,
\begin{align*}
 -\frac 1 \pi \int_{I_t} \phi(x) A_j (\vec Y_k)\, 
\D m  &=  -\frac{1}{\pi}
 s_{j,k}
\int_{I_t} \phi(x) \chi_{k+j}(x)\,  \D x  \, .
\end{align*}
Therefore, since $s_{k-1}\cdot s_{j,k}=s_{k+j}$,
proceeding as for \eqref{simple}, but with the signs removed, 
the contribution of the $A_j$ terms in \eqref{decomp0} give 
\begin{align*}
-\frac{u_t}{\pi}\sum_{j=0}^\infty z^j \sum_{k\ge 1}
 \frac{1 }{\sqrt{|Df^{k-1}(c_1)|}} &
\int_{I_t} \phi(x) \cdot s_{k+j} \chi_{k+j}(x)\D x 
\\\nonumber &
=\UU^+_{ 1/2}(z) \Sigma^\HH_\phi(z)+\VV^{sing,1}_{\phi,1/2}(z)\, ,
\end{align*}
with $\VV^{sing,1}_{\phi,1/2}(z)$ rational and holomorphic outside of the disc of
radius $\sqrt \lambda_c^{-1}$.

Next, we  analyse the contribution of the $B_j$  in \eqref{decomp0}.
Using $s_{k-1}\cdot s_{n,k}=s_{k+n}$, proceeding as for \eqref{simple} with the
signs removed,  using $\int \phi \, \D \mu_t=0$, 
and setting $\vec \Upsilon_t:=-(u_t/\pi)\cdot \sum_{k\ge 1}
 \frac{s_{k-1}}{\sqrt{|Df^{k-1}(c_1)|}} \vec Y_k$, we find\footnote{We identify $\chi_k=\chi(\vec Y_k)$ with $\vec Y_k$
in \eqref{pre2}.}
\begin{align}
-\frac{u_t}{\pi}\nonumber \sum_{j=0}^\infty & \sum_{k= 1}^\infty
 \frac{ s_{k-1}\, z^j}{\sqrt{|Df^{k-1}(c_1)|}} 
 \int_{I_t} \phi \cdot  B_j   (\vec Y_k)
\,\D m  
\\ \nonumber &
 =   
 \sum_{\ell =0}^\infty z^\ell \int_{I_t} \phi  \cdot
\LL_t^\ell
\MM_t \sum_{n=0}^\infty z^n \mathbb S_t^n (\vec \Upsilon_t) \, \D m
\\ \nonumber&
=   \sum_{\ell =0}^\infty z^\ell\int_{I_t} (\phi \circ f^\ell_t)\,  \MM_t \sum_{n=0}^\infty z^n \mathbb S_t^n  (\vec \Upsilon_t) \,\D m\, 
\\
\label{pre2}&
=-\frac{u_t}{\pi}\cdot 
\sum_{\ell=0}^\infty z^\ell \int_{I_t}
(\phi \circ f_t^\ell)  \cdot 
\sum_{n=0}^\infty z^n  \sum_{k=1}^\infty 
\frac{ s_{k-1} s_{n, k}}{\sqrt{|Df^{k-1}(c_1)|}} \MM_t(\chi_{k+n}) \, \D m
\\
\nonumber&=-\frac{1}{\pi}\cdot \sum_{\ell=0}^\infty z^\ell \int_{I_t}
(\phi \circ f_t^\ell)  \, \bigl ( \UU^+_{ 1/2}(z) \Sigma_t^{\tilde\psi_t}(z)+\VV^{sing,2}_{1/2}(z)\bigr ) \, \D m \, ,
\end{align}
with 
\begin{equation}\label{defphitilde0}
\tilde \psi_t(\ell)=\tilde \chi_{\ell+1}-\rho_t \int_{I_t} \tilde \chi_{\ell+1}\D m\, ,
\end{equation}
 and where $z\mapsto \VV^{sing,2}_{1/2}(z)\in H^r_q[-2,2]$ is rational, and it is
holomorphic outside of the disc of
radius $\sqrt \lambda_c^{-1}$. 
 Hence,
using again $\int \phi \D \mu_t=0$ and Lemma~\ref{A3},
$$
\VV_{\phi,1/2}(z):=\VV^{reg}_{\phi,1/2}(z)+\VV^{sing,0}_{\phi,1/2}(z)+\VV^{sing,1}_{\phi,1/2}(z)
-\frac{1}{\pi}\cdot \sum_{\ell=0}^\infty z^\ell \int_{I_t}
(\phi \circ f_t^\ell) \,  \VV^{sing,2}_{1/2}(z)\,\D m
$$
 is holomorphic in the annulus
$\{\lambda_c^{-1/2}<|z|<\kappa^{-1}\}$.

Finally, the formulas \eqref{lastcorclaim2}
and \eqref{valW}  for  the residues  
follow from Lemma~\ref{prelim}. In particular,
 since $\int \phi \D \mu_t=0$, using Lemma~\ref{A3}, we may take 
\begin{equation}
\label{defphitilde}
\int \phi \, \tilde \psi^*_t \, \D m
:= -\frac 1 \pi \sum_{j=0}^\infty z^j \int_{I_t}
(\phi \circ f_t^j)  \sum_{\ell=L}^{L+P-1} s_\ell \, \tilde \psi_t(\ell)  \,   \D m
\, .
\end{equation}
\end{proof}

\begin{proof}[Proof of Lemma~\ref{prelim}]
If $t\in \MT$, then $\UU_{1/2}(z)$  is the rational function
\begin{align*}
&\UU_{1/2}(z)=\frac{u_t }{z^{L-1}}\biggl (\sum_{\ell=0}^{L-1} \frac{s_\ell z^{L-1-\ell}}{ \sqrt{|Df_t^\ell(c_1)|}}
+\sum_{\ell=L}^{L+P-1} \frac{s_\ell z^{L-1-\ell}}{ \sqrt{|Df_t^\ell(c_1)|}}
\sum_{k=0}^\infty 
\frac {s_{kp,\ell}}{z^{kp}\sqrt {|Df_t^{kp}(c_L)| }}  \biggr )\\
&\qquad=\frac{u_t }{z^{L-1}}\biggl (\sum_{\ell=0}^{L-1} \frac{s_\ell z^{L-1-\ell}}{ \sqrt{|Df_t^\ell(c_1)|}}
+\sum_{\ell=L}^{L+P-1} \frac{s_\ell z^{L+P-1-\ell}}{ \sqrt{|Df_t^\ell(c_1)|}}
\frac {1}{z^P-\frac{s_{P,L}}{\sqrt {|Df_t^{P}(c_L)| }}}  \biggr ) \, .
\end{align*}
Similarly, $\UU^+_{1/2}(z)$  is the rational function
\begin{align*}
\UU^+_{1/2}(z)=\frac{u_t}{z^{L-1}}\biggl (\sum_{\ell=0}^{L-1} \frac{z^{L-1-\ell}}{ \sqrt{|Df_t^\ell(c_1)|}}
+\sum_{\ell=L}^{L+P-1} \frac{ z^{L+P-1-\ell}}{ \sqrt{|Df_t^\ell(c_1)|}}
\frac {1}{z^P-\frac{1}{\sqrt {|Df_t^{P}(c_L)| }}}  \biggr ) \, .
\end{align*}

 We show that $\Sigma_\phi(z)$ and $\Sigma^\HH_\phi(z)$ are
 rational, with possible poles
at the $P$th roots of unity for $\Sigma_\phi(z)$, and at  the $P$th roots of $\sgn (Df^P(c_L)$ for $\Sigma^\HH_\phi(z)$. Indeed,
$$
\Sigma_\phi(z)= \sum_{\ell=1}^\infty \phi(c_{\ell})z^{\ell-1}=
\sum_{\ell=1}^{L-1} \phi(c_{\ell})z^{\ell-1}
+ \frac{z^{L-1}}{1-z^P} \sum_{\ell=0}^{P-1} \phi(c_{L+\ell})z^{\ell}\, .
$$  
The  residue of $\Sigma_\phi(z)$ at $1$  is thus
$\frac{1}{P}\sum_{\ell=L}^{L+P-1}  \phi(c_\ell)$.
If $\sgn(Df^P(c_L))=+1$ then
$$
\Sigma^\HH_\phi(z)
=\sum_{\ell=1}^{L-1} s_\ell \, \HH(\mathbf{1}_{I_t}\phi)(c_{\ell})z^{\ell-1}
+ \frac{z^{L-1}}{1-z^P} \sum_{\ell=0}^{P-1} s_{L+\ell}\,  \HH(\mathbf{1}_{I_t}\phi)(c_{L+\ell})z^{\ell}\, , 
$$
in which case the residue at $1$ is $\frac{1}{P}\sum_{\ell=L}^{L+P-1}   s_\ell \HH(\mathbf{1}_{I_t}\phi)(c_{\ell})$.
If $\sgn(Df^P(c_L))=-1$ 
$$
\Sigma^\HH_\phi(z)
=\sum_{\ell=1}^{L-1} s_\ell \, \HH(\mathbf{1}_{I_t}\phi)(c_{\ell})z^{\ell-1}
+ \frac{z^{L-1}}{1+z^P} \sum_{\ell=0}^{P-1} s_{L+\ell}\,  \HH(\mathbf{1}_{I_t}\phi)(c_{L+\ell})z^{\ell}\, .
$$
The same argument gives that $\Sigma_t^{\tilde \psi}(z)$ is rational, with possible poles
at the $P$th roots of  of $\sgn (Df^P(c_L))$, and, when $\sgn (Df^P(c_L))=1$,
its residue at $z=1$ is equal to
$\frac{1}{P}\sum_{\ell=L}^{L+P-1}   s_\ell \,  \tilde \psi_\ell$.
\end{proof}

 %%%%%%%%%%%%%%%%%%%%%%%
 
\section{One-half transversality: Numerics and Conjecture~\ref{conj2}}
\label{numerics}
Sums $\JJ_\eta(t)$ of the form \eqref{JMT}  with $\eta=1/2$ 
play an important role in our study of the fractional  response in the quadratic family at a Misiurewicz--Thurston (MT) parameter $t$. In particular the ``one half transversality condition'' condition $\JJ_{1/2}(t)\neq 0$ is essential in  Theorem~\ref{WTF}.

The sums \eqref{JMT} already appeared in the literature:
For $\eta=1$, we recover the Tsujii transversality condition $\JJ_1(t)\neq  0$ (see Tsujii \cite{Tsu}), which is  satisfied for {\it every} MT  parameter in the quadratic family (and in a far larger class of parameters, see Footnote~\ref{foot1}).
Figure~\ref{Fig1a} illustrates the graph of $\JJ_1(t)$
over hundreds of MT parameters. (We explain in 
\S\ref{algo'}, how these
MT parameters were obtained.)

\begin{figure}
	\includegraphics[scale=0.7]{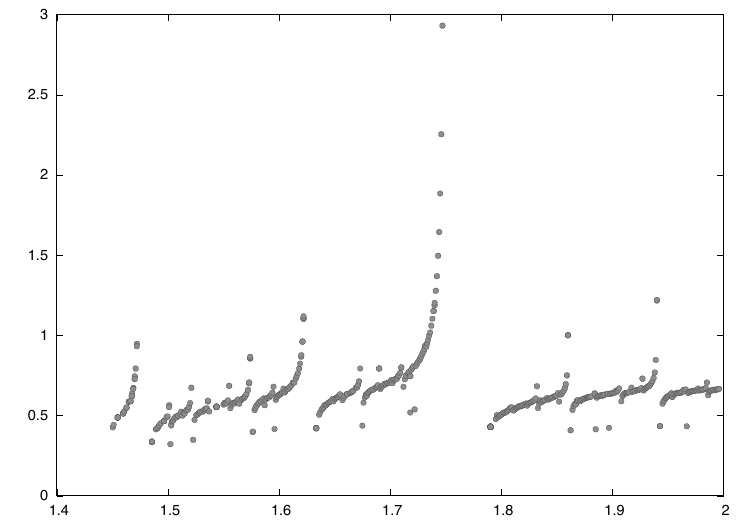}
	\caption{ $\JJ_{1}(t)$ for Misiurewicz--Thurston (MT) parameters $t$.}\label{Fig1a}
\end{figure}

For $\eta=1/2$,  the  set of $1/2$-summable parameters is important in the study of unimodal maps. Nowicki and van Strien \cite{nvs} proved (in particular) that quadratic maps that satisfy the $1/2$-summability condition  have an absolutely continuous invariant probability measure.   It turns out  that,  in the complement of the hyperbolic parameters,  {\it almost every parameter} satisfies the $1/2$-summability condition (see Lyubich \cite{lyub}, and also Martens and Nowicki \cite[\S 4]{mn}). 

However, the condition  $\JJ_{1/2}(t)\ne 0$ does not seem to have appeared in the literature.  The reader may wonder when this condition holds. We do not have a definitive answer for this. As observed after the statement of Theorem~\ref{WTF},
it is easy to see that $\JJ_{1/2}(2)=0$. This  first came as a surprise to us, but
it is in fact natural, as we explain next.

We already noticed that the piecewise expanding and piecewise analytic map $F_t$
conjugated to $f_t$ via the change of variable $\Lambda=\Lambda_t$ given by its invariant densities (see \eqref{magicOg})
$(D_t F_t)^k(\Lambda(c_{1,t})_\pm)=s_k \sqrt{ |(Df_t^k)(c_{1,t})|}$ 
we see that $\JJ_{1/2}(t)\ne 0$ is just the ordinary  transversality assumption \cite{BS0}
of $F_t$ for the vector field $v\equiv 1$ on $[F_t(\Lambda(c_{1,t})), \Lambda(c_{1,t})]$.
Noting that $F_2$ is just the full tent map with slopes $\pm 2$,
the fact that  $\JJ_{1/2}(t)= 0$ for the full quadratic map 
mirrors the fact that the family of tent maps $\tilde F_t$
is tangential\footnote{The topological entropy is  the logarithm
of the slope and thus constant, so there are no bifurcations. It is  illuminating  to construct explicitly the corresponding topological
conjugacy.}
so that $F_2$ is tangential  for the vector field $v\equiv 1$.

Note also that the measure of maximal entropy of $f_2$ coincides with
the absolutely continuous measure. See \cite[\S 8, \S 9]{Mis81} for 
classical necessary conditions
for this property to hold.
More recently, based on \cite[Theorem 2]{Do}, Dobbs and Mihalache observed
\cite[Fact 5.2]{DM} 
that 
the measure of maximal entropy of  an S-unimodal map $f$ with positive entropy is absolutely continuous if and only
if $f$ is\footnote{A unimodal map $f$ is called pre-Chebyshev if  $f$ is
exactly $m$ times renormalisable, for some $m \ge  0$,  each renormalisation
being of period two, and, in addition, if   $J$ is the restrictive interval for the $m$th renormalisation,
$f^{2^m}|_J : J \to J$ is smoothly conjugate on $J$ to $x \mapsto 1 - 2|x|$ on $(-1, 1)$.} pre-Chebyshev.
The (\cite[Proposition 5.1]{DM}  only pre-Chebyshev quadratic map  is
$f_2 : x \to 2-x^2$.

The parameter $t=2$ corresponds to the simplest combinatorics
$0\mapsto c_1 \mapsto c_2 \mapsto c_2$. One can check that $\JJ_{1/2}(t)\ne 0$ holds for the
parameter $t$ corresponding to the next simplest Misiurewicz--Thurston combinatorics (beware that it is not mixing)
$
0\mapsto c_1 \mapsto c_2 \mapsto c_3 \mapsto c_3
$.
%c_1-> c_2->c_3->c_4-> c_4 is mixing
Indeed,  this parameter is $t=1.54368\ldots$, and we have $f_{t}(c_{3})=c_{3}$
with $\lambda_1=f'_{t}(c_1)=-3.0874\ldots$,
$\lambda_2=Df_{t}(c_2)=-Df_{t}(c_3)=1.6786\ldots$,
so that a geometric series gives
$$\JJ^{1/2}(t)=
1-\frac{1}{\sqrt {|\lambda_1|}} -
\frac{1}{\sqrt{ |\lambda_1 \lambda_2|}}
\frac{1}{1+1/\sqrt{\lambda_2}}
=0.182959\ldots
\, .
$$

\subsection{Numerics}\label{algo'}
We have performed numerical experiments to investigate $\JJ_{1/2}(t)$
for hundreds of Misiurewicz--Thurston parameters: We calculate  858  MT parameters $t$, with high accuracy,   and we compute the corresponding sums $\JJ_{1/2}(t)$.

The algorithm consists into finding  approximate values  for Misiurewicz--Thurston parameters  in  the real line,   and then use the Milnor--Thurston transformation  to obtain such parameters  with higher precision.  Indeed, given a real Misiurewicz--Thurston parameter $t=c_1$  such that 
\begin{equation*}
f_{c_1}^{k+j}(0)=f_{c_1}^{k}(0)
\end{equation*}
for some $k\ge 1$ and $j\ge 1$, choose a point $x=(x_1,x_2, \dots, x_{k+j-1})\in \mathbb{R}^{k+j-1}$ such that  $x_i\cdot f^i_{c_1}(0) > 0$ for every $1\le  i< j+k$.   Next, define
$$T(x_1,x_2, \dots, x_{k+j-1})=(y_1,y_2, \dots, y_{k+j-1})$$
where $f_{x_1}(y_i)=x_{i+1}$ for $i <  k+j-2$, while $f_{x_1}(y_{k+j-1})=x_{k}$, and $y_i \cdot  f^i_{c_1}(0) > 0$ for every $0< i< j+k$. Then Milnor and Thurston \cite[Proof of Lemma 13.4]{MT} proved that 
$T^\ell(x)$ converges to $(c_1,f_{c_1}(c_1), \dots,f_{c_1}^{k+j-1}(c_1))$ exponentially fast. 

\begin{figure}
	\includegraphics[scale=0.28]{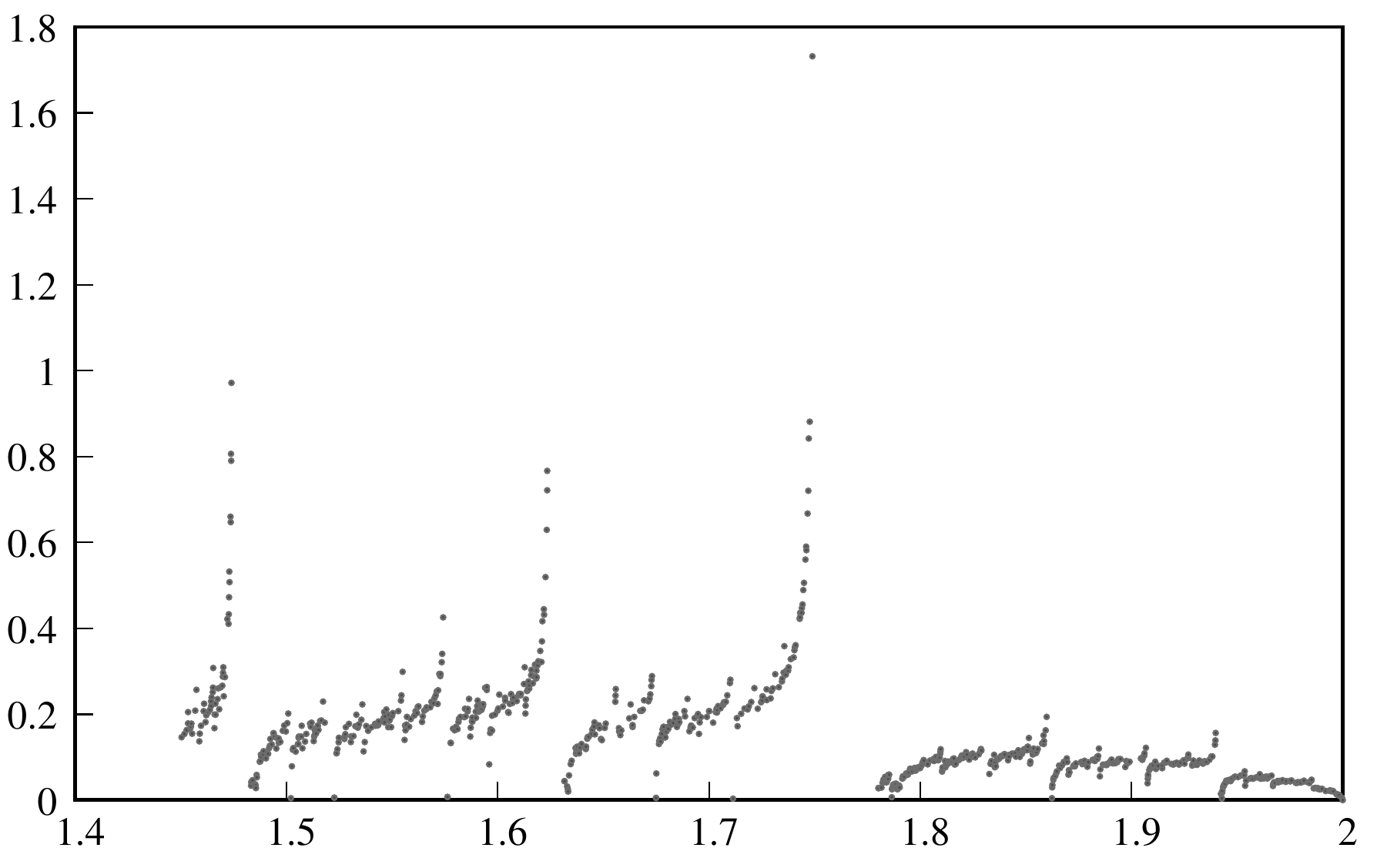}
	\caption{ $\JJ_{1/2}(t)$ for MT parameters $t$.}\label{Fig1}
\end{figure}

\smallskip
We explain next why some of the Misiurewicz--Thurston parameters found by this algorithm are not renormalizable, and hence mixing:
 The critical point of a Misiurewicz--Thurston  map $f$ is not periodic, and  there is $L > 0$ such that $f^L(0)$ is periodic. 
Taking $L$  minimal with this property, let   $P\geq 1$ be such that  $f^{P}(f^L(0))=f^L(0)$.  Suppose that
 $P$ is a prime number, $P\ne 2$, the multiplier $Df^P(f^L(0))$ is positive, and $f$ is renormalizable.
Then    the period of the first (and only) renormalization of $f$  is $P$,
 $$
F=f^P\colon [|f^L(0)|,-|f^L(0)|]\rightarrow [-|f^L(0)|,|f^L(0)|]$$
 is the first (and only) renormalization of  $f$, and, additionally, $F(f^L(0))=f^L(0)$, while $F(0)$ is not a fixed point of $F$,  and $F^2(0)=f^L(0)$. In particular $f^{2P}(f^L(0))=f^L(0)$. This implies $2P\geq  L > P$. Our numerical experiment give Misiurewicz--Thurston maps $f$
for which $P$ is a prime number, $P\ne 2$, the multiplier $Df^P(f^L(0))$ is positive, but  $2P\geq  L > P$ does not hold, so   that $f$ is not renormalizable.  Moreover the numerical experiment  gives $J_{1/2}(f)\neq 0$.

\smallskip

 The resulting graph for $\JJ_{1/2}(t)$ can be seen in Figure~\ref{Fig1}.
 (To be compared with Figure~\ref{Fig1a} 
 for the  graph of the Tsujii transversality condition $\JJ_{1}(t)$.) The value of $\JJ_{1/2}(t)$  seems to be always strictly positive except at $t=2$, where it vanishes. However, $\JJ_{1/2}(t)$  appears to be   close to zero (see Figure~\ref{Fig2} for a close-up) at a few values of $t$. 
The ``almost vanishing of $\JJ_{1/2}(t)$'' phenomenon seems  to occur when the real Misiurewicz--Thurston parameter $t$ is  such that $f_{t}$ is renormalisable with deepest (i.e., last) renormalisation  has\footnote{In particular, this deepest renormalisation is topologically conjugated to $f_2$.} topological entropy $\log 2$ (that is, there is a periodic point $-x_* \in \mathbb{R}$ with period  $n \ge 2$ such that 
the intervals $f_{t}^k[-x_*,x_*]$, for $k=0, \dots, n-1$,  are pairwise disjoint, except possibly at their boundaries, with $f_{t}^n[-x_*,x_*]\subset [-x_*,x_*]$, and 
 the unimodal map $g\colon [-1,1]\rightarrow [-1,1]$ defined 
	by $g(x)=x_*^{-1} f^n_{t}(x_*x)$ satisfies $g(-1)=g(1)=-1$ and $g(0)=x_*$).
Moreover it seems that this deepest  renormalisation is close to a quadratic polynomial on the interval of renormalisation. (This last property  happens when the so called ``complex bounds" are large enough. This  occurs for instance  in the first renormalisation  of parameters very close to $t=2$, see e.g. Douady and Hubbard \cite[Proof of Thm~5]{DH}.)
\begin{figure}
	\includegraphics[scale=0.32]{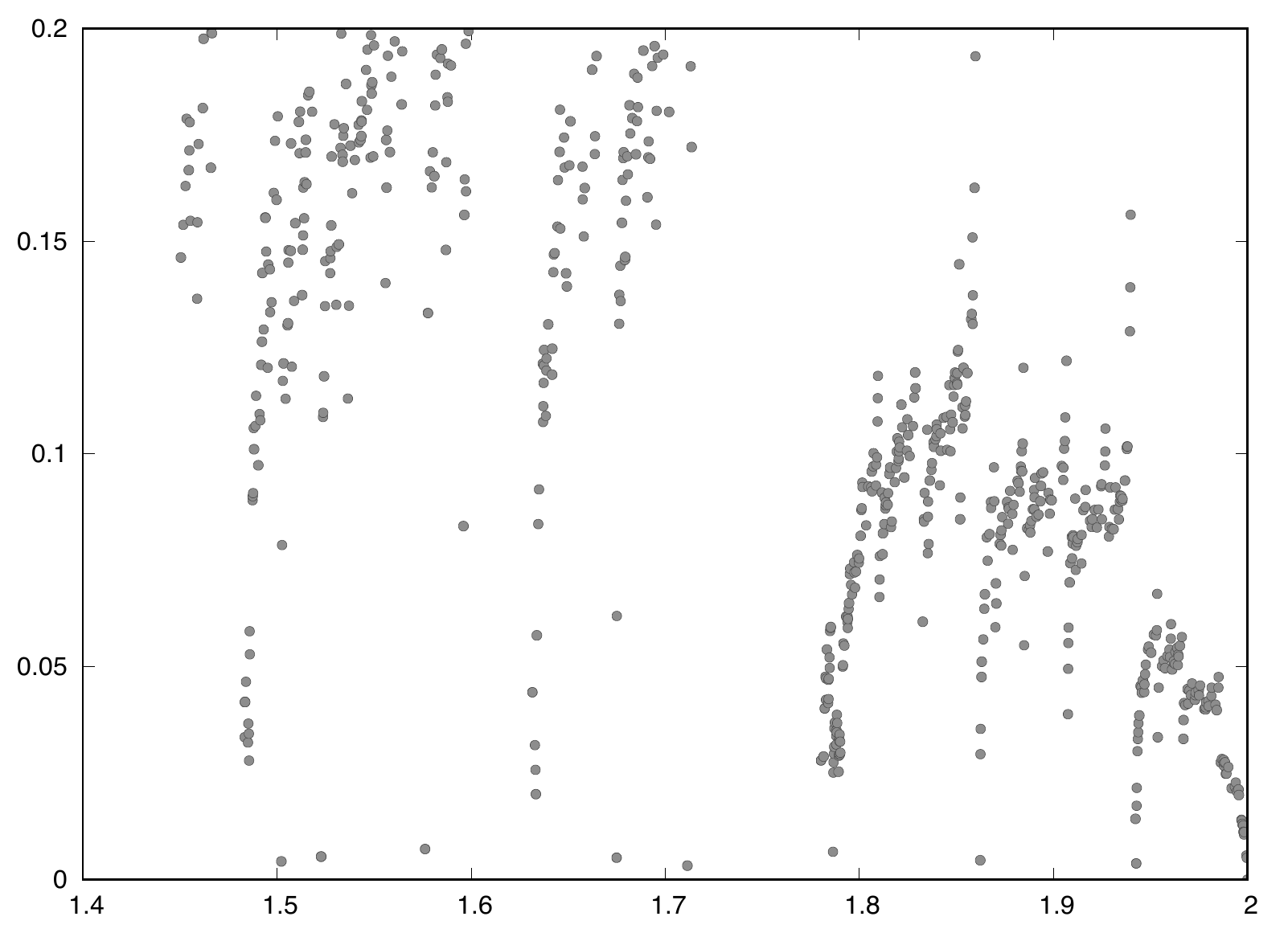}
	\caption{Close-up of $\JJ_{1/2}(t)$ for MT parameters $t$.}\label{Fig2}
\end{figure}

Note that if the deepest renormalisation
is  conjugated to the Ulam--von Neumann map, then \cite{BBS} does not give any
lower bound for the regularity of the SRB measure.

\subsection{Conjecture~\ref{conj2} on one-half-transversality for the quadratic family}
\label{conjB}
Our numerical experiments support and motivate the following conjecture.

 \begin{customconj}{B}\label{conj2}  For the quadratic family $f_t$, we have:
\begin{itemize}
 \item[i.]
For every real MT parameter $t\neq 2$, we have  $\JJ_{1/2}(t)> 0$.

\item[ii.] In fact, $\inf\,  \{\JJ_{1/2}(t) \mid t \text{ a real  MT parameter, } t\ne 2\}=0$.

\item[iii.] 
More generally,  if  $t\neq 2$ satisfies  $1/2$-summability, then  $\JJ_{1/2}(t)> 0$.

\item[iv.]  The parameter $\eta=1/2$ is a critical exponent in the following sense: If $\eta > 1/2$ then   $\JJ_{\eta}(t)> 0$ for {\it every} real MT parameter $t$.  If $\eta \in (0,1/2)$ there are infinitely many real parameters $t$ such that $\JJ_{\eta}(t) <  0$.
\end{itemize}
\end{customconj}

 G. Levin suggested that we perform experiments also for  parameters such that $f^P_{t}(c)=c$ for some
$P\ge 1$. For such $t$,  we set
\begin{equation*}
\JJ^{\mathrm{per}}_\eta(t)=\sum_{k=0}^{P-1}   \frac{\sgn ( (D f^k_{t}(c_1)))}{|D f^k_{t}(c_1)|^\eta}\, .
\end{equation*}
In view of the resulting data, which  is presented in Figure~\ref{Fig4}, we expect
that claim [i]  of Conjecture~\ref{conj2}  
also holds for all real periodic parameters.

\begin{figure}
	\includegraphics[scale=0.6]{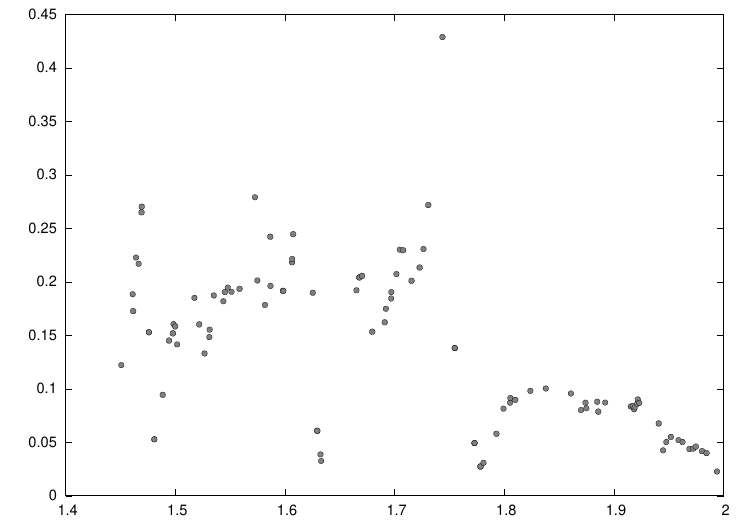}
	\caption{ $\JJ^{\mathrm{per}}_{1/2}(t)$ for periodic parameters $t$.}\label{Fig4}
\end{figure}

\smallskip

Finally, note that if $t>2$ then  $c_1>2$ and,  for all $k\ge 1$,  we have $f^{k+1}_{t}(c_1)>f^k_{t}(c_1)>2$, so that $|D f^{k+1}_{t}(c_1)|>|D f^k_{t}(c_1)|>4$,
while $\sgn( (D f^{k+1}_{t}(c_1))=-\sgn ((D f^k_{t}(c_1)))$. Thus,  the sum \eqref{JMT} converges absolutely for any $\eta>0$ and
$$
 \JJ_\eta(t)>1-\frac{1}{4^\eta}>0 \, , \, \forall t >2 \, , \, \forall \eta >0 \, .
$$
%%%%%%%%%%%%%%%%%%%%%%%%%%%%%%%%%%%%%%%%%%%%%%%%%
	
\section{Whitney	fractional integrals $I^{\eta,\Omega}$ and  derivatives  $M^{\eta,\Omega}$}
\label{Whitneyfrac}

\subsection{Abel's remark for Whitney fractional integrals $I^{1/2,\Omega }$
(Lemma~\ref{halff+omega})}
\label{bonusOmega} For  
$\Omega\subset (1,2)$ satisfying \eqref{tscond0} and  $t\in \Omega$,
it is natural to consider the one-sided {\it $\Omega$-(Whitney--)Riemann--Liouville fractional integrals of $\phi\in L^1$}
on $\Omega$ defined by
\begin{align*}
	(I^{\eta,\Omega}_{+} \phi) (t)
	&=\frac{1}{ \Gamma(\eta)}
	\int_{\Omega \cap (-\infty,t]}
	\frac {\phi(\tau)}{(t-\tau)^{1- \eta}}  \, \D \tau\, , \\
	(I^{\eta,\Omega}_{-} \phi) (t)
	&=\frac{1}{ \Gamma(\eta)}
	\int_{\Omega \cap [t,\infty)}
	\frac {\phi(\tau)}{(\tau-t)^{1- \eta}}  \, \D \tau \,  ,
\end{align*}
and the two-sided corresponding object defined by
\begin{equation*}
I^{\eta,\Omega} \phi(t)
=\frac{1}{2 \Gamma(\eta)\cos (\eta \pi/2)}
\int_{ \Omega}
\frac {\phi(\tau)}{|t-\tau|^{1- \eta}}  \, \D \tau\, .
\end{equation*}

Recalling the spikes $\phi_{c_k,\pm}$ from
\eqref{lrspike}, we give an
 analogue of Lemma~\ref{halff+}.
	
\begin{customlemma}{E}[Abel's remark on $\Omega$]\label{halff+omega}
Let  $\Omega\subset (1,2)$ be a compact positive
Lebesgue measure set satisfying Tsujii's property \eqref{tscond0} for all
$\beta <2$.
For any $k\ge 1$,
the one-sided $\Omega$-Riemann--Liouville half integrals
 of the square root spikes   
satisfy
\begin{align*}
I^{1/2,\Omega}_{-,t} (\phi_{c_k,+}) (x,t)&= A^\Omega_{c_k,+}(x,t) 
+\sqrt \pi \cdot 
\mathbf{1}_{x > c_k+t}(x)\, ,\,\,
\\
I^{1/2,\Omega}_{+,t} (\phi_{c_k,- })(x,t)&= A^\Omega_{c_k,-}(x,t) +
\sqrt \pi \cdot \mathbf{1}_{x < c_k+t}(x)\, ,
\end{align*}
where 
$A^\Omega_{c_k,\pm}(x,t)\le 0$ 
are defined by
$A^\Omega_{c_k,\sigma}(x,t)=0$  if     $\sigma x \le \sigma (c_k+t)$, and
$$
A^\Omega_{c_k,\sigma}(x,t) = -
\frac{1}{\Gamma(1/2)}
\int_0^1 \frac{\mathbf{1}_{\Omega^c}(t+\sigma(x-(c_k+t))u)}{u^{1/2} (1-u)^{1/2}} \, \D u \mbox{  if  }   \sigma x > \sigma (c_k+t)  \, .
$$
In addition,  $x\mapsto A^\Omega_{c_k,\sigma}(x, t)$ is $\eta$ H\"older,
for $\sigma=\pm$ and for all $\eta <1/2$.
\end{customlemma}

We refrain from stating  the $\Omega$ version of the two-sided Lemma~\ref{halff}. (Since \eqref{195.01} cannot be used,
the proof  must be more ``hands on.'' See also Remark~\ref{wildd}.)

\begin{proof}[Proof of Lemma~\ref{halff+omega}]
We handle $\phi_{c_k,+}(x,t)$. The case of $\phi_{c_k,-}(x,t)$  is symmetric.
The Whitney half  integral $I^{1/2,\Omega}_{-,t}$ of the  spike $\phi_{c_k,+}(x,t)$ with respect to $t$ is 
\begin{align*} 
(I^{1/2,\Omega}_{ -,t}\phi_{c_k,+})(x,t)&= 
\frac{1}{\Gamma(1/2)} 
\int_{\tau \in [t,\infty]\cap \Omega}  \frac{\phi_{c_k,+}(x,\tau)}{ (\tau-t)^{1/2} }\, \D \tau\\
&= \frac{1}{\Gamma(1/2)}
\int_{\tau \in [t,\infty]\cap \Omega} \frac{ \mathbf{1}_{ y >  {c_k}+\tau}(x) }
{(x-{c_k}-\tau)^{1/2}(\tau -t)^{1/2}} \, \D \tau\\ 
&=\begin{cases} 0  &\text{  if }    c_k +t  \geq x,   \\  
\frac{1}{\Gamma(1/2)} \int_t^{x-{c_k}} \Big( \frac{\mathbf{1}_\Omega(\tau)}{(\tau-t)(x-{c_k}-\tau)} \Big)^{1/2} \, \D \tau 
&\text{  if }  {c_k}+t<x \, .   \end{cases}
\end{align*} 
If $c_k+t <x$, making the substitution $\tau = t+(x-c_k-t)u$, we get
\begin{align*} 
(I^{1/2,\Omega}_{-,t}\phi_{c_k,+})(x,t)&= \frac{1}{\Gamma(1/2)}
\int_0^1 \frac{\mathbf{1}_\Omega(t+(x-c_k-t)u)}{u^{1/2} (1-u)^{1/2}} \, \D u \, .
\end{align*}
Recalling \eqref{p14}, the function $A^{\Omega}_{c_k,-}(x,t):=(I^{1/2,t}_{\Omega, -}\phi_{c_k,+})(x,t)-\sqrt \pi
\mathbf{1}_{x>c_k+t}$ vanishes for $c_k+t \ge x$, while
for $x> c_k+t$ we have,  
\begin{align*} 
-\Gamma(1/2) \cdot A^{\Omega}_{c_k,-}(x,t)&=
\int_0^1 \frac{\mathbf{1}_{\Omega^c} (t+(x-c_k-t)u)}{ u^{1/2} (1-u)^{1/2}} \, \D u 
\, .
\end{align*}
Next, we show  that 
$A^{\Omega}_{c_k,-}(x,t)$ is $\eta$ H\"older for all $\eta <1/2$ at $c_k+t$.
Fixing $q\in (2,1/\eta)$  and $\tilde q<2$
such that $1/q+1/\tilde q=1$, first note that  $\|   u^{-1/2} (1-u)^{-1/2} \|_{L^{\tilde q}([0,1])}<\infty$.
Then, setting
 $\epsilon=x-c_k-t>0$,  we have
$$
-\Gamma(1/2) \cdot A^{\Omega}_{c_k,-}(x,t)\le 
\| \mathbf{1}_{\Omega^c} (t+ \epsilon u)\|_{L_q([0,1])}
\cdot  \|   u^{-1/2} (1-u)^{-1/2}\|_{L^{\tilde q}([0,1])} \, ,
$$
by the Holder inequality. 
Next, using \eqref{tscond0}, we find for  any $\beta >2$,
\begin{align*} 
\| \mathbf{1}_{\Omega^c} (t+ \epsilon u)\|_{L_q([0,1])}
=\biggl (\int_0^\epsilon \frac{\mathbf{1}_{\Omega^c} (t+ v)}{\epsilon} \, \D v\biggr)^{1/q}
\le C_\beta \cdot \epsilon^{(\beta-1)/q}\, .
\end{align*}
By taking $\beta <2$ close enough to $2$ we may
ensure $(\beta-1)/q\ge \eta$.  Recalling that $x=\epsilon+c_k+t$, this proves that $A^{\Omega}_{c_k,-}(x,t)$ is $\eta$ H\"older  at $c_k+t$.
		
Finally, we prove that for any $\eta <1/2$ there exists $C_\eta <\infty$
such that
$$
|A^{\Omega}_{c_k,-}(x_2,t)- A^{\Omega}_{c_k,-}(x_1,t)|
\le C_\eta |x_2-x_1|^\eta \, , \quad \forall x_i > c_k+t\, , \, \, i=1, 2\, .
$$
For this, assuming without loss of generality that $x_1<x_2$, we have
\begin{align} 
\nonumber &\Gamma(1/2) |A^{\Omega}_{c_k,-}(x_2,t)- A^{\Omega}_{c_k,-}(x_1,t)|\\
\nonumber  &\quad=
\int_{t}^{x_2-c_k}
\frac{ \mathbf{1}_{\Omega}(\tau) }
{\sqrt{x_2-c_k-\tau} \sqrt{\tau -t}} \, \D \tau-
\int_{t}^{x_1-c_k}
\frac{ \mathbf{1}_{\Omega}(\tau) }
{\sqrt{x_1-c_k-\tau}\sqrt{\tau -t}} \, \D \tau\\
\label{rhs1} &\quad= \int_{x_1-c_k}^{x_2-c_k}
\frac{ \mathbf{1}_{\Omega}(\tau) }
{\sqrt{x_2-{c_k}-\tau}\sqrt{\tau -t}} \, \D \tau \\
\label{rhs2} &\qquad\qquad +
\int_{t}^{x_1-c_k}
\biggl ( \frac{ \mathbf{1}_{\Omega}(\tau) }
{\sqrt{x_2-{c_k}-\tau}\sqrt{\tau -t}}
-   \frac{ \mathbf{1}_{\Omega}(\tau) }
{\sqrt{x_1-{c_k}-\tau}\sqrt{\tau -t}} \biggr )  \, \D \tau \, .
\end{align}
Using the change of variable $\tau=(x_1-c_k) +(x_2-x_1) u$,
with $\D \tau = (x_2-x_1) \D u$, for the 
integral in \eqref{rhs1}, and the H\"older inequality 
for $1/\tilde q+1/q=1$ with $1<\tilde q<2$ and $2<q<1/\eta$, we find
\begin{align*} 
\int_{x_1-c_k}^{x_2-c_k}&
\frac{ \mathbf{1}_{\Omega}(\tau) }
{\sqrt{x_2-{c_k}-\tau}\sqrt{\tau -t}} \, \D \tau\\
&\le  \Biggl (  \int_{x_1-c_k}^{x_2-c_k}  (\mathbf{1}_{\Omega}(\tau) )^q \, \D \tau \Biggr ) ^{1/q}
\Biggl (   \int_{x_1-c_k}^{x_2-c_k}
\biggl (       \frac{ 1 }
{\sqrt{x_2-{c_k}-\tau}\sqrt{\tau -t}}      \biggr ) ^{\tilde q} 
\, \D u \Biggr ) ^{1/\tilde q}\\
&\le |x_1-x_2|^{1/q}
\Biggl (  \int_0^1 \biggl (\frac{1}
{ \sqrt {u+ (x_1-c_k-t)/(x_2-x_1)} \sqrt{1-u}}\biggr ) ^{\tilde q} \, \D u
\Biggr ) ^{1/\tilde q}\\
&\le 
|x_1-x_2|^{1/q}
\Biggl (  \int_0^1 \biggl (\frac{1}
{ \sqrt {u} \sqrt{1-u}} \, \D u\biggr ) ^{\tilde q} \Biggr ) ^{1/\tilde q}\le C_\eta |x_1-x_2|^{\eta} \, .
\end{align*}
It remains to estimate \eqref{rhs2}. We rewrite the integral as 
\begin{align*}
		\int_{t}^{x_1-c_k}
		\frac{ \mathbf{1}_\Omega(\tau) }  {\sqrt{\tau -t}}
		\frac{1}{\sqrt{x_1-c_k-t}}
		\biggl (   \frac{ \sqrt{x_1-c_k -\tau} }  {\sqrt{x_2-c_k-\tau}} -1  \biggr )  \, \D \tau\, .
\end{align*} 
Now, for any $1<\tilde q<2$, we have
$$     
		\frac{1}{\sqrt{\tau -t}\sqrt{x_1-c_k-t}}\in L^{\tilde q}([t,x_1-c_k])\, , 
		\mbox{ uniformly in } x_1 > c_k + t\, .
$$
So, by the H\"older inequality, setting $w=x_1-c_k$ and $\delta=x_2-x_1$, it suffices to take $2<q<1/\eta$, and $\tilde q<2$ with $1/\tilde q+1/q=1$, and estimate
\begin{align*}
		\int_t^{w}
		\biggl (  \sqrt{1- \frac{\delta}{w+\delta-\tau}} -1   \biggr )^q&  \, \D \tau
		\le
		\int_t^{w}
		\biggl ( {\frac{\delta}{w+\delta-\tau}} \biggr )^{q/2}
		\, \D \tau\\
		&  \le  \delta^{q/2}  \int_t^{w}
		\biggl ( {\frac{1}{w+\delta-\tau}} \biggr )^{q/2}   \, \D \tau\\
		&= \delta^{q/2}  \cdot \frac{1}{1-q/2}
		(w+\delta-\tau)^{1-q/2} \bigg |^w_{\tau=t }\, .
		\end{align*} 
		Finally, if $2<q<1/\eta$, we have
		$
		\bigl ( \delta^{q/2}\delta^{1-q/2} \bigr )^{1/q}\le  \delta^{1/q} \le |x_1-x_2|^\eta
		$.
\end{proof}

\subsection{The semifreddo function.  Proposition \ref{natomega}}\label{five}
	
Let $g$ be a $\gamma$-H\"older function defined on a closed
subset $\Omega\subset \real$ of positive Lebesgue measure.
Then, for any $\eta <\gamma$, by analogy with the notion of the derivative in the
sense of Whitney, we define\footnote{This definition is meaningful if $x$ is a point of $\Omega$ with nonzero Lebesgue
density, see also Proposition~\ref{natomega}. In our setting,
we may use the stronger
condition \eqref{tscond0}.} the left-sided {\it Whitney--Marchaud derivative of $g$}
on $\Omega$  to be
\begin{align*}
	(M^{\eta,\Omega}_{+} g) (x)
	&=\frac{\eta}{ \Gamma(1-\eta)}
	\int_{\Omega \cap (-\infty,x]}
	\frac {g(x)-g(y)}{(x-y)^{1+ \eta}}  \, \D y\\
	\nonumber &=\frac{\eta}{ \Gamma(1-\eta)}
	\int_{\Omega -x \cap (-\infty,0]}
	\frac {g(x)-g(x+\tau)}{|\tau|^{1+ \eta}}  \, \D \tau\, .
\end{align*}
We then define 
\begin{align*}
	(M^{\eta,\Omega}_{-} g) (x)
	&=\frac{\eta}{ \Gamma(1-\eta)}
	\int_{\Omega -x \cap [0,\infty)}
	\frac {g(x)-g(x+\tau)}{\tau^{1+ \eta}}  \, \D \tau\, .
\end{align*}
and 
\begin{align*}
	(M^{\eta,\Omega} g) (x)&=
\frac{\eta}{2\Gamma(1-\eta)}
	\int_{\Omega -x }
	\frac {g(x+\tau)-g(x)}{|\tau|^{1+ \eta}} \sgn(\tau) \, \D \tau\, .
\end{align*}

\begin{remark}[Boundary of $\Omega$]\label{wildd} Beware that integration by parts with respect
to the variable $t$ is problematic for $M^{\eta,\Omega}$
since $\partial \Omega$ is wild in our application.  (In particular, the analogue of   Proposition \ref{CC2} 
is not obvious.)
\end{remark}

\begin{remark}In view of Conjecture~\ref{laconj}, it is desirable to  prove  versions of 
Lemma~\ref{AbelMarchaud} (as well as Lemmas~\ref{AbelMarchaud2}
and ~\ref{AbelMarchaud3})
for $M^{1/2,\Omega}$, if
$\Omega$ is compact and satisfies Tsujii's condition \eqref{tscond0} for suitable $\beta$.
Although	  it seems possible to bypass the (problematic) integration by parts in $\tau$ in the proof of Lemma~\ref{AbelMarchaud} by using instead
an infinite Taylor series for $g(\tau)$, we refrain from including this analysis  here.
\end{remark}

We define yet another susceptibility function:  
\begin{definition}[Semifreddo fractional susceptibility function]
 The
 {\it semifredddo 
fractional susceptibility
function} at $t\in \TSR$ and along $\Omega$ is  the following formal power series
\begin{align*}
&\Psi^{\Omega, \mathrm{sf}}_\phi(\eta,z)
:=   
 \sum_{k=0}^\infty  z^k \int
 (\phi \circ f_{t}^k) (x) 
(M^{\eta,\Omega}_s\LL_s\rho_{t}(x)|_{s=t}) \,
\D x\, .
\end{align*}
\end{definition}
This function lies ``between'' 
$\Psi^\mathrm{fr}_\phi(\eta,z)$ and
$\Psi^\Omega_\phi(\eta,z)$ since $\Psi^{\Omega, \mathrm{sf}}_\phi(\eta,z)$ is
\begin{align*}
 \frac{\eta }{2\Gamma(1-\eta)}\sum_{k=0}^\infty  z^k \int  
 (\phi \circ f_{t}^k) (x) 
\int_{\real \cap (\Omega-t)} 
\frac{\bigl ((\LL_{t+ \delta} -\LL_{t})  \rho_{t}\bigr )(x)}{|\delta|^{1+\eta}} \sgn(\delta) \, \D \delta \,
\D x\, .
\end{align*}

The results in this section together with Theorem~\ref{WTF} motivate the statement
 on $\Psi^{\Omega,\mathrm{sf}}(\eta, z)$ in Remark~\ref{conjC}.
(In addition, we expect that Proposition~\ref{natomega} should allow to prove that
$
\lim_{\eta \uparrow 1} \Psi^{\Omega,\mathrm {fr}}_\phi(\eta, z)=\Psi_\phi(z) 
$, as formal power series.)

\smallskip

The following notion of differentiability seems to be relevant in our context:

\begin{definition}[$\Omega$-Whitney differentiability]\label{relevant}
Let $g$ be a  function defined on a closed
subset $\Omega\subset \real$.
We say that $g$ is $\Omega$-Whitney differentiable at $t\in \Omega$
if there exists  $g'_\Omega(t)\in \complex$ such that 
$$
	\lim_{\delta \to 0, t+\delta \in \Omega}
	\frac{g(t+\delta)-g(t)}{\delta}=g'_\Omega(t) \, .
$$
For $\zeta\in (0,1)$, we say that $g$ is $\Omega$-Whitney $\zeta$-differentiable at $t\in \Omega$
if there exists $g^{\zeta}_\Omega(t)\in\complex$, 
such that 
\begin{equation*}
	\lim_{\delta \to 0, t+\delta \in \Omega}
	\frac{g(t+\delta)-g(t)}{\sgn(\delta) {|\delta|^\zeta}} =g^{\zeta}_\Omega(t) \, .
\end{equation*}
\end{definition}

The following proposition shows that  $M^{\eta,\Omega}$
is naturally related to  $\Omega$-Whitney differentiability for large enough sets $\Omega$:
	
\begin{customprop}{F}[$M^{\eta,\Omega}$ and $\Omega$-Whitney differentiability]\label{natomega}
Let $\Omega\subset \real$ satisfy Tsujii's condition \eqref{tscond0} for some $\beta >1$.
Then for any bounded function $g: \real \to \complex$ which is $\Omega$-Whitney differentiable at 
$t\in \Omega$, we have
$
\lim_{\eta \uparrow 1}
(M^{\eta,\Omega} g) (t)=g'_\Omega(t)	
$.

Moreover,  for any $\zeta\in (0,1)$
and any bounded  $g: \real \to \real$ which is $\Omega$-Whitney $\zeta$-differentiable at 
$t\in \Omega$, we have,
$$
		\lim_{\eta \uparrow \zeta}
		\biggl ( \frac{\Gamma(1-\eta)}{\zeta\cdot  \Gamma(\zeta-\eta)}	(M^{\eta,\Omega} g) (t) \biggr ) =g^{\zeta}_\Omega(t)
		\, .
$$
\end{customprop}
	
\begin{proof}[Proof of Proposition~ \ref{natomega}]
Let us assume to fix ideas that $t=0$ and $g'_\Omega(0)\ge 0$.
		
We first prove that $\lim_{\eta \uparrow 1} M_+^{\eta,\Omega} g (0) = g_\Omega'(0)$
by showing that, for any $\epsilon >0$, we have
$	(1-\epsilon)  (g'(0) - \epsilon )\leq \liminf_{\eta \uparrow 1} M_+^{\eta,\Omega} g (0)	\leq \limsup_{\eta \uparrow 1} M_+^{\eta,\Omega} g (0) \leq g_\Omega'(0)+	\epsilon$.
		
First, since  $g'_\Omega(0) = \lim_{h \to 0, h \in \Omega} \frac{g(h) - g(0)}{h}$, we can find  $\delta > 0$ such that
\[
		- \epsilon \leq \frac{g(h) - g(0)}{h} - g'_\Omega(0) \leq \epsilon
		\, , \, \, \, \, \forall h \in \Omega\, , \,\, |h| \leq \delta \, .
\]
Then, since $0<\eta <1$, we can  write
\begin{align*}
		M_+^{\eta,\Omega} g (0) 
		&=
		\frac{\eta}{\Gamma (1-\eta)} \biggl( \int_{\Omega \cap [0,\delta]} \frac{g (0)
			- g(-t)}{t} \frac{1}{t^\eta} \, \D t +
		\int_{\Omega \cap [\delta,\infty)} \frac{g (0) - g(-t)}{t^{1+\eta}} \,
		\D t \biggr) \\ 
		&\leq \frac{\eta}{\Gamma (1-\eta)}
		\biggl( (g'_\Omega(0) + \epsilon) \int_0^\delta \frac{1}{t^\eta} \, \D t +
		\int_\delta^\infty \frac{2\sup|g|}{t^{1+\eta}} \,\D t \biggr) \\ & \leq
		\frac{\eta}{(1-\eta) \Gamma(1-\eta)}
		(g'_\Omega(0) + \epsilon)  \delta^{1-\eta}+  \frac{2}{\Gamma(1-\eta)} \sup|g|\, 
		\delta^{-\eta} \, .
\end{align*}
Using that 
\begin{equation}\label{change2}
		\begin{split}
		(1-\eta) \Gamma(1-\eta)=\Gamma(2-\eta) \mbox{ with }
		\Gamma (1) = 1\, ,  &\\\qquad \qquad \mbox{ while }
		\lim_{\eta \uparrow 1} \Gamma(1-\eta) =
		\infty 
		\mbox{ and } \lim_{\eta \uparrow 1} \delta^{1-\eta} = 1
		\, ,&
		\end{split}
\end{equation}
we find
	$\limsup_{\eta \uparrow 1} M_+^{\eta,\Omega} g (0) \leq g'_\Omega(0) + \epsilon$.

The proof that $\liminf_{\eta \uparrow 1} M_+^\eta g (0) \geq( g'_\Omega(0) - \epsilon )(1-\epsilon)$ is a bit trickier and will use \eqref{tscond0} for $\beta >1$.
By the above computation for the limsup, taking $\eta$ close enough to $1$
for fixed $\delta$ 
it suffices to show that
\begin{equation}\label{enuf}
		\liminf_{\eta \uparrow 1}
		\frac{\eta}{\Gamma (1-\eta)} \int_{\Omega \cap [0,\delta]} \frac{g (0)
			- g(-t)}{t^{1+\eta}}  \, \D t \ge g'_\Omega(0) - \epsilon \, .
\end{equation}
Since   
\begin{align*}
		\frac{\eta}{\Gamma (1-\eta)}  \int_{\Omega \cap [0,\delta]} \frac{g (0)
			- g(-t)}{t} \frac{1}{t^\eta} \, \D t 
		&\geq \frac{\eta}{\Gamma (1-\eta)} \int_{\Omega \cap [0,\delta]} 
		(g'_\Omega(0) - \epsilon) \frac{1}{t^\eta} \, \D t \, ,
\end{align*}
using again $(1-\eta) \Gamma(1-\eta)=\Gamma(2-\eta)$, the bound \eqref{enuf} 
reduces to
$$  
		\liminf_{\eta \uparrow 1}  \int_{\Omega \cap [0,\delta]}  \frac{1-\eta}{t^\eta} \, \D t \ge  1\, .
$$
We already know	that $\lim_{\eta \uparrow 1}  \int_{ [0,\delta]}  \frac{1-\eta}{t^\eta} \, \D t=\lim_{\eta \uparrow 1}\delta^{1-\eta} =1$, so it suffices to show
\begin{equation*}
		\lim_{\eta \uparrow 1}  \int_{\Omega^c \cap [0,\delta]}  \frac{1-\eta}{t^\eta} \, \D t =0 \, .
\end{equation*}
This will follow from the fact that
\begin{equation*}
		\lim_{\eta \uparrow 1}  \int_{\Omega^c \cap [0,\delta]}  \frac{1}{t^\eta} \, \D t <\infty  \, .
\end{equation*}
The above bound, i.e. uniform  Lebesgue integrability
of $F_\eta(\tau)=\mathbf{1}_{\Omega^c \cap [0,\delta]}(\tau)\cdot  \tau^{-\eta}$
as $\eta \uparrow  1$, will
follow from \eqref{tscond0} for $\beta >1$ at $t=0$.
Indeed,  for any fixed $1<\beta <2$, there exists $C_\beta<\infty$ such that
for all $t>\delta^{-\eta}$ we have
\begin{equation*}
		m\{\tau \in \Omega^c \cap [0,\delta]\mid \tau^{-\eta} >t \}
		=m\{\tau \in \Omega^c \cap [0,\delta]\mid \tau <t^{-1/\eta} \}
		\le C_\beta t^{-\beta/\eta}\, .
\end{equation*}
Observe next that 
$$\limsup_{\eta \uparrow 1} \int_{\delta^{-\eta}}^\infty C_\beta t^{-\beta/\eta} \, \D t <\infty
\mbox{ if } \beta >1\, ,
\mbox{ and }
\limsup_{\eta \uparrow 1} \delta \int_0^{\delta^{-\eta}}  \, \D t= 1
\, .
$$
To conclude, just apply the characterisation of 
Lebesgue integrability of a  nonnegative measurable function $F$ given
by $\int_0^\infty m(\tau\mid F(\tau) >t) \, \D t <\infty$, see e.g. \cite[\S1.5, p.~14, (2)]{LL}, and use $t \mapsto g(-t)$ to get 
$\lim_{\eta \uparrow 1} M_-^{\eta, \Omega} g (0) = - g'_\Omega(0)$.

The second claim of Proposition~\ref{natomega} follows from the
same 
replacing \eqref{change2}  by
$(\zeta-\eta) \Gamma(\zeta-\eta)=\Gamma(1+\zeta-\eta)$, with 
$\Gamma (1) = 1$, while 
$
\lim_{\eta \uparrow \zeta} \Gamma(\zeta-\eta) =\infty$
 and  $\lim_{\eta \uparrow \zeta} \delta^{\zeta-\eta} = 1$.
\end{proof}
% % % % % % % % % % % % % % % % % % % % % % % % % % % % % % % % % % % % %ù

\begin{appendix}

\section{Proof of Lemma~\ref{halff}. (Abel's remark, two-sided)}\label{Ahalff}

Recall that if $|x|<1$ then
\begin{equation}\label{Taylor}
\sqrt{1+x}=1+\sum_{n=1}^\infty b_n x^n \mbox{ with } b_n=\frac{(1/2)(-1/2)\cdots (1/2-n+1) }{n!} \, .
\end{equation}
(In particular $b_1=1/2$ and $b_2=-1/8$.)
We shall also use the fact \cite[195.01]{McMillan} that
for any real numbers $a>\tau$ and $b>\tau$
\begin{equation}\label{195.01}
\partial_\tau  \log \bigl ( |\sqrt{a-\tau} -\sqrt{b-\tau}|\bigr )
=\frac 1 2 \frac{1}{\sqrt{a-\tau}\sqrt{b-\tau}} \, .
\end{equation}

\begin{proof}[Proof of Lemma~\ref{halff}]
Set $x_0=x-c_k$. To prove the claim on $I^{1/2}_{+,t} (\phi_{c_k,+,\EE})(x,t)$, it is enough to show that, for any $c_k+t-\EE<x<c_k+t+\EE$, 
\begin{equation}\label{abovecl}
\sqrt{\pi}\cdot I^{1/2}_{+,t }(\phi_{c_k,+,\EE})(x,t)= 
-
 \log |x_0-t| + \log \EE+G_\EE(t-x_0)\,  ,
\end{equation}
where, for $y \in (-\EE/2,\EE/2)$,
$$
 G_\EE(y)=-2\log H_\EE(y)\, ,
\quad H_\EE(y)=\frac{\sqrt{1+\frac{y}{\EE}}-1}{y/{\EE}}>0\, .
$$
Indeed,  using \eqref{Taylor}, we have (the power series below are absolutely convergent)
\begin{align*}
& H_\EE(y)=1/2+
\sum_{j=1}^\infty b_{j+1} \left(\frac{y}{\EE}\right)^{j}
\, , \,\, H_\EE(0)=1/2\, ,\\
&\partial_y G_\EE(y)=-\frac{2}{H_\EE(y)}\cdot
\sum_{j=1}^\infty j \cdot b_{j+1} \frac{y^{j-1}}{\EE^{j}} 
\, ,
\\
&\partial^2_y G_\EE(y)=\frac{2}{(H_\EE(y))^2}\cdot
\sum_{j=1}^\infty j \cdot  b_{j+1} \frac{y^{j-1}}{\EE^{j}} 
-\frac{2}{H_\EE(y)}\cdot
\sum_{j=2}^\infty j (j-1)  b_{j+1} \frac{y^{j-2}}{\EE^{j}} 
\, .
\end{align*}
In particular, $\lim_{\EE\to \infty}\sup_{ y\in (-\EE/2,\EE/2)}
| \partial_y G_\EE(y)|=0$, and
$$
\sup_{\EE} \,\, \sup_{ y\in (-\EE/2,\EE/2)}
\max\{|  G_\EE(y)|,
| \partial_y G_\EE(y)|, 
| \partial^2_y G_\EE(y)|\} < \infty\, .
$$

\smallskip
We proceed to show \eqref{abovecl}.
From the definition \eqref{RLhalf} of $I^{1/2}_{+,t}$, we get
\begin{align*}
\sqrt{\pi} I^{1/2}_{+,t} \phi_{c_k,+,\EE} (x,t)
&=\int_{-\infty}^t\frac{\phi_{c_k,+,\EE}(x,\tau)}{\sqrt{t-\tau}} 
\, \D \tau
\\
\nonumber &=
\int_{x-c_k-\EE}^{\min(t,x-c_k)} \frac{1}{\sqrt{x-c_k-\tau} }\frac{1}{\sqrt{t-\tau}}\, \D \tau \, .
\end{align*}
Recalling that $x_0=x-c_k$, if $x<  c_k+t$, then  we find
\begin{equation*}
\sqrt{\pi} I^{1/2}_{+,t} \phi_{c_k,+,\EE} (x,t)=\int_{x_0-\EE}^{x_0} \frac{1}{\sqrt{x_0-\tau}} \frac{1}{\sqrt{t-\tau}}\, \D \tau \, .
\end{equation*}
(This term did not appear in Lemma~\ref{halff+}.)
Using \eqref{195.01}, we find
\begin{align*}
\int_{x_0-\EE}^{x_0}& \frac{1}{\sqrt{x_0-\tau}\sqrt{t-\tau}}\, \D \tau
=2\log( |\sqrt{x_0-\tau}-\sqrt{t-\tau}|) \Bigr |^{x_0}_{x_0-\EE}\\
&= \log (t-x_0) -2 \log (\sqrt{t-x_0+\EE}-\sqrt\EE) \, .
\end{align*}
If in addition $c_k+t-\EE<x$, then, by \eqref{Taylor} we find
\begin{align}
 \nonumber-2 &\log (\sqrt{t-x_0+\EE}-\sqrt\EE)
=
- 2 \log\bigl (\sqrt \EE \biggl (\sqrt{1+\frac{t-x_0}{\EE}}-1 \biggr ) \bigr )
\\
\nonumber &=
 - 2 \log\biggl ( \frac{t-x_0}{\sqrt \EE}
\sum_{n=1}^\infty b_n \left(\frac{t-x_0}{\EE}\right)^{n-1}\biggr )\\
 \label{similar}&=
+\log \EE - 2 \log (t-x_0)-2\log\biggl (1/2+
\sum_{j=1}^\infty b_{j+1} \left(\frac{t-x_0}{\EE}\right)^{j}\biggr )\, ,
\end{align}
and we have shown that if $c_k+t-\EE<x<c_k+t$ then
\begin{align}\label{one}
  \sqrt{\pi} I^{1/2}_{+, t} &\phi_{c_k,+,\EE} (x,t)\\
\nonumber &=-\log(t-x_0)
+\log \EE-2\log\biggl (\frac{1}{2}+
\sum_{j=1}^\infty b_{j+1} \left(\frac{t-x_0}{\EE}\right)^{j}\biggr )
\, .
\end{align}

We now consider the case $c_k+t<x<c_k+t+\EE$. Then
\begin{equation*}
\sqrt{\pi} I^{1/2}_{+,t} \phi_{c_k,+,\EE} (x,t)=
\int_{x_0-\EE}^{t} \frac{1}{\sqrt{x_0-\tau}} \frac{1}{\sqrt{t-\tau}}\, \D \tau \, .
\end{equation*}
Using again 
\eqref{195.01},
and we find 
\begin{align*}
\int_{x_0-\EE}^{t} \frac{1}{\sqrt{x_0-\tau}} \frac{1}{\sqrt{t-\tau}}\, \D \tau&=
\log (x_0-t) -2 \log (\sqrt \EE-\sqrt{t-x_0+\EE}) \, .
\end{align*}
Similarly as for \eqref{similar}, we find, using \eqref{Taylor},
\begin{align*}
-2 &\log (\sqrt\EE-\sqrt{t-x_0+\EE})=
- 2 \log\bigl (\sqrt \EE \biggl (1-\sqrt{1+\frac{t-x_0}{\EE}} \biggr ) \bigr )
\\
&\quad=
- 2 \log\bigl (\frac{x_0-t}{\sqrt \EE } \sum_{n=1}^\infty b_n \biggl (\frac{t-x_0}{\EE} \biggr )^{n-1}  \bigr )
\\
&\quad=+\log \EE - 2 \log (x_0-t)- 2 \log\biggl (\frac{1}{2}-\sum_{j=1}^\infty  b_{j+1} \left (\frac{t-x_0}{\EE}\right )^j\biggr )\, .
\end{align*}
We have thus shown that if $c_k+t-\EE<x<c_k+t+\EE$ then
\begin{align*}
  \sqrt{\pi}\cdot & I^{1/2}_{+,t} \phi_{c_k,+,\EE} (x,t)=
-\log|x_0-t|
+\log \EE
- 2 \log\biggl (  \frac{1}{2}+\sum_{j=1}^\infty  b_{j+1} \left (\frac{t-x_0}{\EE}\right )^j\biggr )\, .
\end{align*}
With \eqref{one}, the above identity shows the claim on the
right-handed spike ($\sigma=+$).

For the left-handed spike, we have, recalling $x_0=x-c_k$
and \eqref{QQ+},
$$
\phi_{c_k,-,\EE} (x,t)= 
\phi_{c_k,+,\EE} (x,2x_0-t)= Q \circ T_{2 x_0} (\phi_{c_k,+,\EE} ) (x,t)\, .
$$
Thus,  we find
\begin{align*}
 I^{1/2}_{-,t} \phi_{c_k,-,\EE} (x,t)
&= I^{1/2}_{-,t} \circ Q \circ T_{2 x_0} (\phi_{c_k,+,\EE} ) (x,t)\\
&=  I^{1/2}_{+,t} \circ T_{2 x_0}(\phi_{c_k,+,\EE}) (x,-t)
= 
I^{1/2}_{+,t} \phi_{c_k,+,\EE} (x,-t+2x_0) \, .
\end{align*}
Finally, note that $-(x-c_k-t)=x-c_k-(2x_0-t)$.
\end{proof}

\section{Vanishing of $X_t$ at the image of endpoints}
\label{vanX}
It is sometimes convenient to assume that $X_t$ vanishes at the
endpoints $\pm 1$. This can be achieved in several ways, as we explain now.
For $t\in (1,2)$,  setting 
$$\tilde f_t(y)=\frac{f_t(|a_t|y)}{|a_t|}=\frac{t}{|a_t|}-|a_t| y^2=
a_t y^2-\frac{t}{a_t}\, ,
$$
gives a family of maps $\tilde f_t$ preserving  $[-1,1]$, with  $c_{0,t}=0$, and such that
$$\tilde f_t(-1)=-1=\tilde f_t(1)\, , \,\,  \forall\,  t\in (-1,2)\, ,
$$
so that $\partial_t \tilde f_t$ vanishes at the
endpoints $-1$ and $1$. This is a transversal family of quadratic maps in the sense
of  Tsujii \cite{Tsu}, or \cite{AM2, ALM}. The formula for $\partial \tilde f_t$
being unwieldy,  it is convenient
to work with the family $\tilde h_t:[0,1]\to [0,1]$ defined by
$\tilde h_t(x)=t x(1-x)$, $t \in (1,4]$.
The critical point of each $\tilde h_t$ is $1/2$, and $X_t(x)=\partial_t \tilde h_t 
\circ \tilde h_t^{-1}=tx$ (there is a typo in \cite[eq. (2)]{BBS} where it is
stated incorrectly that $X^{\tilde h_t}_t(x)\equiv t$). Then
$\tilde h_t(0)=\tilde h_t(1)=0$ for all $t$, so that $\partial_t \tilde h_t$  vanishes at the
endpoints $0$ and $1$. A variant of $\tilde h_t$ 
is $h_t(x)=t(1-x^2)-1$ for $t\in (1,2]$ on $[-1, 1]$ (there, $c=0$ and $h_t(-1)=h_t(1)=-1$).
However, the formulas for $f_t$ are easier to manipulate than those of $\tilde f_t$, $h_t$, or $\tilde h_t$, compensating for the non vanishing 
of the vector field at the endpoints of a common invariant
interval. In addition\footnote{See \cite[Lemma 2.1]{Tsu} for
an analogous remark.}, for any fixed $t_0 \in (1,2)$ 
and  all $t$ close enough to $t_0$, we may extend $f_t$ on $[-2,  2]$
to a $C^4$ map, also called $f_t$,  with negative Schwarzian derivative,  such that 
$Df_t$ is positive on $[-2, t-t^2]$ and
negative on $[t, 2]$, with $f_t(-2)=f_t(2)=-2$ and  $f_t -  f_{t_0}=
O(|t-t_0|)$. (The extended family $f_t$ is not needed in the present paper, but
we expect it should be useful to prove
equality [ii] in Conjecture~\ref{laconj} in future works.)
Finally, using that $c_{2,t}=t-t^2>-|a_t|$, one can easily show that
for any $t_0\in (1,2)$ there are $\epsilon >0$ and an
interval $I'_{t_0}\subset (-2,2)$   such that $f^k_t (I'_{t_0})\subset I'_{t_0}$
for all $t \in (t_0-\epsilon, t_0+\epsilon)$. In other words, $f_t$ for $t \in (t_0-\epsilon, t_0+\epsilon)$ is  a transversal family
of unimodal maps on $I'_{t_0}$ in the sense of Tsujii \cite{Tsu}
since, recalling \eqref{eq.transversal}, if $\JJ({t_1})$ is absolutely convergent then $\JJ({t_1})\ne 0$.

\section{Averaging}\label{wormell}
For regular parameters $t$ (also called ``hyperbolic''), the physical measure $\mu_t^{sink}= P^{-1}
\sum_{j=1}^P \delta_{x_{t,j}}$ is atomic, supported
on an attracting periodic orbit $f^{P} (x_{1,t})=x_{1,t}$
with $P=P(t)$, and can be obtained as
$$
\lim_{k \to \infty} \sum_{j=0}^{P-1}\int \LL_t^{k+j}(\psi) \phi  \, \D m=
\lim_{k \to \infty} \sum_{j=0}^{P-1}\int \psi (\phi \circ f_t^{k+j}) \, \D m
= \int \psi  \, \D m  \cdot \frac{1}{P}
\sum_{j=1}^P \phi({x_{j,t}})
$$
The convergence is however not uniform in any interval
of regular parameters so one cannot a priori
sum over $k$ even if $\int_{I_t}\psi  \, \D m =0$. 
	
Since almost every parameter is either regular or
stochastic \cite{lyub} it is  natural to consider, for 
a $C^1$ observable, say, a Collet--Eckmann parameter $t$, and  $\epsilon >0$, the double
Lebesgue integral
$$
\AA_\epsilon(t):=\int_{[-\epsilon, \epsilon]} \int \phi(x) \D\mu_{t+\delta}(x)\,  \, \D \delta\, ,
$$
where $\mu_{t+\delta}=\mu_{t+\delta}^{sink}$ for regular parameters, and
$\mu_{t+\delta}=\rho_{t+\delta} dm$ is the SRB measure for stochastic parameters.
Then it is not hard to see, using Lebesgue differentiation,
that  the (ordinary) $t$-derivative $\AA_\epsilon'(t)$ exists for almost every $\epsilon >0$
and coincides with $\int \phi(x) \D\mu_{t+\epsilon}(x)$
(see	also \cite[\S3]{WG1} and  \cite[(16)]{WG2}).
This does not resolve the paradox described in the introduction, since the derivative
depends on $\epsilon$ and does not coincide with $\Psi_\phi(1)$ in general.
(Note that  the ``weakening of the linear response problem'' in the introduction
of \cite{BBS} --- existence and value of the limit as $\epsilon \to 0$
of the derivative $\AA'_\epsilon$ ---  does not
explain the paradox either.)

\section{Complements to the proof of Theorem~\ref{WTF}}
 
We record here interesting facts which  are
not needed for our proofs.

\begin{remark}[Spectrum on a pole-extended Banach space]
 For $t \in \MT$,
let $\Lambda=\Lambda_t:[-a_t,a_t]\to [0,1]$ be the absolutely continuous bijection
defined by $\Lambda_t(x)=\int_{-a_t}^x \rho_t(u) \D u$.
Then (see \cite{Og, Sch0}) the map $F_t:[0,1]\to [0,1]$ defined by $F_t  (\Lambda_t(x))=\Lambda_t (f_t(x))$ is Markov (for the partition $J_\ell$ defined
by the
endpoints $\Lambda_t(c_k)$, $k=0, \ldots L+P-1$), and $F_t$ is $C^1$
on the interior of each interval of monotonicity $J_\ell$,
with $\inf| F_t'|>1$. At the endpoints, we have\footnote{This is an  exercise left to the reader.}
\begin{equation}
\label{magicOg}
(D_t F_t)^k(\Lambda_t(c_{1,t})_\pm)=s_k \sqrt{ |(Df_t^k)(c_{1,t})|}
\end{equation}
(taking right or left-sided limits in the left-hand side according to the dynamical
orbit).
 In fact, $G_t:=1/F_t'$ extends to a  $C^1$ 
map on the closure of each $J_\ell$, with $\sup G''_t<\infty$.
On the Banach space $\BB_{\Lambda_t}$ of bounded functions $\phi$ on $[0,1]$
such that  each $\phi|_{\intt J_\ell}$ is $C^1$ and 
admits a $C^1$ extension to the closure of $J_\ell$, the  transfer operator $\LL^\Lambda_t \phi(y)=
\sum_{F_t(z)=y} \phi(z)/|F'_t(z)|$ thus has spectral radius equal
to one, with a simple eigenvalue at $1$, for the eigenvector
$\rho^\Lambda_t(y):=\rho_t(\Lambda_t^{-1}(y))$, and the rest of the spectrum is contained in a
disc of radius $\kappa$ strictly smaller than $1$.
Then $\BB_t=\{ \phi \circ \Lambda_t \, , \, \phi \in \BB_{\Lambda_t}\}$ is  a Banach space for the norm induced by $\BB_{t,\Lambda}$
and the operator $\LL_t$ on $\BB_t$ inherits the spectral properties of $\LL^\Lambda_t$ on $\BB_{\Lambda_t}$.
Any element of $\BB_t$ belongs to $L^1(dm)$, with $\int_{I_t} |\varphi|\,  \D m \le \|\varphi\|_{\BB_t}$.
Recall the notations $\YY_t$, $\chi_k$, $\chi(\vec Y)$, and $\MM_t$ from
Remark~\ref{notC}. Then  we claim that we may extend  $\LL_t:\BB_t\to \BB_t$  to a bounded operator $\mathbb L_t$
on the Banach space $\mathbb{B}_t:=\BB_t\oplus \YY_t$, whose
 nonzero spectrum 
 is the union of the $P$th roots of $\sgn ( Df^P_t (c_{L}))$ with the nonzero spectrum
of $\LL_t$ on $\BB_t$. Morever the following holds if $\sgn (Df^p(c_L))=+1$:
First,  
setting
$\MM^0_t(\vec Y)=\MM_t(\vec Y)-\rho_t \int_{I_t} \MM_t(\vec Y) \D m$, and letting
$\vec S_t$ be the fixed point of $\mathbb S_t$,
$$\psi^*_t:=(\id-\LL_t)^{-1}  (  \MM^0_t(\vec S_t))\in \BB_t\, , $$
and
the (rank-two) spectral 
projector $ \Pi_1$  for the eigenvalue $1$ of $\mathbb L_t$
satisfies
\begin{equation*}
   \Pi_1(\varphi, \chi(\vec Y))=\int \varphi \D m  \cdot \rho_t
+\frac{ \langle \vec S^*_t, \vec Y\rangle }{\langle \vec S^*_t, \vec S_t\rangle} \cdot (\psi^*_t+\chi(\vec S_t))\, .
\end{equation*}
Second, if $\int_{I_t}  \MM_t(\vec S_t) \, \D m=0$
  then  $\psi^*_t+\chi(\vec S_t) \in \mathbb{B}_t$
is a fixed point\footnote{In this case,  the eigenvalue $1$
has geometric multiplicity two, i.e. there is no Jordan block.} of $\mathbb L_t$, while
if $\int_{I_t}  \MM_t(\vec S_t) \, \D m\ne 0$ then 
there exists a non zero $\nu^N_t\in\mathbb{B}_t^*$ 
such that 
the (rank-one) nilpotent operator for
the eigenvalue $1$ of $\mathbb L_t$ satisfies 
$N_1^2=0$ and $\Pi_{\BB_t}\circ  N_1=\nu^N_t \cdot \rho_t$.

We justify the claims above: First,  there exists $\kappa <1$
such that, on $\BB_t$, 
$$
\LL_t^j(\varphi)=\int \varphi \, \D y \cdot \rho_t + \QQ^j_t(\varphi)\, , \,\, j \ge 1\, ,
$$
where for any $\epsilon>0$ there exists
$C$ such that $\|\QQ_t^j\|_{\BB_t}\le C (\kappa+\epsilon)^j$ for all $j\ge 1$.
Note that $\MM_t(\vec Y)=\Pi_{\BB_t} \bigl ( \mathbb L_t(\chi(\vec Y))\bigr )$.
Identifying $\chi(\vec Y)$ and $\vec Y$, we have  
\begin{equation*}
\mathbb L_t( \varphi, \chi(\vec Y))=\left (
\begin{array}{ccc}
1&0&  \rho_t\cdot \int \MM_t  \, \D m  \\
0&\QQ_t&\MM^0_t
\\
0&0&\mathbb S 
\end{array}
\right )
\left (
\begin{array}{c}
\rho_t \cdot \int \varphi \, \D m  \\
\varphi-\rho_t \cdot \int \varphi \,\D m \\\
\vec Y
\end{array}
\right )\, .
\end{equation*}
In particular 
if  $1/z$ does not belong to the spectrum
of $\LL_t$ or $\mathbb S_t$, then 
\begin{equation*}
\bigl (\id -z\mathbb L_t\bigr )^{-1}=\left (
\begin{array}{ccc}
\frac 1{1-z}&0&  -\frac{ \rho_t \int \MM_t(\id-z\mathbb S )^{-1} \,\D m}{1-z} \\
0&(\id-z\QQ_t)^{-1}&-(\id-z\QQ_t)^{-1}\MM^0_t(\id-z\mathbb S )^{-1}
\\
0&0&(\id-z\mathbb S )^{-1}
\end{array}
\right ) .
\end{equation*}
If $\int_{I_t} \MM_t (\vec S_t) \, \D m=0$
(with  $\vec S_t$ the fixed point of $\mathbb{S}_t$)  then
 a direct computation gives that $\mathbb L_t$
inherits a (second) fixed point $\psi^*_t+\chi(\vec S_t) \in \mathbb{B}_t$ from
the fixed point $\vec S_t$ of $\mathbb{S}_t$.
If $\int  \MM_t(\vec S_t)\,  \D m\ne 0$ then the eigenvalue $1$ of $\mathbb L_t$
has algebraic multiplicity two but geometric multiplicity one,
and the associated  nilpotent  $N_1$  satisfies
our claim.  In both cases, the claim on $\Pi_1$ follows.
\end{remark}

\begin{remark}\label{CEinfty}
In the Collet--Eckmann case with an infinite postcritical orbit, the finite
matrix $\mathbb S_t$ appearing in the proof of Theorem~\ref{WTF} will be replaced
by a shift to the right, also denoted $\mathbb S_t$, weighted by $s_{1,k}=\pm 1$,  acting on a space of infinite sequences (for example
$\ell^\infty(\integer_+)$). Then $\mathbb S_t$ does not have any eigenvalues,
and its spectrum is contained in the closed unit disc. Also, 
$\mathbb M_t(z):=(\id -z \mathbb S_t)^{-1}$ is the infinite matrix
with $(\mathbb M_t(z))_{j,j}=1$,  $(\mathbb M_t(z))_{\ell,j}=(-1)^{1+\ell-j}
z^{\ell-j} \prod_{k=\ell}^{j-1} s_{1,k} =(-1)^{1+\ell-j}
z^{\ell-j}  s_{j-1,\ell} $
for $j<\ell$,
and $(\mathbb M_t(z))_{\ell,j}=0$ for other $\ell$, $j$.
\end{remark}

\end{appendix}

\end{document}